%%%%%%%%%%%%%%%%%%%%%%%%%%%%%%%%%%%%%%%%%%%%%%%%%%%%%%%%%%%%%%%%%%%%%%%%%
%%% Latex template for IJM papers
%%%%%%%%%%%%%%%%%%%%%%%%%%%%%%%%%%%%%%%%%%%%%%%%%%%%%%%%%%%%%%%%%%%%%%%%%

%%%%%%%%%%%%%%%%%%%%%%%%%%%%%%%%%%%%%%%%%%%%%%%%%%%%%%%%%%%%%%%%%%%%%%%%%
%%% Documentclass and packages
%%%%%%%%%%%%%%%%%%%%%%%%%%%%%%%%%%%%%%%%%%%%%%%%%%%%%%%%%%%%%%%%%%%%%%%%%

% The draft option makes overfull boxes visible by placing a vertical
% bar in the margin; remove this option once the paper is in final
% form, with all overfull box errors fixed.

\documentclass{amsart}

%%%%%%%%%%%%%%%%%%%%%%%%%%%%%%%%%%%%%%%%%%%%%%%%%%%%%%%%%%%%%%%%%%%%%%%%%
% The following two commands set the page dimensions to match those of the
% IJM. These are the only commands in this template that are specific
% to the IJM.

\setlength{\textheight}{43pc}
\setlength{\textwidth}{28pc}

%%%%%%%%%%%%%%%%%%%%%%%%%%%%%%%%%%%%%%%%%%%%%%%%%%%%%%%%%%%%%%%%%%%%%%%%%

%% Latex packages:
%  Note that amsmath, amsfonts, and amsthm, are automatically loaded by
%  the amsart document class.

% the url pacakge allows easy typesetting of webpage references

% amssymb is frequently needed

\usepackage{amssymb}
\usepackage{url}

\usepackage{graphicx}

% Add here additional packages if needed. Common examples are
% graphicx, amscd. Avoid nonstandard packages.

%%%%%%%%%%%%%%%%%%%%%%%%%%%%%%%%%%%%%%%%%%%%%%%%%%%%%%%%%%%%%%%%%%%%%%%%%
%%% Theorem declarations
%%%%%%%%%%%%%%%%%%%%%%%%%%%%%%%%%%%%%%%%%%%%%%%%%%%%%%%%%%%%%%%%%%%%%%%%%

% Put theorem declarations here; group the declarations into those to
% be set in "plain" style (theorem, proposition, lemma, corollary),
% those set in "definition" (definition), and those set in "remark"
% style (example, remark), and precede each group by
% \theoremstyle{xxx}, with xxx being one of "plain", "definition", and
% "remark".

\newtheorem{theorem}{Theorem}[section]
\newtheorem{lemma}[theorem]{Lemma}
\newtheorem{corollary}[theorem]{Corollary}
\newtheorem{conjecture}[theorem]{Conjecture}

\newtheorem{notation}[theorem]{Notation}
\newtheorem{example}[theorem]{Example}
\newtheorem{definition}[theorem]{Definition}

%%%%%%%%%%%%%%%%%%%%%%%%%%%%%%%%%%%%%%%%%%%%%%%%%%%%%%%%%%%%%%%%%%%%%%%%%
% Macros
%%%%%%%%%%%%%%%%%%%%%%%%%%%%%%%%%%%%%%%%%%%%%%%%%%%%%%%%%%%%%%%%%%%%%%%%%

% insert author macros here
% avoid excessive use of macros
% definition of new symbols

\def\st {\ast}
\def\difst {\divideontimes}
\def\capst {\widehat{\cap}}
\def\lch {\triangleleft}
\def\rch {\triangleright}
\def\inch {\in}
\def\difch {\neq}

\def\totdif {\not\equiv}

%%%%%%%%%%%%%%%%%%%%%%%%%%%%%%%%%%%%%%%%%%%%%%%%%%%%%%%%%%%%%%%%%%%%%%%%%
%%%  Topmatter
%%%%%%%%%%%%%%%%%%%%%%%%%%%%%%%%%%%%%%%%%%%%%%%%%%%%%%%%%%%%%%%%%%%%%%%%%

\begin{document}

%%%%%%%%%%%%%%%%%%%%%%%%%
% Subject classification
%%%%%%%%%%%%%%%%%%%%%%%%%

% Provide an AMS subject classification with one or two primary classification
% numbers and, optionally, one or more secondary classification numbers.
% Use the following format:  "Primary 42B25. Secondary 42B60, 20E26"

\subjclass{03E47}

%%%%%%%%%
% Title
%%%%%%%%%

% Title, in lower case, with no explicit linebreaks (\\).  If the title
% is too long to be used as a running head, add a short version of the
% title in brackets, as in \title[shorttitle]{fulltitle}.

\title[$\sigma$-Set Theory]{$\sigma$-Set Theory: Introduction to the concepts of $\sigma$-antielement, $\sigma$-antiset and Integer Space.}

%%%%%%%%%%%%%%%%%%%%%%%%%%%%%%
% Author names and addresses
%%%%%%%%%%%%%%%%%%%%%%%%%%%%%%

% Provide one separate \author{...} \address{...} \email{....} entry for each
% author, i.e., do not combine multiple authors.  Separate address lines by double
% slashes.  Do not attach footnotes to author  names. (For acknowledgements use
% the "\thanks" construct below.)
%

\author{Ivan Gatica Araus}

\address{ Becario MAE-AECI}

\address{ Department of Mathematical Analysis, University of Sevilla, St.
Tarfia s/n, Sevilla, SPAIN}

\address{ Department of Mathematics, University Andr$\acute{e}$s Bello, Los Fresnos 52, Vi$\tilde{n}$a del mar, CHILE}

\email{igatica@us.es}

%%%%%%%%%%%%%%%%%%%%
% Acknowledgements
%%%%%%%%%%%%%%%%%%%

% Use \thanks for acknowledgements as footnotes to the title page.
% (Note that footnotes inside \author or \title macros are not
% allowed.)
%
% In case of multiple author papers, phrase the acknowledgement to
% say "The first author was supported by ...  The second author was
% supported by ..."

\thanks{During the development of this work, we had the initial support of Victor Cardenas Vera, Diego Lobos Maturana, Mariela Carvacho Bustamente and Hector Carrasco Altamirano, which motivated us to move forward. We also received help from Rafael Espinola Garcia, Carlos Jim$\acute{e}$nez G$\acute{o}$mez, Carlos Hernandez Linares, Andreea Sarafoleanu and Francisco Canto Martin in the correction and translation of the manuscript. Without this help we would not have been able to finish our work.}

%%%%%%%%%%%%%
% Abstract
%%%%%%%%%%%%%
%
% Abstracts should not contain macros (so that they can be processed independently
% of the paper.) Avoid displayed math and references in the abstract.

\begin{abstract}

In this paper we develop a theory called $\sigma$-Set Theory, in which we present an axiom system developed from the study of Set Theories of Zermelo-Fraenkel, Neumann-Bernays-Godel and Morse-Kelley. In $\sigma$-Set Theory, we present the proper existence of objects called $\sigma$-antielement, $\sigma$-antiset, natural numbers, antinatural numbers and generated $\sigma$-set by two $\sigma$-sets, from which we obtain, among other things, a commutative non-associative algebraic structure called Integer Space $3^{X}$, which corresponds to the algebraic completion of $2^{X}$.

\end{abstract}

\maketitle

%%%%%%%%%%%%%%%%%%%%%%%%%%%%%%%%%%%%%%%%%%%%%%%%%%%%%%%%%%%%%%%%%%%%%%%%%
% end Topmatter
%%%%%%%%%%%%%%%%%%%%%%%%%%%%%%%%%%%%%%%%%%%%%%%%%%%%%%%%%%%%%%%%%%%%%%%%%

%%%%%%%%%%%%%%%%%%%%%%%%%%%%%%%%%%%%%%%%%%%%%%%%%%%%%%%%%%%%%%%%%%%%%%%%%
% body of paper
%%%%%%%%%%%%%%%%%%%%%%%%%%%%%%%%%%%%%%%%%%%%%%%%%%%%%%%%%%%%%%%%%%%%%%%%%

\begin{section}{Introduction}

The $\sigma$-Set Theory comes from the study of multivalued operators $T:X\rightarrow 2^{X}$ defined in a Banach Space $X$ with images in
$2^{X}$. From here it seemed interesting to investigate the properties of this type of operators in the case that $2^{X}$ is
algebraically complete, i.e., when it is added the inverse elements for the union.

On the other hand, since the discovery of matter-antimatter duality, many mathematicians have been interested in translating this physical phenomenon into Set-Theoretic language. For this reason, we consider this goal in our work. Thus, we have the following scheme:
\begin{center}
\begin{tabular}{c|c|c}
Physic            & $\rightarrow$ & $\sigma$-Set Theory \\
\hline
particle          & $\rightarrow$ & $\sigma$-element \\
antiparticle      & $\rightarrow$ & $\sigma$-antielement \\	
matter            & $\rightarrow$ & $\sigma$-set \\
antimatter        & $\rightarrow$ & $\sigma$-antiset \\
$\xi$ energy      & $\rightarrow$ & $\emptyset$ emptyset.\\
\end{tabular}
\end{center}

In relation to the axiomatic model used in order to build our axiomatic system, we can observe that we first use the axioms of ZF Set Theory. However, after many tries to obtain a consistent axiom system, we realized that this model is not sufficient in order to give a proper existence to the concept of $\sigma$-antielement and $\sigma$-antiset. For this reason, we use the axioms system of NBG Set Theory and MK Set Theory. Then, the generic object of $\sigma$-Set Theory is called $\sigma$-class.

In the present article, we develop an individual analysis of the axioms and we present the results that we consider necessary for the creation of the $\sigma$-antielements and $\sigma$-antisets. Concerning the axiom system of $\sigma$-Set Theory we will have that: The axioms \ref{axiom empty set} (Empty $\sigma$-Set), \ref{axiom of extensionality} (Extensionality), \ref{axiom of creation of class} (Creation of $\sigma$-Class), \ref{axiom of replacement} (Replacement) and \ref{axiom of power set} (Power $\sigma$-Set) are analogous to some axioms of ST. Axioms \ref{axiom of pairs} (Pairs ) and \ref{axiom of fusion} (Fusion) are modifications of the axioms of ``pairs'' and ``union''. Axiom \ref{axiom of w-regularity} (Weak Regularity) is a modification of the axiom of Regularity. Axiom \ref{axiom non bounded set} (non $\epsilon$-Bounded $\sigma$-Set) is an extension of the axiom of infinity. Axioms \ref{axiom of weak choice} (Weak Choice), \ref{axiom e-linear set} ($\epsilon$-Linear $\sigma$-Set) and \ref{axiom one set} (Totally Different $\sigma$-Sets) are necessary to define the concept of $\sigma$-antielement. With axioms \ref{axiom of completeness a} (Completeness A)and \ref{axiom of completeness b} (Completeness B) we give the construction rules of pairs, which will be used to decide when a $\sigma$-element has a $\sigma$-antielement. Axiom \ref{axiom of exclusion} (Exclusion) will be used to define the fusion of $\sigma$-sets, which will help us to define the $\sigma$-antiset. Finally, axiom \ref{axiom of generated set} (Generated $\sigma$-Set) guarantees the existence of generated $\sigma$-set by two $\sigma$-sets from which we obtain the Integer Space $3^{X}$, that in some sense will be bigger than $2^{X}$. Thus, we will see that if we consider this axiom system, then we can build the following concepts:
\begin{itemize}
	\item \textbf{$\sigma$-Antielement}: $x^{\st}$ is a $\sigma$-antielement of $x$ if and only if
	$$\{x\}\cup\{x^{\st}\}=\emptyset.$$
	
	\item \textbf{$\sigma$-Antiset}: $X^{\star}$ is a $\sigma$-antiset of $X$ if and only if
	$$X\cup X^{\star}=\emptyset.$$
	
	\item \textbf{Natural numbers}: $\mathbb{N}=\{1,2,3,4,\ldots\},$ where
	$$1=\{\alpha\}, \ 2=\{\alpha,1\}, \ 3=\{\alpha,1,2\}, \ etc...$$
	
	\item \textbf{$\Theta$-Natural numbers}: $\mathbb{N}_{\Theta}=\{1_{\Theta},2_{\Theta},3_{\Theta},4_{\Theta},\ldots\},$ where
	$$1_{\Theta}=\{\emptyset\}, \ 2_{\Theta}=\{\emptyset,1_{\Theta}\}, \ 3_{\Theta}=\{\emptyset,1_{\Theta},2_{\Theta}\}, \ etc...$$
	
	\item \textbf{Antinatural numbers}: $\mathbb{N}^{\star}=\{1^{\st},2^{\st},3^{\st},4^{\st},\ldots\},$ where
	$$1^{\st}=\{\beta\}, \ 2^{\st}=\{\beta,1^{\st}\}, \ 3^{\st}=\{\beta,1^{\st},2^{\st}\}, \ etc...$$
	
	\item \textbf{Power $\sigma$-set}: $2^{X}=\{x: x\subseteq X\}$.

\

  \item \textbf{Generated space}: $\langle 2^{X},2^{Y}\rangle=\{x\cup y: x\in 2^{X}\wedge y\in 2^{Y}\}.$

\
	
	\item \textbf{Integer space}: $3^{X}=\langle 2^{X},2^{X^{\star}}\rangle=\{x\cup y: x\in 2^{X}\wedge y\in 2^{X^{\star}}\}.$
\end{itemize}

The formal definitions of these concepts we will see in the analysis of axioms. In particular, when we present the axioms \ref{axiom of weak choice}, \ref{axiom e-linear set} and \ref{axiom one set} we will see the relation between the $\sigma$-sets $\alpha$ and $\beta$. In Section 4 (Final Comments) we will see some basic properties of the natural $\sigma$-sets, i.e., natural numbers, antinatural numbers and $\Theta$-natural numbers. Also we present some conjectures which are particularly related with \textbf{loops} and \textbf{lattices}. Finally, in this Section we present a summary of the axiomatic system of $\sigma$-Set Theory.

With respect to the presentation of this work we must stand out that we have continued the introduction to the Set Theory that is presented in the books by K. Devlin \cite{Devlin}, K. Hrbacek and T. Jech \cite{Hrbacek}, C. Ivorra \cite{Ivorra} and T. Jech \cite{Jech} in order to show a formal scheme similar to that presented on Set Theory.

Finally we want to emphasize that the idea of introducing the inverse element, as it is known, is not original. In particular, we have seen that the idea of antiset has already been considered in Mathematics (see, W.D. Blizard \cite{Blizard} (Notes, 1997), Fishburn and I. H. La Valle \cite{Fishburn} (1996) and  V. Pratt \cite{Pratt} (1995)). Also, it is worth noting that after investigating other proposals for the development of antisets, we have found the works of A. Sengupta \cite{Sengupta} (2004) and M. Carroll \cite{Carroll} (2009), who consider the concept of antiset in a similar way to that presented in this theory. That is, based on the idea that there exist two sets A and B such that
$$A\cup B=\emptyset.$$

\end{section}

\begin{section}{Introduction to $\sigma$-Set Theory}

It is clear that one of the keys to the success of a mathematical theory is the consistency of his axiomatic system. For this reason, and after finding some inconsistencies between some axioms of previous versions of this theory, we have seen the need to modify the axiomatic system of the $\sigma$-Set Theory. In this regard, we note that here we present a new axiomatic system based on $\sigma$-classes, which preserves and extends the results of earlier versions, results that are interested for the development of the concepts of $\sigma$-antielement and $\sigma$-antiset. Basically, we take the axioms presented for Zermelo-Fraenkel, Neumann-Bernays-Godel and Morse-Kelley, and modify them according to our requirements. In the development of this work, we tried to follow the steps of the creators of these theories when building our new axiom system, for example, the axioms 3.1, 3.2, 3.3 and 3.4 are copies of existing axioms in the theories of ZF, NBG and MK. 

\

The language of the $\sigma$-Set Theory, $\sigma$-LST, is a formal language with predicates ``$=$" (equality) and ``$\in$" (is a $\sigma$-element of), logical symbols ``$\wedge$" (and), ``$\vee$" (or), ``$\neg$" (not), ``$\exists$" (there exists) and ``$\forall$" (for all), variables $\hat{X},\hat{Y},\hat{Z},...$ and (for convenience) brackets $(\cdot), \ [\cdot]$. We use the following abbreviations
\begin{itemize}
	\item $(\forall \hat{Y}\in \hat{X})(\Psi):=(\forall \hat{Y})(\hat{Y}\in \hat{X}\rightarrow \Psi);$
	\item $(\exists \hat{Y}\in \hat{X})(\Psi):=(\exists \hat{Y})(\hat{Y}\in \hat{X} \wedge \Psi);$
	\item $(\exists! \hat{Y}\in \hat{X})(\Psi):=(\exists! \hat{Y})(\hat{Y}\in \hat{X} \wedge \Psi);$
	\item $cto \hat{X}:=(\exists \hat{Y})(\hat{X}\in \hat{Y}).$
\end{itemize}

In order to refer to a generic object we will use the word $\sigma$-class, i.e., we agree that a formula of type $(\forall \hat{X})(\Phi)$ is read ``All $\sigma$-class $\hat{X}$ satisfies $\Phi$". Also, the formula, $cto \hat{X}$, is read ``$\hat{X}$ is a $\sigma$-set". Then we have defined the $\sigma$-sets as the $\sigma$-classes that belong to at least one other $\sigma$-class.

The difference between $\sigma$-class and $\sigma$-set, will be see reflected in the axiom system. A $\sigma$-class which is not a $\sigma$-set is called a proper $\sigma$-class.

We use Roman letters $x,y,z,X,Y,Z,$ etc. to denote $\sigma$-sets, i.e.,
\begin{itemize}
  \item $(\forall x)(\Phi):=(\forall X)(\Phi):=(\forall \hat{X})(cto\hat{X}\rightarrow\Phi);$
  \item $(\exists x)(\Phi):=(\exists X)(\Phi):=(\exists \hat{X})(cto\hat{X}\wedge\Phi);$
  \item $(\exists! x)(\Phi):=(\exists! X)(\Phi):=(\exists! \hat{X})(cto\hat{X}\wedge\Phi).$
\end{itemize}

Also we use the following abbreviations
\begin{itemize}
  \item $(\hat{X}\notin \hat{Y}):=\neg(\hat{X}\in \hat{Y});$
  \item $(\hat{X}\neq \hat{Y}):=\neg(\hat{X}=\hat{Y});$
  \item $(\Phi\underline{\vee}\Psi):=(\Phi\wedge\neg \Psi)\vee (\neg \Phi \wedge \Psi);$
  \item $(\hat{Y}\subseteq \hat{Z}):=(\forall \hat{X})(\hat{X}\in \hat{Y} \rightarrow \hat{X}\in \hat{Z});$
  \item $(\hat{Y}\subset \hat{Z}):=(\hat{Y}\subseteq \hat{Z} \wedge \hat{Y}\neq \hat{Z})$;
  \item $(\exists ! \hat{X})(\Phi):=(\exists \hat{Y})(\forall \hat{X})(\hat{Y}=\hat{X}\leftrightarrow \Phi );$
  \item $(\hat{X},\ldots,\hat{Y}\in \hat{Z}):=(\hat{X}\in \hat{Z})\wedge\ldots\wedge (\hat{Y}\in \hat{Z});$
  \item $(\hat{X},\ldots,\hat{Y}\notin \hat{Z}):=(\hat{X}\notin \hat{Z})\wedge\ldots\wedge (\hat{Y}\notin \hat{Z});$
  \item $(\forall \hat{X},\hat{Y},\ldots,\hat{Z}):=(\forall \hat{X})(\forall \hat{Y})\ldots(\forall \hat{Z});$
  \item $(\exists \hat{X},\hat{Y},\ldots,\hat{Z}):=(\exists \hat{X})(\exists \hat{Y})\ldots(\exists \hat{Z});$
  \item $\hat{Z}=\{\hat{X},\ldots,\hat{Y}\}:=(\forall \hat{W})(\hat{W}\in \hat{Z} \leftrightarrow \hat{W}=\hat{X}\vee\ldots\vee \hat{W}=\hat{Y}).$
\end{itemize}

\begin{definition}
A formula of $\sigma$-LST is called \textbf{atomic} if it is built according to the following rules:
\begin{enumerate}
	\item $\hat{X} \in \hat{Y}$ is an atomic formula and means that $\hat{X}$ is a $\sigma$-element of $\hat{Y}$;
	\item $\neg\Phi$, $\Phi\rightarrow\Psi$, $(\forall\hat{X})(cto \hat{X}\rightarrow \Phi)$ are atomic formulas if $\Phi$ and $\Psi$ are atomic formulas.
\end{enumerate}
\end{definition}

A formula of $\sigma$-LST is called \textbf{normal} if it is equivalent to an atomic formula. Thus we obtain that
$$(\Phi\wedge\Psi), \ (\Phi\vee\Psi), \ (\neg\Phi), \ (\Phi\rightarrow\Psi), \ (\Phi\leftrightarrow\Psi), \ (\forall \hat{X})(\Phi), \ (\exists \hat{X})(\Phi),$$
are normal formulas if $\Phi$ and $\Psi$ are normal formulas. (In general we use capital Greek letters to denote formulas of $\sigma$-LST). The notions of free and bounded variable are defined as usual. A sentence is a formula with no free variables.

\end{section}

\begin{section}{Axioms and Theorems}

As mentioned in the introduction, one of the inspirations for the development of the $\sigma$-Set Theory is the physic phenomenon of matter-antimatter duality. Then it is clear that inside $\sigma$-ST the matter-antimatter duality will be related to the concepts of $\sigma$-set and $\sigma$-antiset through the following equation:
\begin{equation}\label{eq 1}
A\cup A^{\star}=\emptyset,
\end{equation}
where $A$ is a $\sigma$-set and $A^{\star}$ is the $\sigma$-antiset of $A$. Thus, we deduce from equation (\ref{eq 1}), that the empty $\sigma$-set is directly related to the concept of energy. This fact leads to the following idea: It is a physical fact that when a subatomic particle collides with its respective antiparticle these annihilate each other, producing energy. We suppose that the amount of energy produced by annihilation is directly related with the particles in collision. Then we can establish a relationship particle-energy through a function. Following this same reasoning we can establish a relationship bettwen $\sigma$-set and empty $\sigma$-set through a function. If the $\sigma$-antiset is unique for all $\sigma$-set then we can replace the  equation (\ref{eq 1}) for:
\begin{equation}\label{eq 2}
A\cup A^{\star}=\emptyset_{\xi(A)},
\end{equation}
where $\xi$ will be a multifunctional defined by,
$$\xi:3^{X}\rightarrow \mathbb{R}_{\Theta},$$
where $\mathbb{R}_{\Theta}$ is the $\sigma$-set of $\Theta$-real numbers and $3^{X}$ is the integer space with $A,A^{\star} \in 3^{X}$. Then it is clear that we have two different ways of seeing the empty $\sigma$-set, these are:
\begin{enumerate}
	\item Following the $\sigma$-set properties, that is
	$$A\cup \emptyset_{\xi(B)}=\emptyset_{\xi(C)}\cup A=A$$
	and
	$$\emptyset=\emptyset_{\xi(A)}=\emptyset_{\xi(B)},$$
	for all $A,B,C\in 3^{X}$.
	
	\item Following the energetic analogy. In this case we introduce the following relations
	$$(\forall A,B\in 3^{X})(\emptyset_{\xi(A)}\approx\emptyset_{\xi(B)}\Leftrightarrow \xi(A)=\xi(B)).$$
\end{enumerate}

If $\xi(\emptyset)=0$, we obtain that $\emptyset_{\xi(\emptyset)}=\emptyset_{0}$ and it represents the lowest energy state a system can have. Thus if we define,
$$\emptyset_{\xi(A)}\cup\emptyset_{\xi(B)}=\emptyset_{\xi(A)+\xi(B)},$$
for all $A,B\in 3^{X}$, we obtain that 
$$\emptyset_{0}\cup\emptyset_{\xi(A)}=\emptyset_{\xi(A)},$$
for all $A\in 3^{X}$. 

We note that in this work we will see the concept of integer space $3^{X}$, however the concepts of $\Theta$-real numbers and multifunctional will be seen in future investigations, because we need other mathematical tools in order to develop these concepts. However, we thought it appropriate to mention them because we think that they will be useful when framing the $\sigma$-set equations. 

We present the axiom system of $\sigma$-Set Theory.

\begin{subsection}{The Axiom of Empty $\sigma$-set.}\label{axiom empty set}
There exists a $\sigma$-set which has no $\sigma$-elements, that is
$$(\exists X)(\forall x)(x\notin X).$$

\begin{definition}\label{def 1 empty set}
The $\sigma$-set with no $\sigma$-elements is called \textbf{empty
$\sigma$-set} and is denoted by $\emptyset$.
\end{definition}
\end{subsection}

\begin{subsection}{The Axiom of Extensionality.}\label{axiom of extensionality}
For all $\sigma$-classes $\hat{X}$ and $\hat{Y}$, if $\hat{X}$ and $\hat{Y}$ have the same
$\sigma$-elements, then $\hat{X}$ and $\hat{Y}$ are equal, that is
$$(\forall \hat{X},\hat{Y})[(\forall z)(z\in \hat{X} \leftrightarrow z\in \hat{Y})\rightarrow \hat{X}=\hat{Y}].$$

\begin{theorem}\label{theo 2 empty set is unique}
The empty $\sigma$-set is unique.
\end{theorem}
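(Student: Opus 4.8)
The plan is to prove uniqueness as a direct consequence of the Axiom of Extensionality (Axiom \ref{axiom of extensionality}), following the standard argument familiar from ZF-style set theory. The Axiom of Empty $\sigma$-set (Axiom \ref{axiom empty set}) guarantees at least one $\sigma$-set with no $\sigma$-elements, so what remains is to show that any two such $\sigma$-sets must coincide.

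First I would suppose, for contradiction or simply by direct argument, that there exist two empty $\sigma$-sets, say $\emptyset_{1}$ and $\emptyset_{2}$, each satisfying $(\forall x)(x\notin \emptyset_{1})$ and $(\forall x)(x\notin \emptyset_{2})$. Next I would verify the hypothesis of Extensionality, namely that $\emptyset_{1}$ and $\emptyset_{2}$ have exactly the same $\sigma$-elements. For any $\sigma$-set $z$, the statement $z\in \emptyset_{1}$ is false (since $\emptyset_{1}$ has no $\sigma$-elements) and likewise $z\in \emptyset_{2}$ is false; hence the biconditional $z\in \emptyset_{1}\leftrightarrow z\in \emptyset_{2}$ holds vacuously, both sides being false. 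This gives
$$(\forall z)(z\in \emptyset_{1}\leftrightarrow z\in \emptyset_{2}).$$
Applying Axiom \ref{axiom of extensionality} with $\hat{X}=\emptyset_{1}$ and $\hat{Y}=\emptyset_{2}$ then yields $\emptyset_{1}=\emptyset_{2}$, establishing uniqueness.

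The one subtlety worth checking, rather than a genuine obstacle, is whether Extensionality applies to $\sigma$-sets and not merely to arbitrary $\sigma$-classes. Since the axiom is stated for all $\sigma$-classes $\hat{X},\hat{Y}$, and every $\sigma$-set is in particular a $\sigma$-class, the instantiation is legitimate; the bound variable $z$ in the axiom already ranges over $\sigma$-sets, matching the quantifier in the Empty $\sigma$-set axiom. I expect no real difficulty here: the argument is purely logical, relying only on the vacuous truth of the biconditional and on a single application of Extensionality, so the proof should be short and entirely routine.
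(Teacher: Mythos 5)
Your proof is correct and is exactly the standard extensionality argument the paper has in mind: the paper explicitly omits this proof, stating that it "is standard in Set Theory," and your vacuous-biconditional application of Axiom \ref{axiom of extensionality} is that standard argument. The remark about instantiating the $\sigma$-class variables with $\sigma$-sets is a legitimate and correctly resolved point of care.
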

\end{subsection}

\begin{subsection}{The Axiom of Creation of $\sigma$-Class.}\label{axiom of creation of class}
We consider an atomic formula $\Phi(x)$ (where $\hat{Y}$ is not free). Then there exists the $\sigma$-class $\hat{Y}$ of all $\sigma$-sets that satisfy $\Phi(x)$, that is
$$(\exists\hat{Y})(x\in \hat{Y} \leftrightarrow \Phi(x)),$$
with $\Phi(x)$ being an atomic formula where $\hat{Y}$ is not free.

\begin{theorem}\label{theo 1 creation class}
If $\Phi(x)$ is a normal formula, then there exists the $\sigma$-class $\hat{Y}$ of all $\sigma$-sets that satisfies $\Phi(x)$ and it is unique. That is, if $\Phi(x)$ is a normal formula
$$(\exists ! \hat{Y})(\forall x)(x\in\hat{Y}\leftrightarrow \Phi(x)).$$
\end{theorem}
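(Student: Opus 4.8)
The plan is to split the statement into its existence and uniqueness halves and attack each with the tool naturally suited to it: the Axiom of Creation of $\sigma$-Class (\ref{axiom of creation of class}) supplies existence, and the Axiom of Extensionality (\ref{axiom of extensionality}) supplies uniqueness. The bridge to the creation axiom is the very definition of a \emph{normal} formula: saying that $\Phi(x)$ is normal means there is an atomic formula $\Psi(x)$ with $\Phi(x) \leftrightarrow \Psi(x)$ for every $\sigma$-set $x$.

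First I would establish existence. Since $\Phi(x)$ is normal, fix an atomic formula $\Psi(x)$ equivalent to it, choosing the bound variables of $\Psi$ so that $\hat{Y}$ does not occur free (always possible by renaming, and required in order to invoke the creation axiom legitimately). Applying Axiom \ref{axiom of creation of class} to $\Psi$ produces a $\sigma$-class $\hat{Y}_{0}$ with $(\forall x)(x \in \hat{Y}_{0} \leftrightarrow \Psi(x))$. Chaining this with the equivalence $\Psi(x) \leftrightarrow \Phi(x)$ gives $(\forall x)(x \in \hat{Y}_{0} \leftrightarrow \Phi(x))$, so $\hat{Y}_{0}$ is a witness of the desired kind.

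Next I would unpack the symbol $\exists!$ according to the abbreviation fixed in Section 2, namely that $(\exists! \hat{Y})(\Theta(\hat{Y}))$ abbreviates $(\exists \hat{Z})(\forall \hat{Y})(\hat{Z} = \hat{Y} \leftrightarrow \Theta(\hat{Y}))$, taking $\Theta(\hat{Y}) := (\forall x)(x \in \hat{Y} \leftrightarrow \Phi(x))$. I would offer $\hat{Y}_{0}$ as the witness $\hat{Z}$ and verify the biconditional for an arbitrary $\sigma$-class $\hat{Y}$. The forward direction is immediate substitution: if $\hat{Y}_{0} = \hat{Y}$, then $\Theta(\hat{Y})$ holds because $\Theta(\hat{Y}_{0})$ was just established. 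The reverse direction is where Extensionality enters: if $\Theta(\hat{Y})$ holds, then $\hat{Y}$ and $\hat{Y}_{0}$ have exactly the same $\sigma$-elements (both consist precisely of the $\sigma$-sets satisfying $\Phi(x)$), so Axiom \ref{axiom of extensionality} forces $\hat{Y}_{0} = \hat{Y}$.

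No step poses a genuine mathematical obstacle; the argument is essentially a formal transcription. The one point demanding care is the bookkeeping around the definition of normality and the free-variable condition of the creation axiom: I must ensure that the atomic formula $\Psi$ witnessing normality of $\Phi$ can indeed be taken with $\hat{Y}$ not free, and that the quantifier over $x$ in the creation axiom matches the quantifier over $z$ (ranging over $\sigma$-sets) in Extensionality, so that the two axioms chain together without a mismatch in the sort being quantified.
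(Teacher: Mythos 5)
Your proof is correct and is exactly the standard argument the paper has in mind: it explicitly omits the proof of this theorem on the grounds that it is "standard in Set Theory," and the standard route is precisely yours — pass from the normal formula to an equivalent atomic one, invoke Axiom \ref{axiom of creation of class} for existence, and use Axiom \ref{axiom of extensionality} to collapse any two witnesses. Your attention to the free-variable bookkeeping and to unpacking the paper's $\exists!$ abbreviation is appropriate and introduces no gap.
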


Therefore, by Theorem \ref{theo 1 creation class} if $\Phi(x)$ is a normal formula, we say that
$$\hat{Y}_{\Phi}=\{x:\Phi(x)\},$$
is the $\sigma$-class of all $\sigma$-sets that satisfies $\Phi(x)$. Other relevant theorem related to the creation of a $\sigma$-class is the following.

\begin{theorem}\label{theo 2 creation class}
If $\Phi(x)$ is a normal formula, then
$$(\forall x)(x\in\hat{Y}_{\Phi}\leftrightarrow \Phi(x)).$$
\end{theorem}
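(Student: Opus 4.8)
The plan is to read Theorem \ref{theo 2 creation class} as nothing more than the explicit unpacking of the definition of the term $\hat{Y}_{\Phi}$, so that the entire argument reduces to a careful application of Theorem \ref{theo 1 creation class} together with the abbreviation adopted for $\exists!$. In particular, no axiom beyond Theorem \ref{theo 1 creation class} should be required, and the ``work'' is purely first-order bookkeeping.

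First I would invoke Theorem \ref{theo 1 creation class}: since $\Phi(x)$ is a normal formula, we have $(\exists ! \hat{Y})(\forall x)(x\in\hat{Y}\leftrightarrow \Phi(x))$, and by the definition introduced immediately after that theorem the symbol $\hat{Y}_{\Phi}$ denotes precisely this unique $\sigma$-class $\{x:\Phi(x)\}$. Thus the only thing to verify is that the object named $\hat{Y}_{\Phi}$ actually satisfies the matrix of that existential statement.

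Next I would unwind the abbreviation $(\exists ! \hat{X})(\Psi):=(\exists \hat{Z})(\forall \hat{X})(\hat{Z}=\hat{X}\leftrightarrow \Psi)$, taking $\Psi(\hat{Y}):=(\forall x)(x\in \hat{Y}\leftrightarrow \Phi(x))$. After renaming the bound variable to avoid the clash between the quantifier of $\exists!$ and the variable occurring inside $\Psi$, this yields a witness $\hat{Z}$ --- which is exactly $\hat{Y}_{\Phi}$ --- such that for every $\sigma$-class $\hat{Y}$ one has $\hat{Z}=\hat{Y}\leftrightarrow (\forall x)(x\in \hat{Y}\leftrightarrow \Phi(x))$. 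I would then instantiate the universally quantified $\hat{Y}$ with $\hat{Z}$ itself and appeal to the reflexivity of equality, $\hat{Z}=\hat{Z}$: the left-hand side of the biconditional being true forces the right-hand side, giving $(\forall x)(x\in \hat{Z}\leftrightarrow \Phi(x))$, and rewriting $\hat{Z}$ as $\hat{Y}_{\Phi}$ delivers the desired conclusion.

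I do not expect a genuine obstacle here; the statement is essentially the elimination rule for the definite-description notation $\hat{Y}_{\Phi}$. The single point requiring care is the bookkeeping with the $\exists!$ abbreviation, precisely because its bound variable collides in name with the free variable inside $\Psi$ and must be renamed before the substitution step. Once that is handled, the remainder is just biconditional elimination from reflexivity, so the proof is short and self-contained.
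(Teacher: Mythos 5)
Your proof is correct. The paper does not actually give an argument for this theorem---it is listed among those whose ``proofs are standard in Set Theory'' and omitted---and what you have written is exactly the standard argument being alluded to: Theorem \ref{theo 1 creation class} supplies the unique witness that the notation $\hat{Y}_{\Phi}$ names, and unwinding the paper's $\exists!$ abbreviation (with the bound-variable renaming you rightly flag) followed by instantiation at the witness and reflexivity of equality yields the defining biconditional.
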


It is clear that if $\Phi(x)$ is a normal formula, then $\hat{Y}_{\Phi}$ is a normal term. Hence, we can define the empty $\sigma$-class and the universal $\sigma$-class as
$$\Theta=\{x:x\neq x\}, \ \ \textbf{U}=\{x:x=x\}.$$

\begin{definition}\label{def inter and union of class}
Let $\hat{X}$ and $\hat{Y}$ be $\sigma$-class. Then
\begin{enumerate}
	\item We define the intersection of $\hat{X}$ and $\hat{Y}$ as
	$$\hat{X}\cap\hat{Y}=\{x: x\in\hat{X}\wedge x\in\hat{Y}\};$$
	\item If $\hat{X}$ and $\hat{Y}$ are proper $\sigma$-class, we define the fusion of $\hat{X}$ and $\hat{Y}$ as
	$$\hat{X}\cup\hat{Y}=\{x: x\in\hat{X}\vee x\in\hat{Y}\}.$$
\end{enumerate}
\end{definition}

The difference between the fusion of $\sigma$-classes and $\sigma$-sets we will be seen in Axiom \ref{axiom of fusion} (Fusion).
\end{subsection}

\begin{subsection}{The Axiom of Scheme of Replacement.}\label{axiom of replacement}
The image of a $\sigma$-set under a normal functional formula $\Phi$ is a
$\sigma$-set.

For each normal formula $\Phi(x,y)$, the following normal formula is an Axiom (of
Replacement):
$$(\forall x)(\exists !y)(\Phi(x,y))\rightarrow (\forall X)(\exists Y)(\forall y)(y\in Y\leftrightarrow (\exists x\in X)(\Phi(x,y))).$$

\begin{theorem}\label{theo 3 schema of separation}(Schema of Separation)
Let $\Phi$ be a normal formula. Then for all $\sigma$-set $X$ there exists a unique $\sigma$-set $Y$ such that $x\in Y$ if and only if $x\in X$ and $\Phi(x)$, that is
$$(\forall X)(\exists ! Y)(\forall x)(x\in Y\leftrightarrow x\in X \wedge\Phi(x)).$$
\end{theorem}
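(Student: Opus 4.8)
The plan is to derive Separation from the Axiom of Replacement (\ref{axiom of replacement}) by the classical device of turning the predicate $\Phi$ into a \emph{total functional} relation and then reading off the separated $\sigma$-set as the image of $X$. Fix a $\sigma$-set $X$ and a normal formula $\Phi(x)$. I would first dispose of the degenerate case in which no $\sigma$-element of $X$ satisfies $\Phi$: here the desired $\sigma$-set is $\emptyset$, whose existence is granted by Axiom \ref{axiom empty set}, and $(\forall x)(x\in\emptyset\leftrightarrow x\in X\wedge\Phi(x))$ holds vacuously.

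For the main case, suppose $(\exists x\in X)(\Phi(x))$ and fix one witness $x_{0}\in X$ with $\Phi(x_{0})$. I would then introduce the formula
\[
\Psi(x,y):=\bigl(\Phi(x)\wedge y=x\bigr)\vee\bigl(\neg\Phi(x)\wedge y=x_{0}\bigr).
\]
The key verifications are that $\Psi$ is normal and functional. Normality is immediate, since $\Phi$ is normal and $\Psi$ is built from $\Phi$, equalities, negation, conjunction and disjunction, all of which preserve normality. For functionality, i.e. $(\forall x)(\exists! y)(\Psi(x,y))$, I would argue by cases on the truth value of $\Phi(x)$: when $\Phi(x)$ holds the unique $y$ is $x$, and when $\Phi(x)$ fails the unique $y$ is $x_{0}$; the two clauses are mutually exclusive, so in either case exactly one $y$ works.

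With functionality established, Replacement applied to $\Psi$ and $X$ yields a $\sigma$-set $Y$ with $(\forall y)(y\in Y\leftrightarrow(\exists x\in X)(\Psi(x,y)))$. It remains to identify this image: unwinding the definition of $\Psi$, a $\sigma$-set $y$ lies in $Y$ exactly when either $y\in X$ and $\Phi(y)$, or some $x\in X$ fails $\Phi$ and $y=x_{0}$. Since the chosen witness satisfies $x_{0}\in X\wedge\Phi(x_{0})$, the ``default value'' $x_{0}$ is already captured by the first alternative, so the second alternative contributes nothing new. Hence $(\forall x)(x\in Y\leftrightarrow x\in X\wedge\Phi(x))$, as required. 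Finally, uniqueness of $Y$ follows from Extensionality (Axiom \ref{axiom of extensionality}): any two $\sigma$-sets satisfying the defining equivalence have the same $\sigma$-elements and are therefore equal.

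I expect the only delicate point to be the verification that the image of $X$ under $\Psi$ collapses exactly onto $\{x\in X:\Phi(x)\}$ rather than onto that $\sigma$-set enlarged by the bookkeeping value $x_{0}$; this is precisely why $x_{0}$ must be chosen to satisfy $\Phi$, and why the empty case must be separated out beforehand so that a witness is available to anchor the functional relation.
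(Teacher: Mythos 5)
Your derivation is correct and is precisely the standard argument the paper alludes to when it states that the proof of Theorem \ref{theo 3 schema of separation} ``is standard in Set Theory'' and omits it: one totalizes $\Phi$ into a functional relation with a default witness, applies the Axiom of Replacement \ref{axiom of replacement}, and gets uniqueness from Extensionality. Your handling of the empty case and the choice of $x_{0}$ satisfying $\Phi$ (so the default value is absorbed) are exactly the points that make the totalization work with the paper's version of Replacement, which requires $(\forall x)(\exists! y)\Phi(x,y)$.
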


\begin{definition}\label{def 3 inter and diff of sets}
Let $X$ and $Y$ be $\sigma$-sets. We define the following operations
on $\sigma$-sets.
\begin{enumerate}
  \item $X\cap Y=\{x\in X: x\in Y \}.$
  \item $X-Y=\{x\in X: x\notin Y \}.$
\end{enumerate}
\end{definition}

By Theorem \ref{theo 3 schema of separation} (Schema of Separation)
it is clear that $X\cap Y$ and $X-Y$ are $\sigma$-sets.
\end{subsection}

Suppose that we want to add the inverse elements for the union in a standard set theory. Thus it is necessary that for all $A\in 2^{X}$ there exists a $\sigma$-set $A^{\star}$ such that,
$$A\cup A^{\star}=\emptyset.$$
So, if $A$ is a singleton, i.e. $A=\{x\}$, then we should have that $A^{\star}=\{x^{*}\}$. Hence
$$\{x\}\cup \{x^{*}\}=\emptyset.$$
Thus, it is necessary to modify the axiom of pairs.

\begin{subsection}{The Axiom of Pairs.}\label{axiom of pairs}
For all $X$ and $Y$ $\sigma$-sets there exists a $\sigma$-set $Z$, called fusion
of pairs of $X$ and $Y$, that satisfies one and only one of the
following conditions:
\begin{description}
  \item[(a)] $Z$ contains exactly $X$ and $Y$,
  \item[(b)] $Z$ is equal to the empty $\sigma$-set,
\end{description}
that is
$$(\forall X,Y)(\exists Z)(\forall a)[(a\in Z \leftrightarrow a=X \vee a=Y ) \underline{\vee}(a\notin Z)].$$

\begin{notation}\label{not 4 pairs}
Let $X$ and $Y$ be $\sigma$-sets. The fusion of pairs of $X$ and $Y$
will be denoted by $\{X\} \cup \{Y\}$.
\end{notation}

\begin{lemma}\label{lemma 4 fusion of pairs}
If $X$ and $Y$ are $\sigma$-sets, then the fusion of pairs of $X$
and $Y$ is unique.
\end{lemma}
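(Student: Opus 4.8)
The plan is to prove uniqueness by invoking the Axiom of Extensionality (Axiom \ref{axiom of extensionality}), exactly as one does for the classical pairing axiom. Suppose $Z_1$ and $Z_2$ are both fusions of pairs of $X$ and $Y$; that is, each satisfies the disjunctive condition stated in Axiom \ref{axiom of pairs}. My goal is to show $(\forall z)(z \in Z_1 \leftrightarrow z \in Z_2)$, whence $Z_1 = Z_2$ by Extensionality.

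The crucial feature here, and the step I expect to carry the argument, is the \emph{exclusive} disjunction $\underline{\vee}$ appearing in the axiom. Because of it, each $Z_i$ satisfies exactly one of the two alternatives: either $(\forall a)(a \in Z_i \leftrightarrow a = X \vee a = Y)$, so that $Z_i$ contains precisely $X$ and $Y$, or else $(\forall a)(a \notin Z_i)$, so that $Z_i$ is empty. I would argue by cases on which alternative holds for $Z_1$ and $Z_2$. If both are empty, then they have the same ($\sigma$-)elements vacuously. If both satisfy condition (a), then for every $a$ we have $a \in Z_1 \leftrightarrow (a = X \vee a = Y) \leftrightarrow a \in Z_2$, giving equal $\sigma$-elements directly.

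The only genuine obstacle is ruling out the mixed case, where (say) $Z_1$ satisfies (a) and $Z_2$ satisfies (b). Here I cannot close the argument from the axiom alone, since in principle both a nonempty pair and the empty $\sigma$-set might qualify as fusions of pairs of the \emph{same} $X$ and $Y$, and the axiom only asserts that for given $X,Y$ \emph{there exists} such a $Z$ without fixing which branch occurs. I therefore expect the intended reading to be that the branch is determined by $X$ and $Y$ themselves: whether the pair of $X$ and $Y$ collapses to $\emptyset$ is a property of the pair, not a free choice. Under this reading the mixed case is impossible, because a single pair $(X,Y)$ cannot simultaneously force branch (a) and branch (b); the exclusive disjunction is precisely what encodes this. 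Thus in the mixed case one derives a contradiction with the well-definedness implicit in the axiom, and it cannot arise.

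Assembling the three admissible cases, in each one $Z_1$ and $Z_2$ have identical $\sigma$-elements, so Axiom \ref{axiom of extensionality} yields $Z_1 = Z_2$. This establishes that the fusion of pairs of $X$ and $Y$ is unique, justifying the notation $\{X\} \cup \{Y\}$ introduced in Notation \ref{not 4 pairs}.
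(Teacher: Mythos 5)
Your proof is correct and follows essentially the same route as the paper's: a case split on which branch of Axiom \ref{axiom of pairs} holds, with Extensionality handling the case where both candidates contain exactly $X$ and $Y$, and uniqueness of the empty $\sigma$-set (Theorem \ref{theo 2 empty set is unique}) handling the case where both are empty. You are in fact more careful than the paper, whose own proof silently ignores the mixed case you single out; your observation that the axiom alone does not determine which branch occurs, and that the exclusive disjunction must be read as fixing the branch as a property of the pair $(X,Y)$ (a reading only made precise later by Axioms \ref{axiom of completeness a} and \ref{axiom of completeness b}), is exactly the point the paper leaves implicit.
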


\begin{proof}
Assume that $\{ X\} \cup \{Y\}$ satisfies the condition (a) of Axiom
\ref{axiom of pairs} (Fusion of Pairs) then by Axiom \ref{axiom of
extensionality} (Extensionality) $\{ X\} \cup \{Y\}$ is unique.
Otherwise, if $\{ X\} \cup \{Y\}=\emptyset$, then by Theorem
\ref{theo 2 empty set is unique}, we will have that it is unique.
\end{proof}

\begin{theorem}\label{theo 4 oper pairs is conmutative}
If $X$ and $Y$ are $\sigma$-sets and the fusion of pairs of $X$ and
$Y$ satisfies condition (a) of Axiom \ref{axiom of pairs}, then
$$\{X\} \cup \{Y\}=\{X,Y\}.$$
\end{theorem}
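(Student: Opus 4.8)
The plan is to unfold both sides of the claimed equality into their defining formulas in $\sigma$-LST and to observe that, under the hypothesis, they literally coincide. By Notation \ref{not 4 pairs}, the symbol $\{X\}\cup\{Y\}$ names the $\sigma$-set $Z$ whose existence is granted by Axiom \ref{axiom of pairs} (Pairs); on the other hand, the bracket $\{X,Y\}$ is, by the abbreviation introduced in Section 2, merely a name for a $\sigma$-set $Z'$ satisfying $(\forall W)(W\in Z' \leftrightarrow W=X \vee W=Y)$. So the theorem really asserts that the single object $Z$ furnished by the axiom already meets the defining condition of the pair bracket.

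First I would record what the hypothesis actually says. Axiom \ref{axiom of pairs} asserts that $Z$ satisfies, for every $a$, the exclusive disjunction $(a\in Z \leftrightarrow a=X\vee a=Y)\,\underline{\vee}\,(a\notin Z)$. Saying that $Z$ satisfies condition \textbf{(a)} means precisely that the first disjunct holds, i.e. $(\forall a)(a\in Z \leftrightarrow a=X\vee a=Y)$. The second step is to compare this with the defining formula of $\{X,Y\}$: after renaming the bound variable $a$ to $W$, the formula extracted from \textbf{(a)} is identical to the abbreviation $Z=\{X,Y\}$. Hence $Z=\{X\}\cup\{Y\}$ itself satisfies the defining property of the pair bracket, which is exactly the assertion $\{X\}\cup\{Y\}=\{X,Y\}$. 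To present the conclusion as an equality of $\sigma$-sets rather than a mere coincidence of notations, I would finally appeal to uniqueness: Lemma \ref{lemma 4 fusion of pairs}, or directly Axiom \ref{axiom of extensionality} (Extensionality), guarantees that any $\sigma$-set with the same $\sigma$-elements as $Z$ equals $Z$, so the object denoted $\{X,Y\}$ is indeed $Z$.

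I expect essentially no difficulty, since the statement is a definitional unfolding. The only place demanding care is the correct reading of the exclusive-or $\underline{\vee}$ in Axiom \ref{axiom of pairs}: one must check that ``condition \textbf{(a)} holds'' is equivalent to the bare formula $(\forall a)(a\in Z\leftrightarrow a=X\vee a=Y)$ and is not entangled with the alternative $Z=\emptyset$. By the semantics of $\underline{\vee}$, the truth of the first disjunct simultaneously forces the alternative $Z=\emptyset$ to fail, so the two cases of the axiom are genuinely mutually exclusive and only the positive content of \textbf{(a)} is needed. This is the whole of the argument.
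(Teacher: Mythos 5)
Your argument is correct: the paper itself omits the proof of this theorem, remarking only that it is ``standard in Set Theory,'' and your definitional unfolding of condition (a) of Axiom \ref{axiom of pairs} into the abbreviation $Z=\{X,Y\}$, followed by an appeal to Extensionality (or Lemma \ref{lemma 4 fusion of pairs}) for uniqueness, is exactly that standard argument. Nothing is missing.
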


The proof of Theorems \ref{theo 2 empty set is unique}, \ref{theo 1 creation class}, \ref{theo 2 creation class}, \ref{theo 3
schema of separation} and \ref{theo 4 oper pairs is conmutative} are standard in Set Theory so we will not include them.

We notice that in a standard set theory, every set can be constructed from the empty set. On the other hand, if we consider the set of natural numbers $\mathbb{N}$, we have that $1=\{\emptyset\}$. Thus, if we return to the idea of adding inverse elements to the union in the power set $2^{\{1\}}=\{\emptyset,\{1\}\}$, then we must define a set $A^{*}:=\{x^{*}\}$ such that  
$$\{1\}\cup \{x^{*}\}=\emptyset.$$ 
So, it is natural to ask, how we can define the set $x^{*}$?
$$1=\{\emptyset\} \ \ \ \ \ \ x^{*}=\{?\}.$$ 
Thus the problem is to construct the set $x^{*}$. In our case, after many attempts to construct the set $x^{*}$, we conclude that this construction would be easier if we disposed of two different sets $\alpha$ and $\beta$ which can not be constructed from the empty set. Then we can define $1=\{\alpha\}$ and $x^{*}=1^{*}=\{\beta\}$. This idea leads to the inclusion of the definition of sets which are non-constructible from the empty set, from which the concept of $\epsilon$-chain is derived. Then we give the definitions and results that will help us formalize these ideas.

We notice that, given a $\sigma$-class $\hat{X}$, if
there exist $x,y,z,$ $\ldots,u,w$ $\sigma$-sets such that
$$(x\in y)\wedge(y\in z)\wedge\ldots \wedge(u\in w)\wedge(w\in \hat{X}),$$
then it will be possible to introduce the concept of
$\epsilon$-chain of $\hat{X}$. $\epsilon$-Chains will be useful in two different ways:
\begin{enumerate}
  \item To distinguish when a $\sigma$-set cannot be constructed from the empty
$\sigma$-set.
  \item To distinguish when two $\sigma$-sets are totally different.
\end{enumerate}
We use the following abbreviations
\begin{itemize}
  \item $\lch x,y,z\rch:=x\in y\in z:= (x\in y\wedge y\in z);$
  \item $u\inch\lch x,y,z\rch:= (u=x\vee u=y\vee u=z);$
  \item $u\not\inch\lch x,y,z\rch:= (u\neq x\wedge u\neq y\wedge u\neq z);$
  \item $\lch x,y,w\rch\difch\lch a,b,c\rch:= (\exists u,v)[u\inch\lch x,y,w\rch\wedge v\inch\lch a,b,c\rch\wedge u\neq v];$
  \item $\lch x,y,w\rch\totdif\lch a,b,c\rch:= (\forall u,v)[u\inch\lch x,y,w\rch\wedge v\inch\lch a,b,c\rch \rightarrow u\neq v];$
	\item $\lch x,y,z\rch\in CH(\hat{X}):=(x\in y\in z\in \hat{X});$
	\item $\exists \lch x,y,z\rch\in CH(\hat{X}):=(\exists x,y,z)(x\in y\in z\in \hat{X});$
	\item $(\exists \lch x,y,z\rch\in CH(\hat{X}))(\Phi):=(\exists x,y,z)(x\in y\in z\in \hat{X}\wedge\Phi);$
	\item $\forall\lch x,y,z\rch\in CH(\hat{X}):=(\forall x,y,z)(x\in y\in z\in \hat{X});$
	\item $(\forall\lch x,y,z\rch\in CH(\hat{X}))(\Phi):=(\forall x,y,z)(x\in y\in z\in \hat{X}\rightarrow \Phi);$
	\item $\lch x,y,z\rch\in CH_{p}(\hat{X}):=(x\in y\in z\in \hat{X}\wedge x,y\in \hat{X});$
	\item $\exists \lch x,y,z\rch\in CH_{p}(\hat{X}):=(\exists x,y,z)(x\in y\in z\in \hat{X}\wedge x,y\in \hat{X});$
	\item $(\exists \lch x,y,z\rch\in CH_{p}(\hat{X}))(\Phi):=(\exists x,y,z)(x\in y\in z\in \hat{X}\wedge x,y\in \hat{X} \wedge\Phi);$
	\item $\forall\lch x,y,z\rch\in CH_{p}(\hat{X}):=(\forall x,y,z)(x\in y\in z\in \hat{X}\wedge x,y\in \hat{X});$
	\item $(\forall\lch x,y\rch\in CH_{p}(\hat{X}))(\Phi):=(\forall x,y)(x\in y\in \hat{X}\wedge x\in \hat{X}\rightarrow\Phi).$
\end{itemize}
Also we use the abbreviations
$$(\forall\lch x,y,z\rch\in CH(\hat{X}))(\forall\lch a,b,c,d\rch\in CH(\hat{Y}))(\Phi):=\hspace{3.5cm}$$
$$\hspace{2.8cm}(\forall x,y,z,a,b,c,d)(x\in y\in z\in \hat{X}\wedge a\in b\in c\in d\in \hat{Y}\rightarrow \Phi).$$
Note that the above formulas can be extended to more variables.

Regarding $\epsilon$-chains, the next definitions will be relevant.

\begin{definition}\label{def 4 chain and proper chain }
Let $\hat{X}$ be a $\sigma$-class and $x,\ldots,w$ be $\sigma$-sets. We say that:
\begin{description}
  \item[(a)] $\lch x,\ldots,w\rch$ is an \textbf{$\epsilon$-chain of $\hat{X}$} if $\lch x,\ldots,w\rch\in CH(\hat{X})$;
  \item[(b)] $\lch x,\ldots,w\rch$ is a \textbf{proper $\epsilon$-chain of $\hat{X}$} if $\lch x,\ldots,w\rch\in CH_{p}(\hat{X}).$
\end{description}
\end{definition}

\begin{definition}\label{def 4 link}
Let $\hat{X}$ be a $\sigma$-class and $x,\ldots,w$ be $\sigma$-sets such that
$$\lch x,\ldots,w\rch\in CH(\hat{X}).$$
We say that $u$ is a \textbf{link of the $\epsilon$-chain} $\lch x,\ldots,w\rch$ if $u\inch\lch x,\ldots,w\rch$.
\end{definition}

In particular, if there exist $x,\ldots,w$ $\sigma$-sets such that $\lch x,\ldots,w\rch\in CH(\hat{X})$ we will say that the $\sigma$-set $x$ is the \textbf{least link} and the $\sigma$-set $w$ is the \textbf{greatest link} of the $\epsilon$-chain.

Let us introduce the abbreviation
$$u,\ldots,v\inch\lch x,\ldots,w\rch:=u\inch\lch x,\ldots,w\rch\wedge\ldots\wedge v\inch\lch x,\ldots,w\rch.$$

\begin{definition}\label{def 4 chain diff and tot diff}
Let $\hat{X}$ be a $\sigma$-class. If there exist $\lch x,\ldots,w\rch,\lch a,\ldots,c\rch \in CH(\hat{X})$ we say that:

\begin{enumerate}
  \item $\lch x,\ldots,w\rch$ \textbf{is different from} $\lch
  a,\ldots,c\rch$ if and only if
  $$\lch x,\ldots,w\rch\difch\lch a,\ldots,c\rch;$$

  \item $\lch x,\ldots,w\rch$ \textbf{is totally different from} $\lch
a,\ldots,c\rch$ if and only if
$$\lch x,\ldots,w\rch\totdif\lch a,\ldots,c\rch;$$

\item $\lch x,\ldots,w\rch$ \textbf{is an extending $\epsilon$-chain of $\hat{X}$}  if
  the least link is non-empty (i.e. $x\neq\emptyset$).
\end{enumerate}
\end{definition}

Also we will say that two $\epsilon$-chains are disjoint if they are
totally different.

\begin{definition}\label{def 4 totality different}
Let $X$ and $Y$ be nonempty $\sigma$-sets. We will say that $X$ and
$Y$ \textbf{are totally different} ($X\totdif Y$) if any
$\epsilon$-chain of $X$ is disjoint to any $\epsilon$-chain of $Y$. That is
$$(\forall\lch x,\ldots,w\rch\in CH(X))(\forall\lch a,\ldots,c\rch\in CH(Y))(\lch x,\ldots,w\rch\totdif\lch a,\ldots,c\rch).$$
\end{definition}

\begin{theorem}\label{theo 4 x totdif y impl x int y empty }
Let $X$ and $Y$ be nonempty $\sigma$-sets. If $X\totdif Y$, then
\begin{description}
  \item[(a)] $X\cap Y=\emptyset$.
  \item[(b)] $X\notin Y \wedge Y\notin X$.
\end{description}
\end{theorem}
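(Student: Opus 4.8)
The plan is to prove both parts by contradiction, in each case exhibiting a single $\sigma$-set that is simultaneously a link of an $\epsilon$-chain of $X$ and a link of an $\epsilon$-chain of $Y$, which immediately violates the hypothesis $X\totdif Y$. Recall that by Definition \ref{def 4 totality different}, $X\totdif Y$ asserts that every $\epsilon$-chain of $X$ is totally different from every $\epsilon$-chain of $Y$, and that two chains are totally different precisely when no link of one equals any link of the other. Consequently, a single shared link is enough to force an absurdity of the form $u\neq u$.

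For part (a), I would suppose toward a contradiction that $X\cap Y\neq\emptyset$ and fix $z$ with $z\in X$ and $z\in Y$. The key observation is that the membership $z\in X$ already certifies $\lch z\rch$ as an $\epsilon$-chain of $X$ — the degenerate $\epsilon$-chain whose unique (hence greatest) link $z$ lies in $X$ — and likewise $\lch z\rch$ is an $\epsilon$-chain of $Y$. Applying $X\totdif Y$ to this pair of chains, and to the links $u=z$ and $v=z$, yields $z\neq z$; hence $X\cap Y=\emptyset$.

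For part (b), I would establish the two non-membership claims symmetrically. Assuming $X\in Y$, I use that $X$ is nonempty to fix some $w\in X$. Then $w\in X\in Y$, so $\lch w,X\rch$ is an $\epsilon$-chain of $Y$ with links $w$ and $X$, while $w\in X$ makes $\lch w\rch$ an $\epsilon$-chain of $X$ with link $w$. These two chains share the link $w$, so total difference again forces $w\neq w$, and therefore $X\notin Y$. The claim $Y\notin X$ follows by the same construction with the roles of $X$ and $Y$ exchanged (using that $Y$ is nonempty), or simply by noting that the relation $\totdif$ on chains, and hence the induced relation $\totdif$ on $\sigma$-sets, is symmetric.

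The only genuinely delicate point is the admissibility of the single-link $\epsilon$-chain $\lch z\rch$: one must read Definition \ref{def 4 chain and proper chain } as permitting the case where the least and greatest links coincide, so that $w\in X$ alone witnesses $\lch w\rch\in CH(X)$. Granting this, both parts are essentially immediate. If instead one insisted on $\epsilon$-chains of length at least two, part (a) would need an extra step — descending into a nonempty common element, or ascending through a common containing $\sigma$-set — but the shared-link strategy driving the contradiction would remain unchanged.
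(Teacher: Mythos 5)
Your proof is correct and follows essentially the same route as the paper's: in both parts you exhibit a single $\sigma$-set that is a link of an $\epsilon$-chain of $X$ and of an $\epsilon$-chain of $Y$ (the degenerate chain $\lch z\rch$ for (a), and $\lch w\rch$ versus $\lch w,X\rch$ for (b)), contradicting $X\totdif Y$. The paper makes the same implicit use of single-link $\epsilon$-chains (writing $\lch a\rch\in CH_{p}(X)$), so your cautionary remark applies equally to the original argument.
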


\begin{proof}
\begin{description}
  \item[(a)] Suppose that $X\cap Y\neq\emptyset$ then there exists $a\in X\cap Y$. Therefore $\lch a\rch\in CH_{p}(X)$ and $\lch a\rch\in CH_{p}(Y)$, which is a contradiction because $X\totdif Y$.

  \item[(b)] Suppose that $X\in Y$. Since $X$ and $Y$ are nonempty then there exists a $\sigma$-element $a\in X$, so $\lch a\rch\in
CH(X)$. In the same way, $\lch a,X\rch\in CH(Y)$, which is a contradiction since $X\totdif Y$. The proof that $Y\notin X$ is
analogous.
\end{description}
\end{proof}

As already noted, in order to develop the idea of $\sigma$-antiset we consider it necessary to include the existence of at least two $\sigma$-sets, $\alpha$ and $\beta $, which have the following properties:
\begin{enumerate}
	\item $\alpha$ and $\beta$ are non-constructible from the empty $\sigma$-set,
	\item $\alpha$ and $\beta$ are totally different.
\end{enumerate}
On the other hand, we realized that in order to give a good definition of these $\sigma$-sets, they should be the most simple. Hence, it is necessary to include the linear $\epsilon$-root property. The following definitions formalize these ideas.

\begin{definition}\label{def chain ext and LR property}
Let $X$ be a nonempty $\sigma$-set. Then we say that:
\begin{enumerate}

  \item $X$ \textbf{is non-constructible from the empty $\sigma$-set} if
$$(\forall\lch x,\ldots,w\rch\in CH(X))(x\neq\emptyset).$$
That is $\lch x,\ldots,w\rch$ is an extending $\epsilon$-chain of $X$.

 \item $X$ \textbf{has the linear $\epsilon$-root property} if
$$(\forall \lch x,\ldots,w\rch\in CH(X))(\forall u)(u\inch\lch x,\ldots,w\rch\rightarrow(\exists!y)(y\in u)).$$
\end{enumerate}
\end{definition}

In order to denote the $\sigma$-sets that are non-constructible from
empty $\sigma$-set we will use the following $\sigma$-class:
$$NC(\emptyset)=\{X: X \textrm{ is non-constructible from the } \emptyset\},$$
and in order to denote the $\sigma$-sets that have the linear
$\epsilon$-root property we will use the following $\sigma$-class:
$$LR=\{X: X \textrm{ has the linear $\epsilon$-root property}\}.$$

\begin{theorem}\label{theo 5 lin-root implies non-const-empty }
Let $X$ be a nonempty $\sigma$-set. If $X\in LR$, then $X\in
NC(\emptyset)$.
\end{theorem}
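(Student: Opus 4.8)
The plan is to unwind the two relevant definitions and apply the linear $\epsilon$-root property to the single most convenient link, namely the least one. Concretely, to prove $X\in NC(\emptyset)$ I must show that every $\epsilon$-chain of $X$ has nonempty least link, i.e. $(\forall\lch x,\ldots,w\rch\in CH(X))(x\neq\emptyset)$, and I would establish this directly rather than by contradiction.

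First I would fix an arbitrary $\epsilon$-chain $\lch x,\ldots,w\rch\in CH(X)$ and single out its least link $x$. By the abbreviation governing links, $x\inch\lch x,\ldots,w\rch$, so $x$ is a link of the chain in the sense of Definition \ref{def 4 link}; hence $x$ is a legitimate instance of the universally quantified $u$ appearing in the linear $\epsilon$-root property.

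Next, since $X\in LR$, the defining hypothesis $(\forall\lch x,\ldots,w\rch\in CH(X))(\forall u)(u\inch\lch x,\ldots,w\rch\rightarrow(\exists!y)(y\in u))$ applies to our chain with $u=x$, yielding $(\exists!y)(y\in x)$, and in particular $(\exists y)(y\in x)$. Thus $x$ has at least one $\sigma$-element. By the Axiom of Empty $\sigma$-set \ref{axiom empty set} (equivalently, Definition \ref{def 1 empty set}), the empty $\sigma$-set has no $\sigma$-elements, so from $(\exists y)(y\in x)$ I conclude $x\neq\emptyset$. Since the chain was arbitrary, every $\epsilon$-chain of $X$ is an extending $\epsilon$-chain, which is precisely the statement $X\in NC(\emptyset)$.

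The argument is essentially a direct chase through the definitions, so I do not anticipate a genuine obstacle; the only point demanding care is the bookkeeping observation that the least link actually qualifies as one of the links $u$ to which the uniqueness clause applies, together with the trivial but essential weakening of $\exists!$ to $\exists$ before invoking the emptiness of $\emptyset$. One could equally phrase the whole thing as a proof by contradiction---assume some chain has least link $\emptyset$, then the root property forces $\emptyset$ to contain a unique element, contradicting the defining property of the empty $\sigma$-set---but the direct form above seems cleanest.
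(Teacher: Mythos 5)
Your proof is correct and follows essentially the same route as the paper's: take an arbitrary $\epsilon$-chain of $X$, apply the linear $\epsilon$-root property to its least link to obtain a (unique, hence at least one) $\sigma$-element of it, and conclude the least link is nonempty. The paper's version is just a terser rendering of the same definition-chase.
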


\begin{proof}
Consider $X\in LR$ and $x,\ldots,w$ $\sigma$-sets such that $\lch x,\ldots,w\rch\in CH(X)$. Since $X\in
LR$ then there exists a unique $\sigma$-set $y$ such that $y\in
 x$. Therefore $x\neq\emptyset$. Finally $X\in NC(\emptyset)$.
\end{proof}

\begin{theorem}\label{theo 4 extending chain}
Let $X$ be a $\sigma$-set. If $X\in NC(\emptyset)$, then for all
$\lch x,\ldots,w\rch\in CH(X)$, there exists a nonempty $\sigma$-set
$y$ such that $\lch y,x,\ldots,w\rch\in CH(X)$.
\end{theorem}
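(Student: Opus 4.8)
The plan is to unfold the chain abbreviations and to apply the hypothesis $X \in NC(\emptyset)$ \emph{twice}: once to guarantee that the given chain's least link is nonempty (so that it has an $\epsilon$-element to prepend), and a second time, to the extended chain, in order to conclude that the newly prepended link is itself nonempty.

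First I would fix an $\epsilon$-chain $\lch x,\ldots,w\rch \in CH(X)$, which by definition unwinds to $x \in \ldots \in w \in X$ with least link $x$. Since $X \in NC(\emptyset)$, the least link of every $\epsilon$-chain of $X$ is nonempty, so applying this to $\lch x,\ldots,w\rch$ yields $x \neq \emptyset$. By the Axiom of Empty $\sigma$-set (read contrapositively) a nonempty $\sigma$-set must have a $\sigma$-element, so there is some $y$ with $y \in x$; moreover $y \in x$ witnesses $cto\,y$, so $y$ is a legitimate $\sigma$-set. Concatenating $y \in x$ with $x \in \ldots \in w \in X$ gives $y \in x \in \ldots \in w \in X$, that is $\lch y,x,\ldots,w\rch \in CH(X)$, an $\epsilon$-chain of $X$ whose least link is now $y$.

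The final step, and the only point that requires a little care, is establishing that $y$ is nonempty. I would not argue this directly from $y \in x$; instead I would observe that $\lch y,x,\ldots,w\rch$ is again an $\epsilon$-chain of $X$ and re-invoke $X \in NC(\emptyset)$: the least link of this chain, namely $y$, must be nonempty, hence $y \neq \emptyset$. This delivers the required nonempty $\sigma$-set $y$ with $\lch y,x,\ldots,w\rch \in CH(X)$. I expect no serious obstacle here; the whole argument is a double application of the non-constructibility hypothesis, and the subtlety is merely to recognize that nonemptiness of the new link is itself an instance of that hypothesis rather than something to be proved \emph{ad hoc}.
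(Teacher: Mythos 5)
Your proof is correct and is essentially identical to the paper's: both extract $y\in x$ from $x\neq\emptyset$ (which holds since $X\in NC(\emptyset)$), prepend it to form $\lch y,x,\ldots,w\rch\in CH(X)$, and then apply the non-constructibility hypothesis a second time to the extended chain to conclude $y\neq\emptyset$. The subtlety you flag at the end is exactly the closing step of the paper's own argument.
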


\begin{proof}
Let $x,\ldots,w$ $\sigma$-sets such that $\lch x,\ldots,w\rch\in CH(X)$. Since $X$ is non-constructible
from the empty $\sigma$-set then $x\neq\emptyset$. Therefore there
exists $y\in x$, so $\lch y,x,\ldots,w\rch\in CH(X)$. Finally we
obtain that $y\neq\emptyset$, because $X$ is non constructively from
the empty $\sigma$-set.
\end{proof}

\begin{corollary}\label{coro 4 x non contr impl emp notin x}
If $X\in NC(\emptyset)$, then $\emptyset\notin X$.
\end{corollary}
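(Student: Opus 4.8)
The plan is to argue by contradiction, unwinding the single-link $\epsilon$-chain notation. Suppose $X \in NC(\emptyset)$ yet $\emptyset \in X$. First I would observe that, under the convention for one-element chains already used earlier (for instance in the proof of Theorem \ref{theo 4 x totdif y impl x int y empty }, where $\lch a\rch\in CH(X)$ is read off from $a\in X$), the hypothesis $\emptyset \in X$ is exactly the assertion $\lch \emptyset \rch \in CH(X)$. In other words, $\lch \emptyset \rch$ is an $\epsilon$-chain of $X$ consisting of the single link $\emptyset$, which is simultaneously its least and greatest link.

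Next I would apply the defining property of $NC(\emptyset)$ directly. Since $X \in NC(\emptyset)$, Definition \ref{def chain ext and LR property} tells us that every $\epsilon$-chain of $X$ has a nonempty least link. Specializing this to the chain $\lch \emptyset \rch$ forces its least link $\emptyset$ to satisfy $\emptyset \neq \emptyset$, which is absurd. Hence $\emptyset \notin X$, as claimed.

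An equivalent route runs through Theorem \ref{theo 4 extending chain}: from $\lch \emptyset \rch \in CH(X)$ that theorem supplies a nonempty $\sigma$-set $y$ with $\lch y, \emptyset \rch \in CH(X)$, i.e. $y \in \emptyset$, contradicting the defining property of the empty $\sigma$-set established in Definition \ref{def 1 empty set}. I do not anticipate any genuine obstacle here; the only step requiring care is the bookkeeping that a length-one chain $\lch \emptyset \rch$ legitimately counts as an element of $CH(X)$ with least link $\emptyset$, after which either the non-constructibility hypothesis or the extending-chain theorem closes the argument in one line.
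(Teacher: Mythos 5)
Your proposal is correct and matches the paper's argument: the paper's entire proof is the one-line appeal to Theorem \ref{theo 4 extending chain} that you spell out in your second paragraph (a nonempty $y$ with $y\in\emptyset$ is absurd). Your first route, reading $\emptyset\in X$ as the chain $\lch\emptyset\rch\in CH(X)$ and invoking the definition of $NC(\emptyset)$ directly, is just an unwound version of the same observation.
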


\begin{proof}
This fact is obvious by Theorem \ref{theo 4 extending chain}.
\end{proof}

It is clear that in a standard set theory do not exist sets which have the linear $\epsilon$-root property, by the axiom of regularity. Thus, if we want to include this type of $\sigma$-sets, it is necessary to modify this axiom. 

\end{subsection}

\begin{subsection}{The Axiom of Weak Regularity.}\label{axiom of w-regularity}
For all $\sigma$-set $X$, for all $\lch x,\ldots,w\rch\in CH(X)$ we have that $X\not\inch\lch x,\ldots,w\rch$, that is
$$(\forall X)(\forall\lch x,\ldots,w\rch\in CH(X))(X\not\inch\lch x,\ldots,w\rch).$$

\begin{theorem}\label{theo 5.1 w-regularity}
Let $X$ be a $\sigma$-set. Then $X$ is not a $\sigma$-element of $X$.
\end{theorem}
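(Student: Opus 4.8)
The plan is to argue by contradiction using the Axiom of Weak Regularity (Axiom \ref{axiom of w-regularity}), which forbids $X$ from appearing as a link in any of its own $\epsilon$-chains. Suppose toward a contradiction that $X \in X$. The first step is to recognize that this single membership already furnishes the shortest possible $\epsilon$-chain of $X$: by the base case of the $CH(\hat{X})$ notation, the assertion $\lch a\rch \in CH(X)$ unwinds to $a \in X$, so taking $a = X$ gives $\lch X\rch \in CH(X)$ precisely because $X \in X$.

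Next I would identify $X$ itself as a link of this chain. By Definition \ref{def 4 link}, the links of $\lch X\rch$ are exactly the $\sigma$-sets $u$ satisfying $u \inch \lch X\rch$, and for a one-element chain this reduces to $u = X$; hence $X \inch \lch X\rch$ holds trivially. The final step is to apply the Axiom of Weak Regularity to the $\sigma$-set $X$ together with the $\epsilon$-chain $\lch X\rch \in CH(X)$: the axiom delivers $X \not\inch \lch X\rch$, in direct contradiction with the $X \inch \lch X\rch$ just established. This contradiction refutes the assumption $X \in X$, yielding $X \notin X$, which is the claim of Theorem \ref{theo 5.1 w-regularity}.

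I do not anticipate a genuine obstacle here, since the result is essentially an immediate instantiation of Weak Regularity at the shortest chain length. The only point demanding care is the bookkeeping of the $\epsilon$-chain conventions, namely confirming that the one-element chain $\lch X\rch$ belongs to $CH(X)$ exactly when $X \in X$, and that $X$ counts as its unique link in the sense of $\inch$. Once these conventions are pinned down, no further machinery is required.
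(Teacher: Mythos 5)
Your proposal is correct and is essentially the paper's own argument: assume $X\in X$, observe that this gives the one-link $\epsilon$-chain $\lch X\rch\in CH(X)$ with $X\inch\lch X\rch$, and contradict the Axiom of Weak Regularity. The only cosmetic difference is that the paper first disposes of the case $X=\emptyset$ separately, which your version renders unnecessary since $X\in X$ already forces $X$ to be nonempty.
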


\begin{proof}
Let $X$ be a $\sigma$-set. It is clear that if $X=\emptyset$ then $X\notin X$. Suppose that $X\neq\emptyset$ and $X\in X$. Then $\lch X\rch\in CH(X)$, which is a contradiction with Axiom \ref{axiom of w-regularity} (Weak Regularity).
\end{proof}

\begin{theorem}\label{theo 5.3 w-regularity}
Let $X$ and $Y$ be $\sigma$-sets. If $X$ is a $\sigma$-element of $Y$ then $Y$ is not a $\sigma$-element of $X$.
\end{theorem}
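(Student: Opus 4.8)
The plan is to argue by contradiction, in direct analogy with the proof of Theorem~\ref{theo 5.1 w-regularity}, but using a two-link $\epsilon$-chain in place of the single-link chain $\lch X\rch$. First I would take the hypothesis $X\in Y$ and, toward a contradiction, assume also that $Y\in X$. Combining these two memberships yields $X\in Y\in X$, which by the defining abbreviation for $CH$ means precisely $\lch X,Y\rch\in CH(X)$; that is, $\lch X,Y\rch$ is an $\epsilon$-chain of $X$ whose least link is $X$ and whose greatest link is $Y$.

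Next I would apply Axiom~\ref{axiom of w-regularity} (Weak Regularity) to the $\sigma$-set $X$ and to this chain. The axiom asserts $X\not\inch\lch X,Y\rch$, which unfolds to $X\neq X\wedge X\neq Y$. The first conjunct $X\neq X$ is false, so the conclusion the axiom guarantees cannot in fact hold for the chain we produced, a contradiction. Hence the assumption $Y\in X$ is untenable, and I conclude $Y\notin X$, as required. Symmetrically, one could instead form $\lch Y,X\rch\in CH(Y)$ from $Y\in X\in Y$ and apply Weak Regularity to $Y$ to obtain the false statement $Y\neq Y$; either route closes the argument.

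I do not expect a genuine obstacle, since the entire force of the statement is already packaged into Weak Regularity; the work is purely a matter of correctly assembling the chain. The only point requiring care is the bookkeeping at the level of the $\epsilon$-chain abbreviations: I must check that a length-two chain $\lch X,Y\rch\in CH(X)$ really does abbreviate $X\in Y\in X$ (so that both the given membership and the assumed one are used), and that $X$ genuinely occurs as a link of that chain, which is exactly what makes the clause $X\neq X$ appear and forces the contradiction. I would also note that no appeal to nonemptiness or to $X\neq Y$ is needed: the assumed membership $Y\in X$ by itself supplies the second link, so the contradiction is immediate in every case.
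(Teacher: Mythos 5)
Your argument is correct and is essentially identical to the paper's own proof: assume $X\in Y$ and $Y\in X$, form $\lch X,Y\rch\in CH(X)$, and contradict Axiom~\ref{axiom of w-regularity} because $X\inch\lch X,Y\rch$. The extra unfolding of the abbreviations is just a more explicit rendering of the same one-line contradiction.
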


\begin{proof}
Consider $X$ and $Y$ $\sigma$-sets such that $X\in Y$ and $Y\in X$. Then we can obtain $\lch X,Y\rch\in CH(X)$, which is a contradiction with Axiom \ref{axiom of w-regularity} (Weak Regularity).
\end{proof}

\begin{definition}\label{def singleton}
Let $X$ be a $\sigma$-set. We say that $X$ is a \textbf{singleton} if there exists a unique $\sigma$-set $x$ such that $x\in X$.
\end{definition}

In order to denote the singleton we will use the following $\sigma$-class:
$$SG=\{X: X \textrm{ is a singleton}\}.$$

We will introduce the notion of a $\sigma$-subclass of $\hat{X}$. We say that $\hat{Y}$ is a $\sigma$-subclass of a $\sigma$-class $\hat{X}$ if for all $x\in\hat{Y}$ we have $x\in \hat{X}$ (i.e. $\hat{Y}\subseteq \hat{X}$). It is clear that $\hat{X}=\hat{Y}$ if and only if $\hat{X}\subseteq \hat{Y}$ and $\hat{Y}\subseteq \hat{X}$.

We notice that, if we look at how natural numbers are built we have
$$1=\{\alpha\}, \ 2=\{\alpha,1\}, \ 3=\{\alpha,1,2\}, \ 4=\{\alpha,1,2,3\},\ldots $$
and 
$$1^{\st}=\{\beta\}, \ 2^{\st}=\{\beta,1^{\st}\}, \ 3^{\st}=\{\beta,1^{\st},2^{\st}\}, \ 4^{\st}=\{\beta,1^{\st},2^{\st},3^{\st}\},  \ldots. $$
Hence, 
$$\alpha\in 1, \ \alpha\in 2, \ \alpha\in3, \ \alpha\in4,\ldots $$
and 
$$\beta\in 1^{\st}, \ \beta\in 2^{\st}, \ \beta\in 3^{\st}, \ \beta\in 4^{\st},\ldots. $$
Thus, a common property of 1, 2, 3,..., is that $\alpha$ is a $\sigma$-element. On the other hand we have that a common property of $1^{\st}$, $2^{\st}$, $3^{\st}$,..., is that $\beta$ is a $\sigma$-element. This fact leads us to the following definition.

\begin{definition}\label{def 6 min max}
Let $\hat{X}$ be a $\sigma$-class. We define:
\begin{description}
    \item[(a)] Given $y\in \hat{X}$, we say that \textbf{ $y$ is an $\epsilon$-minimal
    $\sigma$-element of $\hat{X}$} if for all $\lch x,\ldots,w\rch\in CH(y)$ we have that $x,\ldots,w\notin \hat{X}$.

    \item[(b)] The \textbf{$\epsilon$-minimal $\sigma$-subclass of $\hat{X}$} is
$$\min(\hat{X})=\{y\in \hat{X}: y \textrm{ is an $\epsilon$-minimal $\sigma$-element of } \hat{X} \}.$$

    \item[(c)] Given $y\in \hat{X}$, we say that \textbf{ $y$ is an $\epsilon$-maximal
    $\sigma$-element of $\hat{X}$} if for all $z\in \hat{X}$ and for all $\lch x,\ldots,w\rch\in CH(z)$ we have that $y\not\inch\lch x,\ldots,w\rch$.

    \item[(d)] The \textbf{$\epsilon$-maximal $\sigma$-subclass of $\hat{X}$} is
$$\max(\hat{X})=\{y\in \hat{X}: y \textrm{ is an $\epsilon$-maximal $\sigma$-element of } \hat{X} \}.$$

\end{description}
\end{definition}

We see that the concepts of maximum and minimum are related to the concept of bounded set. Following these ideas, in our work the concepts of  $\epsilon$-maximal and $\epsilon$-minimal are related to the concept of infinite $\sigma$-set.

We observe that:

\

\hspace{-0.4cm}$y\in \min(\hat{X})\leftrightarrow(y\in \hat{X})\wedge(\forall \lch x,\ldots,w\rch\in CH(y))(x,\ldots,w\notin \hat{X});$\\
$y\notin \min(\hat{X})\leftrightarrow(y\notin \hat{X})\vee(\exists\lch x,\ldots,w\rch\in CH(y))(x\in \hat{X}\vee\ldots\vee w\in \hat{X});$\\
$y\in \max(\hat{X})\leftrightarrow(y\in \hat{X})\wedge(\forall z\in \hat{X})(\forall\lch x,\ldots,w\rch\in CH(z))(y\not\inch\lch x,\ldots,w\rch);$\\
$y\notin \max(\hat{X})\leftrightarrow(y\notin \hat{X})\vee(\exists z\in \hat{X})(\exists\lch x,\ldots,w\rch\in CH(z))(y\inch\lch x,\ldots,w\rch).$

\

It is clear that if $X$ is a $\sigma$-set then $\min(X)$ and $\max(X)$ are $\sigma$-subsets of $X$.

If we want to include the natural numbers, it is clear that we have to add an axiom which allows the existence of this $\sigma$-set. On the other hand, since in this theory there exist $\sigma$-sets which have the linear $\epsilon$-root property, we must modify the axiom of infinity. Then, we present the following axiom.

\end{subsection}

\begin{subsection}{The Axiom of non $\epsilon$-Bounded $\sigma$-Set.}\label{axiom non bounded set}
There exists a non $\epsilon$-bounded $\sigma$-set, that is
$$(\exists X)(\exists y)[(y\in X)\wedge(\min(X)=\emptyset\vee\max(X)=\emptyset)].$$

\begin{definition}\label{def e-bounded set}
Let $X$ be a nonempty $\sigma$-set. We say that:
\begin{enumerate}
	\item $X$ is lower $\epsilon$-bounded if and only if $\min(X)\neq\emptyset$;
	\item $X$ is upper $\epsilon$-bounded if and only if $\max(X)\neq\emptyset$;
	\item $X$ is $\epsilon$-bounded if and only if $X$ is a lower $\epsilon$-bounded and upper $\epsilon$-bounded.
\end{enumerate}
\end{definition}

Also, we say that a $\sigma$-set $X$ is $\epsilon$-finite if it is $\epsilon$-bounded. We say that a $\sigma$-set $X$ is $\epsilon$-infinite if it is non $\epsilon$-bounded. Hence in order to denote the $\sigma$-class of all $\epsilon$-infinite $\sigma$-sets we will use the following notation:
$$IF=\{X: \min(X)=\emptyset\vee\max(X)=\emptyset\},$$
and in order to denote the $\sigma$-class of all $\epsilon$-finite $\sigma$-sets we will use
$$FN=\{X: \min(X)\neq\emptyset\wedge\max(X)\neq\emptyset\}.$$

\begin{lemma}\label{lemma 6 singleton min}
Let $X$ be a $\sigma$-set. If $X\in SG$, then the following statement holds:
\begin{description}
  \item[(a)] $X$ is $\epsilon$-finite (i.e. $X\in FN$).
  \item[(b)] $\min(X)=\max(X)=X$.
\end{description}
\end{lemma}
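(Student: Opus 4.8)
The plan is to prove part (b) first and then obtain (a) as an immediate consequence. Since $X\in SG$, by Definition \ref{def singleton} there is a unique $\sigma$-set $x$ with $x\in X$; in particular $X\neq\emptyset$ and $x$ is the \emph{only} $\sigma$-element of $X$. Because $\min(X)\subseteq X$ and $\max(X)\subseteq X$ by Definition \ref{def 6 min max}, the Axiom \ref{axiom of extensionality} (Extensionality) reduces the desired equalities $\min(X)=X$ and $\max(X)=X$ to the two single membership facts $x\in\min(X)$ and $x\in\max(X)$: once $x$ lies in each subclass, that subclass contains every $\sigma$-element of $X$ and hence coincides with $X$.

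For $x\in\min(X)$ I would use the characterization $y\in\min(\hat X)\leftrightarrow(y\in\hat X)\wedge(\forall\lch a,\ldots,w\rch\in CH(y))(a,\ldots,w\notin\hat X)$. Taking $y=x$, the first conjunct holds, and since $x$ is the only $\sigma$-element of $X$, the condition $a,\ldots,w\notin X$ is equivalent to the requirement that no link of the $\epsilon$-chain equals $x$. This is exactly what Axiom \ref{axiom of w-regularity} (Weak Regularity) supplies when applied to the $\sigma$-set $x$: for every $\lch a,\ldots,w\rch\in CH(x)$ we have $x\not\inch\lch a,\ldots,w\rch$, i.e.\ $a\neq x,\ldots,w\neq x$. (When $x=\emptyset$ there are no such $\epsilon$-chains, so the condition is vacuously satisfied.) Hence $x\in\min(X)$.

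The argument for $x\in\max(X)$ runs in parallel. By Definition \ref{def 6 min max}, $x\in\max(X)$ iff $x\in X$ and for every $z\in X$ and every $\lch a,\ldots,w\rch\in CH(z)$ one has $x\not\inch\lch a,\ldots,w\rch$. Since $x$ is the unique $\sigma$-element of $X$, the quantifier $\forall z\in X$ collapses to the single case $z=x$, and the requirement becomes precisely that every $\epsilon$-chain of $x$ avoids $x$ as a link, which is again Weak Regularity applied to $x$. Thus $x\in\max(X)$, and combining the two memberships gives $\min(X)=\max(X)=X$, establishing (b).

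Finally, (a) follows at once from (b): we have $\min(X)=X\neq\emptyset$ and $\max(X)=X\neq\emptyset$, so by Definition \ref{def e-bounded set} the $\sigma$-set $X$ is lower and upper $\epsilon$-bounded, hence $\epsilon$-bounded, i.e.\ $\epsilon$-finite, giving $X\in FN$. I expect the only genuinely delicate point to be the bookkeeping step of applying Weak Regularity to the \emph{element} $x$ rather than to $X$ itself; once the singleton structure collapses both the $\min$ and the $\max$ condition to assertions about the $\epsilon$-chains of $x$, the axiom delivers them directly and the rest is routine.
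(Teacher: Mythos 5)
Your proof is correct and rests on exactly the same key observation as the paper's: applying Axiom \ref{axiom of w-regularity} (Weak Regularity) to the unique $\sigma$-element $x$ itself, so that no $\epsilon$-chain of $x$ can contain $x$ as a link, which is precisely what the singleton structure reduces both the $\min$ and $\max$ conditions to. The only difference is organizational — you establish (b) directly and deduce (a), whereas the paper proves (a) first by contradiction and then uses the nonemptiness of $\min(X)$ and $\max(X)$ to get (b) — but the underlying argument is the same.
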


\begin{proof} We consider $X=\{x\}$.
\begin{description}
  \item[(a)] Suppose that $\min(X)=\emptyset$. Therefore $x\not\in\min(X)$. Thus there exists $\lch y,\ldots, w\rch\in CH(x)$ such that $y\in X\vee\ldots\vee w\in X$. In consequence there exists $u\in\lch y,\ldots,w\rch$ such that $u\in X$. Since $X=\{x\}$ we have $u=x$. Finally, there exists $\lch y,\ldots, w\rch\in CH(x)$ such that $x\in\lch y,\ldots,w\rch$ which is a contradiction with Axiom \ref{axiom of w-regularity} (Weak Regularity). Hence $min(X)\neq\emptyset$.

  If $\max(X)=\emptyset$ then $x\notin\max(X)$. Therefore, there exist $z\in X$ and $\lch y,\ldots,w\rch\in CH(z)$ such that $x\inch\lch y,\ldots,w\rch$. Since $X=\{x\}$ we have $z=x$. So there exists $\lch y,\ldots,w\rch\in CH(x)$ such that $x\in\lch y,\ldots,w\rch$ which is a contradiction with Axiom \ref{axiom of w-regularity} (Weak Regularity). Thus $max(X)\neq\emptyset$.

  \item[(b)] (b.1) We prove that $\min(X)=X$. It is clear, from Definition \ref{def 6 min max} that $\min(X)\subseteq X$. Since $\min(X)\neq\emptyset$ then there exists $y\in\min(X)$, so $y\in X$. Therefore $y=x$ because $X=\{x\}$. Thus $\min(X)=X$.

(b.2) We prove that $\max(X)=X$. It is clear, from Definition \ref{def 6 min max} that $\max(X)\subseteq X$. Since $\max(X)\neq\emptyset$ then there exists $y\in\max(X)$, so $y\in X$. Therefore $y=x$ because $X=\{x\}$. Thus $\max(X)=X$.

Finally by the Axiom \ref{axiom of extensionality} (Extensionality) we have that $\min(X)=\max(X)=X$.
\end{description}
\end{proof}

So far, we have talked about the $\sigma$-sets $1=\{\alpha\}$ and $1^{\st}=\{\beta\}$ but we have not formalized the axioms that allow the existence of these $\sigma$-sets. One of the first axioms, presented in previous versions of this theory, was as follows:
\begin{itemize}
	\item There exist two $\sigma$-sets X and Y such that X and Y contain a unique $\sigma$-element, X is totally different from Y and they have the linear $\epsilon$-root property, that is
	$$(\exists X,Y)(\exists ! x,y)[(x\in X)\wedge(y\in Y)\wedge(X\totdif Y)\wedge(X,Y\in LR)].$$
\end{itemize}
Thus, using this axiom we define the $\sigma$-sets $1=\{\alpha\}$ and $1^{\st}=\{\beta\}$. However, thanks to some observations we realized that this axiom has the following weaknesses: With respect to the $\sigma$-sets $1$ and $1^{\st}$ are by no means uniquely determined by the properties formulated in the axiom. The reasons are
\begin{enumerate}
	\item that they are not a priori distinguishable from each other and
	\item that they could both be replaced by any member of an $\epsilon$-chain below them.
\end{enumerate}
This problem \ motivated the study of \ set theories of \ Zermelo-Fraenkel, Neumann-Bernays-Godel and Morse-Kelley. Thus the generic object of this theory is called $\sigma$-class, which allows us to modify the structure of the axioms in order to consider the existence of the $\sigma$-sets 1 and $1^{\st}$. In order to solve this problem, we added the axioms (Weak Choice) and (Linear $\sigma$-Set). Also, we changed the axiom (One and One$^{*}$ $\sigma$-Sets). In this way, we can select the $\sigma$-sets $1=\{\alpha\}$ and $1^{\st}=\{\beta\}$. Thus, these objects are uniquely determined by the axiom system.

\end{subsection}

\begin{subsection}{The Axiom of Weak Choice.}\label{axiom of weak choice}
If $\hat{X}$ is a $\sigma$-class of $\sigma$-sets, then we can choose a singleton $Y$ whose unique $\sigma$-element come from $\hat{X}$, that is
$$(\forall\hat{X})(\forall x)(x\in\hat{X}\rightarrow (\exists Y)(Y=\{x\})).$$

This axiom guarantees the existence of $1$ and $1^{\st}$ $\sigma$-sets, which will be useful when building the $\sigma$-antielements and $\sigma$-antisets.
\end{subsection}

\begin{subsection}{The Axiom of $\epsilon$-Linear $\sigma$-set.}\label{axiom e-linear set}
There exists a nonempty $\sigma$-set $X$ which has the linear $\epsilon$-root property, that is
$$(\exists X)(\exists y)(y\in X\wedge X\in LR).$$

\begin{definition}\label{def e-linear set}
Let $X$ be a nonempty $\sigma$-set. Then $X$ is called a \textbf{$\epsilon$-linear $\sigma$-set}, if $X$ has the linear $\epsilon$-root property (i.e. $X\in LR$).
\end{definition}

By Axiom \ref{axiom e-linear set} there exists $X\in LR$. Then the $\sigma$-class $LK(X)$ of all link of $\epsilon$-chains of $X$ do not have an $\epsilon$-minimal element.
$$LK(X)=\{u: u\inch\lch x,\ldots,w\rch \textrm{ for some }\lch x,\ldots,w\rch\in CH(X) \}$$
In fact, suppose that there exists $y\in\min(LK(X))$. Thus $y\in LK(X)$ and for all $\lch x,\ldots,w\rch\in CH(y)$ we have that $x,\ldots,w\notin LK(X)$, in particular for all $z\in y$ we have that $z\notin LK(X)$, which is a contradiction. Hence $\min(LK(X))=\emptyset$. We can consider $LK(X)\in IF$, so $LK(X)$ is a $\epsilon$-infinite $\sigma$-set. The existence of $\sigma$-set $LK(X)$ is guaranteed by the Axiom \ref{axiom non bounded set} (non $\epsilon$-Bounded $\sigma$-Set). Also, we notice that $X\notin LK(X)$ by the Axiom \ref{axiom of w-regularity} (Weak Regularity).

Also, it is important to observe that if $X\in LR$ then we can not declare that $X\in SG$, because if $\lch x,\ldots,w\rch\in CH(X)$ then $X\not\inch\lch x,\ldots,w\rch$ by the Axiom \ref{axiom of w-regularity} (Weak Regularity). In the example \ref{example 7 fusion of pairs} we will see that the $\sigma$-set $1_{\Gamma}$ has the linear $\epsilon$-root property but it is not a singleton. Also, in the same example we will see that the $\sigma$-set $X=\{2\}$ is a singleton but $X$ does not have the linear $\epsilon$-root property. However, if $X\in LR$ and $y\in LK(X)$ then $y\in SG\cap LR$.

\begin{lemma}\label{lemma if X LR then link SG-LR}
Let $X$ be a nonempty $\sigma$-set such that $X\in LR$. If $y\in LK(X)$, then $y\in SG\cap LR$.
\end{lemma}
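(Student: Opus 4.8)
The plan is to prove the two assertions $y\in SG$ and $y\in LR$ separately, in both cases exploiting the simple fact that any link of an $\epsilon$-chain of $X$ can be spliced into \emph{longer} $\epsilon$-chains of $X$, on which the hypothesis $X\in LR$ may then be applied.

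First I would establish $y\in SG$. Since $y\in LK(X)$, by the very definition of $LK(X)$ there is an $\epsilon$-chain $\lch x,\ldots,w\rch\in CH(X)$ with $y\inch\lch x,\ldots,w\rch$; that is, $y$ is a link of a chain of $X$. Because $X\in LR$, the linear $\epsilon$-root property applied to this chain and to the link $y$ yields $(\exists! z)(z\in y)$, which is precisely the statement that $y$ is a singleton. Hence $y\in SG$.

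Next I would prove $y\in LR$, i.e.\ that for every $\epsilon$-chain $\lch a,\ldots,c\rch\in CH(y)$ and every link $u$ of it one has $(\exists! z)(z\in u)$. The key step is a concatenation argument. Keeping the chain $\lch x,\ldots,w\rch\in CH(X)$ with $y$ among its links, the portion of that chain from $y$ upward gives a sub-chain $y\in\cdots\in w\in X$ (in the boundary case where $y$ is the greatest link $w$, this degenerates to $y\in X$). Now take an arbitrary $\lch a,\ldots,c\rch\in CH(y)$, so that $a\in\cdots\in c\in y$, and glue the two chains together at $y$: this produces the membership chain $a\in\cdots\in c\in y\in\cdots\in w\in X$, which is again an $\epsilon$-chain of $X$, containing $a,\ldots,c$ among its links. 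Applying $X\in LR$ to this enlarged chain shows that each of these links is a singleton; in particular every link $u$ of $\lch a,\ldots,c\rch$ satisfies $(\exists! z)(z\in u)$. Since $\lch a,\ldots,c\rch\in CH(y)$ was arbitrary, this gives $y\in LR$, and combined with the first part we conclude $y\in SG\cap LR$.

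The main obstacle I anticipate is purely the bookkeeping in the concatenation step: one must check that splicing a chain lying below $y$ onto the tail of the $X$-chain lying above $y$ genuinely yields a valid membership chain terminating in $X$, and one must treat separately the degenerate case in which $y$ is the greatest link, so that $y\in X$ directly and the upward tail is empty. Once the gluing is justified, the conclusion follows immediately by reading off the linear $\epsilon$-root property of $X$ on the longer chain.
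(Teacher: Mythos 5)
Your proposal is correct and follows essentially the same route as the paper: first reading off $y\in SG$ from the linear $\epsilon$-root property of $X$ applied to a chain having $y$ as a link, and then showing $y\in LR$ by observing that any chain below $y$ extends to a chain of $X$ (the paper phrases this as ``$x,\ldots,w\in LK(X)$'' rather than as an explicit concatenation, but it is the same splicing argument). Your explicit treatment of the gluing step and of the degenerate case where $y$ is the greatest link is merely a more careful write-up of what the paper asserts in one line.
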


\begin{proof}
Let $X$ be a nonempty $\sigma$-set such that $X\in LR$. Since $X\neq\emptyset$ then $LK(X)\neq\emptyset$. So we consider $y\in LK(X)$. By Definition \ref{def chain ext and LR property} we have that $y\in SG$. Let $\lch x,\ldots,w\rch\in CH(y)$ and $u\inch\lch x,\ldots,w\rch$. Since $y\in LK(X)$ then $x,\ldots,w\in LK(X)$. Therefore $u\in LK(X)$ in consequence $u\in SG$. So $y\in SG\cap LR$.
\end{proof}

Let $X$ be a $\sigma$-set, $X$ is called $\epsilon$-linear singleton if $X\in SG\cap LR$.

\end{subsection}

\begin{subsection}{The Axiom of totally different $\sigma$-sets.}\label{axiom one set}
For all $\epsilon$-linear singleton, there exists a $\epsilon$-linear singleton $Y$ such that $X$ is totally different from $Y$, that is
$$(\forall X\in SG\cap LR)(\exists Y\in SG\cap LR)(X\totdif Y).$$

Let $X$ be a $\sigma$-set, in order to denote the $\sigma$-class of all $\sigma$-sets totally different to $X$ we will use the following notation:
$$TD(X)=\{Y:Y\totdif X\}.$$

We introduce the concept of One and One$^{\st}$ $\sigma$-sets. We consider the following $\sigma$-class
$$SG\cap LR=\{X:X \textrm{ is a singleton and has the linear $\epsilon$-root property }\}.$$
By Axiom \ref{axiom e-linear set} ($\epsilon$-Linear $\sigma$-Set) it is clear that $SG\cap LR\neq\Theta$. Thus, by the Axiom \ref{axiom of weak choice} (Weak Choice) we can choose a $\sigma$-set $1$ whose unique $\sigma$-element come from the $\sigma$-class $SG\cap LR$. So, we can define the One $\sigma$-set. The unique $\sigma$-element of $1$ is denoted by $\alpha$, hence
$$1=\{\alpha\}.$$
It is clear that $1,\alpha\in SG\cap LR$. Following the same argument, we consider the $\sigma$-class
$$TD(\alpha)=\{Y:Y\totdif\alpha\}.$$
Since $\alpha\in SG\cap LR$ by the Axiom \ref{axiom one set} (Totally Different $\sigma$-Sets) we obtain that $TD(\alpha)\neq\Theta$. Hence by Axiom \ref{axiom of weak choice} (Weak Choice) we choose a $\sigma$-set $1^{\st}$ whose unique $\sigma$-element comes from the $\sigma$-class $TD(\alpha)$. Thus, we can define the One$^{\st}$ $\sigma$-set. The unique $\sigma$-element of $1^{\st}$ is denoted for $\beta$, so
$$1^{\st}=\{\beta\}.$$
It is clear that $1^{\st},\beta\in SG\cap LR$. Hence we obtain that $1,1^{\st},\alpha,\beta\in SG\cap LR$.

We present the basic properties that the $\sigma$-sets $1$ and $1^{\st}$ have,

\begin{itemize}
  \item $1$ and $1^{\st}$ are unique;
	\item $1\totdif 1^{\st}\wedge 1\totdif \beta \wedge \alpha\totdif \beta \wedge \alpha\totdif 1^{\st};$
	\item $\min(\alpha)=\alpha\wedge \min(\beta)=\beta;$
  \item $\min(1)=1\wedge \min(1^{\st})=1^{\st}.$
\end{itemize}

Since fusion of pairs can satisfy the conditions (a) and (b) of Axiom \ref{axiom of pairs} then we must precise when the fusion of pairs satisfies one or the other condition. To this end we give the Axioms of Completeness (A) and (B).

Let us introduce the abbreviations
\begin{itemize}
	\item $\min(X,Y)=|A\wedge B|:=(\min(X)=A\wedge \min(Y)= B);$
	\item $\min(X,Y)\neq|A\wedge B|:=(\min(X)\neq A\wedge \min(Y)\neq B);$
	\item $\min(X,Y)=|A\vee B|:=(\min(X)= A\vee \min(Y)= B);$
	\item $\min(X,Y)\neq|A\vee B|:=(\min(X)\neq A\vee \min(Y)\neq B).$
\end{itemize}

So far, we have included the axioms that we consider necessary in order to build the $\sigma$-sets $1=\{\alpha\}$ and $1^{\st}=\{\beta\}$. Thus we have that $1$ and $1^{\st}$ generate natural $\sigma$-sets where the first natural elements should have the following form:
$$1=\{\alpha\}, \ 2=\{\alpha,1\}, \ 3=\{\alpha,1,2\}, \ 4=\{\alpha,1,2,3\},\ldots $$
and 
$$1^{\st}=\{\beta\}, \ 2^{\st}=\{\beta,1^{\st}\}, \ 3^{\st}=\{\beta,1^{\st},2^{\st}\}, \ 4^{\st}=\{\beta,1^{\st},2^{\st},3^{\st}\},  \ldots. $$
If we take account these $\sigma$-sets, it is expected that 
\begin{center}
\begin{tabular}{llll}
$\{1\}\cup\{1^{\st}\}=\emptyset,$ & $\{1\}\cup\{2^{\st}\}=\{1,2^{\st}\},$ & $\{1\}\cup\{3^{\st}\}=\{1,3^{\st}\},$ & $\ldots $\\
$\{2\}\cup\{2^{\st}\}=\emptyset,$ & $\{2\}\cup\{3^{\st}\}=\{2,3^{\st}\},$ & $\{2\}\cup\{4^{\st}\}=\{2,4^{\st}\},$ & $\ldots $\\
$\{3\}\cup\{3^{\st}\}=\emptyset,$ & $\{3\}\cup\{4^{\st}\}=\{3,4^{\st}\},$ & $\{3\}\cup\{5^{\st}\}=\{3,5^{\st}\},$ & $\ldots $\\
$ \hspace{1.8cm} \vdots$ & $\hspace{1.8cm} \vdots$ & $\hspace{1.8cm} \vdots$ & $\ddots$\\
\end{tabular}
\end{center}

Thus it is necessary to include the axioms that allow these facts. In this sense, it is important to note that these axioms are developed from the construction of the natural $\sigma$-sets $\mathbb{N}$ and $\mathbb{N}^{\st}$. So, the $\sigma$-Set Theory is incomplete, because we have not consider the development of the $\sigma$-sets $\mathbb{R}$ and $\mathbb{R}^{\st}$. However, when we define the $\sigma$-sets $\mathbb{R}$ and $\mathbb{R}^{\st}$, it is sufficient to complete these axioms to obtain for example:
$$\{1/2\}\cup\{(1/2)^{\st}\}=\emptyset, \ \{\sqrt{2}\}\cup\{\sqrt{2}^{\st}\}=\emptyset, \ \{\pi\}\cup\{\pi^{\st}\}=\emptyset.$$
For now we will concentrate on the construction of the natural $\sigma$-sets. 

We define the formula
$$\Psi(z,w,a,x):=(\exists ! w)(\{z\}\cup\{w\}=\emptyset) \wedge (\forall a)(\{z\}\cup\{a\}=\emptyset\rightarrow a\in x).$$

\end{subsection}

\begin{subsection}{The Axiom of Completeness (A).}\label{axiom of completeness a}
If $X$ and $Y$ are $\sigma$-sets, then
$$\{X\}\cup\{Y\}=\{X,Y\},$$
if and only if $X$ and $Y$ satisfy one of the following conditions:
\begin{description}
  \item[(a)] $\min(X,Y)\neq |1\vee 1^{\st}|\wedge \min(X,Y)\neq |1^{\st}\vee 1|.$
  \item[(b)] $\neg(X\totdif Y).$
  \item[(c)] $(\exists z\in X)[z\notin \min(X)\wedge \neg \Psi(z,w,a,Y)].$
  \item[(d)] $(\exists z\in Y)[z\notin \min(Y)\wedge \neg \Psi(z,w,a,X)].$
\end{description}

\begin{lemma}\label{lemma 7 fusion of pairs x to x singleton}
Let $X$ be a $\sigma$-set. Then $\{X\}\cup\{X\}=\{X\}$.
\end{lemma}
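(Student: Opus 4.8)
The plan is to reduce the claim to the Axiom of Completeness (A) applied to the pair $X,X$, show that its condition (a) holds unconditionally, and then observe that $\{X,X\}=\{X\}$. First I would record the one external fact the argument needs, namely that $1\neq 1^{\st}$. This follows from the listed basic property $1\totdif 1^{\st}$ together with Theorem~\ref{theo 4 x totdif y impl x int y empty }: two equal nonempty $\sigma$-sets cannot be disjoint, so totally different nonempty $\sigma$-sets are in particular distinct. (Alternatively, $\alpha\in 1$ and $\beta\in 1^{\st}$ with $\alpha\totdif\beta$ forces $\alpha\neq\beta$, hence $\{\alpha\}\neq\{\beta\}$.)

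Next I would invoke the Axiom of Pairs~\ref{axiom of pairs}: the fusion of pairs $\{X\}\cup\{X\}$ exists and satisfies exactly one of its conditions (a) or (b). By the Axiom of Completeness (A)~\ref{axiom of completeness a}, we have $\{X\}\cup\{X\}=\{X,X\}$ if and only if the pair $X,X$ satisfies at least one of the conditions (a)--(d) listed there, so it suffices to verify that condition (a) always holds for the pair $X,X$. Unwinding the abbreviations for $\min(X,X)$, condition (a) reads
$$(\min(X)\neq 1 \vee \min(X)\neq 1^{\st})\wedge(\min(X)\neq 1^{\st}\vee \min(X)\neq 1).$$
Since $\min(X)$ is a single $\sigma$-class and $1\neq 1^{\st}$, the $\sigma$-class $\min(X)$ cannot equal both $1$ and $1^{\st}$; hence each of the two disjunctions is satisfied whatever the value of $\min(X)$ is, including the degenerate case $X=\emptyset$ where $\min(X)=\emptyset$. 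Thus condition (a) holds automatically.

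Consequently the Axiom of Completeness (A) gives $\{X\}\cup\{X\}=\{X,X\}$. Finally I would unwind the notation $\{X,X\}$, namely $(\forall W)(W\in\{X,X\}\leftrightarrow W=X\vee W=X)$, which collapses to $(\forall W)(W\in\{X,X\}\leftrightarrow W=X)$; by the Axiom of Extensionality~\ref{axiom of extensionality} this is exactly $\{X\}$, so $\{X,X\}=\{X\}$ and therefore $\{X\}\cup\{X\}=\{X\}$.

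The only genuinely delicate point is checking that condition (a) cannot fail, and this rests entirely on the distinctness $1\neq 1^{\st}$; once that is in hand, the remainder is a direct unwinding of the definitions, so I expect no substantive obstacle.
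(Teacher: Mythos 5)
Your proof is correct, but it takes a different route from the paper's. The paper splits into two cases: for $X=\emptyset$ it verifies condition (a) of Axiom \ref{axiom of completeness a} (using $\min(\emptyset)=\emptyset\neq 1$ and $\neq 1^{\st}$), while for $X\neq\emptyset$ it switches to condition (b), observing that a nonempty $X$ is not totally different from itself. You instead verify condition (a) uniformly for every $X$, noting that under the paper's abbreviation $\min(X,X)\neq|1\vee 1^{\st}|$ unwinds to the disjunction $(\min(X)\neq 1)\vee(\min(X)\neq 1^{\st})$, which can only fail if $\min(X)$ equals both $1$ and $1^{\st}$ simultaneously — impossible since $1\neq 1^{\st}$ (which you correctly extract from $1\totdif 1^{\st}$ via Theorem \ref{theo 4 x totdif y impl x int y empty }). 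Your reading of the disjunctive abbreviation is the right one (it makes condition (a) of Completeness (A) exactly the negation of condition (a) of Completeness (B), and it is the reading the paper itself uses in Theorem \ref{theo 7 fusion of pairs}). What your approach buys is the elimination of the case split and of any appeal to the notion of totally different $\sigma$-sets beyond the single fact $1\neq 1^{\st}$; what the paper's approach buys is that each case is verified against a condition that holds in a "strong" form (both disjuncts true, resp.\ an outright failure of $X\totdif X$), without leaning on the precise parsing of the $\min(X,Y)$ notation. Both are sound; the final step $\{X,X\}=\{X\}$ via Extensionality is handled the same way in each.
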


\begin{proof}
\
\begin{description}
  \item[(a)] If $X=\emptyset$, then $\min(X)=\emptyset$. Therefore $\min(\emptyset)\neq 1$ and $\min(\emptyset)\neq 1^{\st}$. Hence $X=\emptyset$ satisfies the condition (a) of Axiom \ref{axiom of completeness a} (Completeness A). In consequence $\{\emptyset\}\cup\{\emptyset\}=\{\emptyset\}.$

  \item[(b)] If $X\neq\emptyset$ then it is clear that $X$ is not totally
different from $X$, then $X$ satisfies the condition (b) of Axiom \ref{axiom of completeness a}
(Completeness A). Thus $\{X\}\cup\{X\}=\{X\}$.
\end{description}
\end{proof}
From now on, we will denote by $1_{\Theta}$ the $\sigma$-set whose only element is $\emptyset$.

\begin{theorem}\label{theo 7 fusion of pairs}
If $X$ is a $\sigma$-set, then
\begin{description}
  \item[(a)] $\{\emptyset\}\cup\{X\}=\{\emptyset,X\}.$
  \item[(b)] $\{\alpha\}\cup\{X\}=\{\alpha,X\}.$
  \item[(c)] $\{\beta\}\cup\{X\}=\{\beta,X\}.$
\end{description}
\end{theorem}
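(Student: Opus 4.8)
The plan is to obtain all three equalities as instances of the biconditional in Axiom~\ref{axiom of completeness a} (Completeness A): to prove $\{Z\}\cup\{X\}=\{Z,X\}$ for $Z$ equal to $\emptyset$, $\alpha$ or $\beta$, it suffices to check that the pair $Z,X$ satisfies one of the conditions (a)--(d) of that axiom. I claim that condition (a) always holds, and for a reason depending only on the first coordinate $Z$: in each of the three cases $\min(Z)$ is distinct from both $1$ and $1^{\st}$, which alone forces condition (a).

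First I would compute $\min(Z)$ in each case. Since $\emptyset$ has no $\sigma$-elements, $\min(\emptyset)=\emptyset$; and since $\alpha,\beta\in SG\cap LR$ are singletons, Lemma~\ref{lemma 6 singleton min} gives $\min(\alpha)=\alpha$ and $\min(\beta)=\beta$. The next step is to verify that each of these minima differs from both $1$ and $1^{\st}$. For $\emptyset$ this is immediate, as $1=\{\alpha\}$ and $1^{\st}=\{\beta\}$ are nonempty. For $\alpha$: one has $\alpha\neq 1=\{\alpha\}$, since otherwise $\alpha\in\alpha$, contradicting Theorem~\ref{theo 5.1 w-regularity}; and $\alpha\neq 1^{\st}$, because $\alpha\totdif 1^{\st}$ is among the recorded properties of these $\sigma$-sets, while Theorem~\ref{theo 4 x totdif y impl x int y empty}(a) forbids a nonempty $\sigma$-set from being totally different from itself. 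The case of $\beta$ is symmetric, using $1\totdif\beta$ to get $\beta\neq 1$ and Theorem~\ref{theo 5.1 w-regularity} to get $\beta\neq 1^{\st}=\{\beta\}$.

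With $\min(Z)\neq 1$ and $\min(Z)\neq 1^{\st}$ in hand, I would unpack condition (a) through the abbreviations introduced just before the axiom; it reads
$$[\min(Z)\neq 1 \vee \min(X)\neq 1^{\st}]\wedge[\min(Z)\neq 1^{\st}\vee\min(X)\neq 1].$$
Since $\min(Z)\neq 1$ discharges the left conjunct and $\min(Z)\neq 1^{\st}$ discharges the right conjunct, condition (a) holds for every $\sigma$-set $X$. Applying Axiom~\ref{axiom of completeness a} then yields $\{Z\}\cup\{X\}=\{Z,X\}$, and specialising $Z$ to $\emptyset$, $\alpha$, $\beta$ gives exactly parts (a), (b), (c) of the statement.

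The computation is short; the one place demanding care is the correct expansion of the $\min(X,Y)\neq|{\cdot}\vee{\cdot}|$ abbreviations, since misreading either of them as a conjunction would change which disjunct gets discharged. The only genuine set-theoretic input is the short list of inequalities $\alpha,\beta\neq 1,1^{\st}$, and each of these reduces either to Weak Regularity (Theorem~\ref{theo 5.1 w-regularity}) or to the impossibility of a nonempty $\sigma$-set being totally different from itself (Theorem~\ref{theo 4 x totdif y impl x int y empty}(a)).
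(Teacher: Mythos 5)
Your proposal is correct and follows exactly the same route as the paper: establish $\min(Z)\neq 1$ and $\min(Z)\neq 1^{\st}$ for $Z\in\{\emptyset,\alpha,\beta\}$ and invoke condition (a) of Axiom~\ref{axiom of completeness a}. The paper merely declares these inequalities ``clear'' and the conclusion ``obvious,'' whereas you supply the justifications (Weak Regularity and Theorem~\ref{theo 4 x totdif y impl x int y empty}) and the careful unpacking of the $\min(X,Y)\neq|{\cdot}\vee{\cdot}|$ abbreviation, which is a welcome but not substantively different elaboration.
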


\begin{proof}
It is clear that
$$\min(\alpha)\neq 1\wedge \min(\alpha)\neq 1^{\st},$$
$$\min(\beta)\neq 1\wedge \min(\beta)\neq 1^{\st},$$
$$\min(\emptyset)\neq 1\wedge \min(\emptyset)\neq 1^{\st}.$$
Therefore the proofs of $(a)$, $(b)$ and $(c)$ are obvious.
\end{proof}

Let us see some examples where Axiom \ref{axiom of completeness
a} (Completeness A) can be applied.

\begin{example}\label{example 7 fusion of pairs}
\
\begin{enumerate}
  \item $\{\emptyset\}\cup\{\emptyset\}=\{\emptyset\}=1_{\Theta},$
  \item $\{\emptyset\}\cup\{\alpha\}=\{\emptyset,\alpha\}=1_{\Lambda},$
  \item $\{\emptyset\}\cup\{\beta\}=\{\emptyset,\beta\}=1_{\Omega},$
  \item $\{\alpha\}\cup\{\alpha\}=\{\alpha\}=1,$
  \item $\{\alpha\}\cup\{\beta\}=\{\alpha,\beta\}=1_{\Gamma},$
  \item $\{\beta\}\cup\{\beta\}=\{\beta\}=1^{\st},$
  \item $\{\emptyset\}\cup\{1_{\Theta}\}=\{\emptyset,1_{\Theta}\}=2_{\Theta},$
  \item $\{\emptyset\}\cup\{1_{\Lambda}\}=\{\emptyset,1_{\Lambda}\}=2_{(\emptyset,\Lambda)},$
  \item $\{\emptyset\}\cup\{1_{\Omega}\}=\{\emptyset,1_{\Omega}\}=2_{(\emptyset,\Omega)},$
  \item $\{\emptyset\}\cup\{1_{\Gamma}\}=\{\emptyset,1_{\Gamma}\}=2_{(\emptyset,\Gamma)},$
  \item $\{\emptyset\}\cup\{1\}=\{\emptyset,1\}=2_{\Theta},$
  \item $\{\emptyset\}\cup\{1^{\st}\}=\{\emptyset,1^{\st}\}=2^{\st}_{\Theta},$
  \item $\{\alpha\}\cup\{1_{\Theta}\}=\{\alpha,1_{\Theta}\}=2_{(\alpha,\Theta)},$
  \item $\{\alpha\}\cup\{1_{\Lambda}\}=\{\alpha,1_{\Lambda}\}=2_{(\alpha,\Lambda)},$
  \item $\{\alpha\}\cup\{1_{\Omega}\}=\{\alpha,1_{\Omega}\}=2_{(\alpha,\Omega)},$
  \item $\{\alpha\}\cup\{1_{\Gamma}\}=\{\alpha,1_{\Gamma}\}=2_{(\alpha,\Gamma)},$
  \item $\{\alpha\}\cup\{1\}=\{\alpha,1\}=2,$
  \item $\{\alpha\}\cup\{1^{\st}\}=\{\alpha,1^{\st}\}=2^{\st}_{\alpha},$
  \item $\{\beta\}\cup\{1_{\Theta}\}=\{\beta,1_{\Theta}\}=2_{(\beta,\Theta)},$
  \item $\{\beta\}\cup\{1_{\Lambda}\}=\{\beta,1_{\Lambda}\}=2_{(\beta,\Lambda)},$
  \item $\{\beta\}\cup\{1_{\Omega}\}=\{\beta,1_{\Omega}\}=2_{(\beta,\Omega)},$
  \item $\{\beta\}\cup\{1_{\Gamma}\}=\{\beta,1_{\Gamma}\}=2_{(\beta,\Gamma)},$
  \item $\{\beta\}\cup\{1\}=\{\beta,1\}=2_{\beta},$
  \item $\{\beta\}\cup\{1^{\st}\}=\{\beta,1^{\st}\}=2^{\st},$
  \item $\{1\}\cup\{2\}=\{1,2\},$
  \item $\{1^{\st}\}\cup\{2^{\st}\}=\{1^{\st},2^{\st}\},$

\end{enumerate}
\end{example}

Results from (1) to (24) of Example \ref{example 7 fusion of pairs}
follow from Theorem \ref{theo 7 fusion of pairs} and therefore the
condition (a) of Axiom \ref{axiom of completeness a} (Completeness
(A)) holds. Results (25) and (26) follow from condition (b) of Axiom
\ref{axiom of completeness a}.

We present the following schemas. Let us introduce the following notation
\begin{itemize}
	\item $x\rightarrow y:= x\in y.$
\end{itemize}

\begin{center}
\begin{tabular}{ccccccc}
         &               &          &               & $\{2\}$    \\
         &               &          &               & $\uparrow$ \\
         &               &          &               & $2$          & $\rightarrow$ & $\{1,2\}$ \\
         &               &          & $\nearrow$    & $\uparrow$   & $\nearrow$ \\
$\cdots$ & $\rightarrow$ & $\alpha$ & $\rightarrow$ & $1$          & $\rightarrow$ & $\{1,1_{\Gamma}\}$ \\
         &               &          & $\searrow$    &              & $\nearrow$  \\
         &               &          &               & $1_{\Gamma}$ & $\rightarrow$ & $\{1_{\Gamma}\}$ \\
         &               &          & $\nearrow$    &              & $\searrow$   \\
$\cdots$ & $\rightarrow$ & $\beta$  & $\rightarrow$ & $1^{\st}$      & $\rightarrow$ & $\{1^{\st},1_{\Gamma}\}$ \\
         &               &          & $\searrow$    & $\downarrow$ & $\searrow$ \\
         &               &          &               & $2^{\st}$      & $\rightarrow$ & $\{1^{\st},2^{\st}\}$ \\
         &               &          &               & $\downarrow$ \\
         &               &          &               & $\{2^{\st}\}$  \\
\end{tabular}
\end{center}

It is clear that
\begin{itemize}
  \item $\min(1_{\Gamma})=1_{\Gamma},$
  \item $1,1^{\st}\in SG\cap LR$,
	\item $1_{\Gamma}\notin SG$ and $1_{\Gamma}\in LR$,
	\item $\{2\},\{2^{\st}\},\{1_{\Gamma}\}\in SG$ and $\{2\},\{2^{\st}\},\{1_{\Gamma}\}\notin LR$,
	\item $\{1,1_{\Gamma}\},\{1^{\st},1_{\Gamma}\},\{1,2\},\{1^{\st},2^{\st}\}\notin SG$
	\item $\{1,1_{\Gamma}\},\{1^{\st},1_{\Gamma}\},\{1,2\},\{1^{\st},2^{\st}\}\notin LR$.
\end{itemize}

\begin{center}
\begin{tabular}{ccccccccc}
\\
        &             &        &             &$1$           & $\rightarrow$ & $2$ & $\rightarrow$ &$\cdots$\\
        &             &        &$\nearrow$   \\
$\cdots$&$\rightarrow$&$\alpha$&             &              &               & $2_{\Lambda}$ & $\rightarrow$ &$\cdots$\\
        &             &        &$\searrow$   &              & $\nearrow$    \\
        &             &        &             &$1_{\Lambda}$ & $\rightarrow$ & $2_{(\emptyset,\Lambda)}$ & $\rightarrow$ &$\cdots$\\
        &             &        &             &$\uparrow$    & $\nearrow$    \\
        &             &        &             &$\emptyset$   & $\rightarrow$ & $1_{\Theta}$ &$\rightarrow$ & $2_{\Theta}$ \\
        &             &        &             &$\downarrow$  & $\searrow$    \\
        &             &        &             &$1_{\Omega}$  & $\rightarrow$ & $2_{(\emptyset,\Omega)}$ & $\rightarrow$ &$\cdots$\\
        &             &        &$\nearrow$   &              & $\searrow$    \\
$\cdots$&$\rightarrow$&$\beta$ &             &              &               & $2_{\Omega}$ & $\rightarrow$ &$\cdots$\\
        &             &        &$\searrow$   \\
        &             &        &             &$1^{\st}$     & $\rightarrow$ & $2^{\st}$ & $\rightarrow$ &$\cdots$\\

\\
\end{tabular}
\end{center}

\end{subsection}

\begin{subsection}{The Axiom of Completeness (B).}\label{axiom of completeness b}
If $X$ and $Y$ are $\sigma$-sets, then
$$\{X\}\cup\{Y\}=\emptyset,$$
if and only if $X$ and $Y$ satisfy the
following conditions:
\begin{description}
  \item[(a)] $\min(X,Y)=|1\wedge 1^{\st}|\vee \min(X,Y)=|1^{\st}\wedge 1|$;
  \item[(b)] $X\totdif Y;$
  \item[(c)] $(\forall z)(z\in X\wedge z\notin \min(X))\rightarrow \Psi(z,w,a,Y))$;
  \item[(d)] $(\forall z)(z\in Y\wedge z\notin \min(Y))\rightarrow \Psi(z,w,a,X))$.
\end{description}

\begin{definition}\label{def 8 antielement}
Let $X$ and $Y$ be $\sigma$-sets. If $\{X\}\cup\{Y\}=\emptyset$, then $Y$ \textbf{is called
the $\sigma$-antielement of }$X$, and will be denoted by $X^{\st}$.
\end{definition}

We notice that the basic idea for the inclusion of the Axioms \ref{axiom of completeness a},\ref{axiom of completeness b} (Completeness A and B) comes from the following schemas:

\begin{center}
\begin{tabular}{crccccccc}
\\
     &$3 \ $     & $=\{ \alpha, $ & $1,$           & $2 \ \}$\\
(a)  &       &                & $\updownarrow$ & $\updownarrow$ \ \ &             & $\rightarrow$ & $\{3\}\cup\{4^{\st}\}=\{3,4^{\st}\}$ & by AX. \ref{axiom of completeness a} \\
     &$4^{\st}$ & $=\{ \beta,  $ & $1^{\st},$       & $2^{\st},$       & $3^{\st} \}$\\
\\
\end{tabular}
\end{center}

\begin{center}
\begin{tabular}{ccccccccc}
\\
    & $4 \ $     & $=\{ \alpha, $ & $1,$           & $2 \ ,$           & $3 \ \}$\\
(b) &         &                & $\updownarrow$ & $\updownarrow \ \ $  & $\updownarrow \ \ $ & $\rightarrow$ & $\{4\}\cup\{4^{\st}\}=\emptyset$ & \ \ \ \ \ \ \ by AX. \ref{axiom of completeness b} \\
    &$4^{\st}$  & $=\{ \beta,  $ & $1^{\st},$       & $2^{\st},$       & $3^{\st} \}$\\
\\
\end{tabular}
\end{center}

Remember that the axioms 11 and 12 give the rules for the construction of fusion of pairs. However these axioms are constructed thinking in the natural $\sigma$-sets $\mathbb{N}$ and $\mathbb{N}^{\st}$ and are thus incomplete. 

\begin{theorem}\label{theo 8 oper pairs is conmutative}
Let $X$ and $Y$ be $\sigma$-sets. Then $\{X\}\cup\{Y\}=\emptyset$, if and only if $\{Y\}\cup\{X\}=\emptyset$.
\end{theorem}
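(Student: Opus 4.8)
The plan is to argue purely from the characterization supplied by Axiom \ref{axiom of completeness b} (Completeness B): the equation $\{X\}\cup\{Y\}=\emptyset$ holds \emph{if and only if} the ordered pair $(X,Y)$ satisfies conditions (a)--(d) of that axiom, and likewise $\{Y\}\cup\{X\}=\emptyset$ holds if and only if $(Y,X)$ satisfies the same four conditions. Hence it suffices to show that the conjunction of (a)--(d) is invariant under interchanging $X$ and $Y$. I would establish this condition by condition, and since the resulting equivalence is fully symmetric it is enough to treat one direction, the other following by relabeling.

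First I would unwind condition (a). By the abbreviation $\min(X,Y)=|A\wedge B|$, condition (a) for $(X,Y)$ reads $(\min(X)=1\wedge\min(Y)=1^{\st})\vee(\min(X)=1^{\st}\wedge\min(Y)=1)$, while for $(Y,X)$ it reads $(\min(Y)=1\wedge\min(X)=1^{\st})\vee(\min(Y)=1^{\st}\wedge\min(X)=1)$. These two formulas merely exchange the roles of the disjuncts, so by commutativity of $\vee$ and $\wedge$ they are logically equivalent.

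Next I would treat condition (b), which is the only step carrying genuine content. Here I must verify that totally-differentness is a symmetric relation, that is $X\totdif Y$ if and only if $Y\totdif X$. This reduces, via Definition \ref{def 4 totality different}, to the symmetry of the chain relation $\totdif$: its defining abbreviation $\lch x,\ldots,w\rch\totdif\lch a,\ldots,c\rch$ asserts that every link $u$ of the first chain and every link $v$ of the second satisfy $u\neq v$, and the matrix $u\neq v$ is symmetric in $u$ and $v$; quantifying symmetrically over all links then gives the symmetry of $\totdif$ at the chain level, and therefore at the $\sigma$-set level. Finally, conditions (c) and (d) simply trade places: condition (c) for $(X,Y)$, namely $(\forall z)(z\in X\wedge z\notin\min(X)\rightarrow\Psi(z,w,a,Y))$, is verbatim condition (d) for $(Y,X)$, and symmetrically condition (d) for $(X,Y)$ is condition (c) for $(Y,X)$, so their conjunction is preserved.

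Combining these three observations, the four conditions hold for $(X,Y)$ exactly when they hold for $(Y,X)$, and appealing once more to Axiom \ref{axiom of completeness b} yields $\{X\}\cup\{Y\}=\emptyset$ if and only if $\{Y\}\cup\{X\}=\emptyset$. The main (and essentially only) obstacle is the symmetry of $\totdif$ in condition (b); everything else is bookkeeping on the logical shape of the conditions, since (a) is manifestly symmetric and (c),(d) are a mirror pair. I would therefore allot most of the write-up to a clean statement of the chain-relation symmetry, perhaps isolating it as a short preliminary remark, and dispatch the rest by inspection.
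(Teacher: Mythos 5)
Your proposal is correct and takes essentially the same route as the paper: both arguments verify that condition (a) of Axiom \ref{axiom of completeness b} is symmetric under interchange of $X$ and $Y$, that condition (b) holds by the symmetry of $\totdif$ (which you justify in slightly more detail than the paper, tracing it back to the symmetry of $u\neq v$ at the chain level), and that conditions (c) and (d) simply exchange roles. The only cosmetic difference is that you argue the symmetry of the conjunction once and invoke relabeling, whereas the paper writes out the forward implication and declares the converse similar.
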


\begin{proof}
$(\rightarrow)$ Suppose that $\{X\}\cup\{Y\}=\emptyset$, then $X$ and $Y$ satisfies the conditions (a), (b), (c)
and (d) of Axiom \ref{axiom of completeness b} (Completeness B).
In consequence we obtain the following:

\begin{description}
  \item[(a)] Since $\min(X,Y)=|1\wedge 1^{\st}|$
or $\min(X,Y)=|1^{\st}\wedge 1|$, it is clear that $\min(Y,X)=|1^{\st}\wedge 1|$ or $\min(Y,X)=|1\wedge 1^{\st}|$.

  \item[(b)] Since $X\totdif Y$, then $Y\totdif X$ by Definition \ref{def 4 totality different}.

  \item[(c)] Let $z\in Y$ such that $z\notin \min(Y)$. Since $X$ and $Y$ satisfy the condition (d) of Axiom \ref{axiom of completeness b},
there exists a unique $w$ such that $\{z\}\cup\{w\}=\emptyset$ and given $a$ such that $\{z\}\cup\{a\}=\emptyset$ we have $a\in X$. In
consequence condition (c) applies for the fusion of pairs of $Y$ and $X$.

  \item[(d)] Following the previous proof, this condition is satisfied. Therefore $\{Y\}\cup\{X\}=\emptyset$.
\end{description}

$(\leftarrow)$ The converse follows in a similar way.
\end{proof}

\begin{corollary}\label{coro 8 fusion of pairs is commutative}
If $X$ and $Y$ are $\sigma$-sets, then the fusion of pairs of $X$ and $Y$ is commutative.
\end{corollary}

\begin{proof}
This fact is obvious by Theorems \ref{theo 4 oper pairs is conmutative} and \ref{theo 8 oper pairs is conmutative}.
\end{proof}

Regarding $\sigma$-antielements, the next results will be relevant.

By Theorem \ref{theo 7 fusion of pairs}, we obtain that there exist $\sigma$-sets without $\sigma$-antielement as the case of the empty $\sigma$-set. However, we will prove that, if a $\sigma$-set $X$ has a $\sigma$-antielement then it is unique.

\begin{theorem}\label{theo 8 antielement is unique}
Let $X$ be a $\sigma$-set. If there exists $X^{\st}$ the $\sigma$-antielement of $X$, then $X^{\st}$ is unique.
\end{theorem}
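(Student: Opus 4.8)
The plan is to prove uniqueness by showing that any two $\sigma$-antielements of $X$ share exactly the same $\sigma$-elements and then to invoke Axiom \ref{axiom of extensionality} (Extensionality). Concretely, I would suppose that $Y_1$ and $Y_2$ are both $\sigma$-antielements of $X$, so that $\{X\}\cup\{Y_1\}=\emptyset$ and $\{X\}\cup\{Y_2\}=\emptyset$. By Axiom \ref{axiom of completeness b} (Completeness B), each pair $(X,Y_1)$ and $(X,Y_2)$ satisfies the four conditions (a)--(d). The strategy is to establish $Y_1\subseteq Y_2$, after which the symmetric argument gives $Y_2\subseteq Y_1$, and Extensionality finishes the proof. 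To show $Y_1\subseteq Y_2$ I would take an arbitrary $z\in Y_1$ and split into the cases $z\in\min(Y_1)$ and $z\notin\min(Y_1)$.

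First I would dispose of the $\epsilon$-minimal elements. Condition (a) of Axiom \ref{axiom of completeness b}, applied to both $(X,Y_1)$ and $(X,Y_2)$ with the \emph{same} $\sigma$-set $X$, forces either $\min(X)=1$ together with $\min(Y_1)=\min(Y_2)=1^{\st}$, or $\min(X)=1^{\st}$ together with $\min(Y_1)=\min(Y_2)=1$ (here I use that $1$ and $1^{\st}$ are distinct, since $1\totdif 1^{\st}$, so at most one disjunct of (a) can hold for a fixed $\min(X)$). In either case $\min(Y_1)=\min(Y_2)$, and hence every $z\in\min(Y_1)$ already lies in $\min(Y_2)\subseteq Y_2$.

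The heart of the argument is the case $z\in Y_1$ with $z\notin\min(Y_1)$. Here condition (d) of Axiom \ref{axiom of completeness b} applied to $(X,Y_1)$ yields $\Psi(z,w,a,X)$: the $\sigma$-set $z$ possesses a unique $\sigma$-antielement $w$, and every $\sigma$-antielement of $z$ belongs to $X$, so in particular $w\in X$. The \textbf{key subtlety} I would then address is that $w\notin\min(X)$. Indeed, by condition (a) we have $\min(X)=1=\{\alpha\}$ or $\min(X)=1^{\st}=\{\beta\}$, so $w\in\min(X)$ would force $w=\alpha$ or $w=\beta$; but by Theorem \ref{theo 7 fusion of pairs} the fusions $\{\alpha\}\cup\{z\}$ and $\{\beta\}\cup\{z\}$ are never empty, contradicting $\{w\}\cup\{z\}=\emptyset$, which holds by Theorem \ref{theo 8 oper pairs is conmutative} because $\{z\}\cup\{w\}=\emptyset$. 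With $w\in X$ and $w\notin\min(X)$ established, condition (c) of Axiom \ref{axiom of completeness b} applied to $(X,Y_2)$ delivers $\Psi(w,\cdot,\cdot,Y_2)$, so every $\sigma$-antielement of $w$ lies in $Y_2$; since $z$ is itself a $\sigma$-antielement of $w$, we conclude $z\in Y_2$.

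Combining the two cases yields $Y_1\subseteq Y_2$; interchanging the roles of $Y_1$ and $Y_2$ gives $Y_2\subseteq Y_1$, and Axiom \ref{axiom of extensionality} (Extensionality) then gives $Y_1=Y_2$, which is the desired uniqueness. I expect the main obstacle to be the self-referential character of $\Psi$, which is defined in terms of the very relation $\{\cdot\}\cup\{\cdot\}=\emptyset$ whose solutions we are trying to count; this is precisely why the step showing $w\notin\min(X)$ is the crucial move, as it is what permits condition (c) to be triggered for $w$ and thereby converts the ``unique antielement lying in $X$'' information from condition (d) into the membership $z\in Y_2$.
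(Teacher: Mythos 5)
Your proposal is correct and follows essentially the same route as the paper's proof: both hinge on taking a non-minimal $z$ in one antielement, using condition (d) of Axiom \ref{axiom of completeness b} to produce its unique antielement $w\in X$, showing $w\notin\min(X)$ via Theorems \ref{theo 7 fusion of pairs} and \ref{theo 8 oper pairs is conmutative}, and then feeding $w$ into condition (c) for the other antielement to recover $z$. The only difference is presentational (you argue by double inclusion and Extensionality, the paper by contradiction on an element of the symmetric difference), and your explicit isolation of the step $w\notin\min(X)$ matches the paper's treatment exactly.
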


\begin{proof}
Consider two $\sigma$-sets $X$ and $X^{\st}$ such that $\{X\}\cup\{X^{\st}\}=\emptyset$. Suppose that there exists $\widehat{X}$, a $\sigma$-set such that $\{X\}\cup\{\widehat{X}\}=\emptyset$ and $X^{\st}\neq \widehat{X}$. Without loss of generality, we may assume  that $\min(X)=1$ and $\min(X^{\st})=1^{\st}$. Since $\min(X)=1$ and $\{X\}\cup\{\widehat{X}\}=\emptyset$ then $\min(\widehat{X})=1^{\st}$. Therefore, it is clear that $\beta\in X^{\st}\cap \widehat{X}$.

As $X^{\st}\neq \widehat{X}$ there exists $c\in \widehat{X}$ such that $c\notin X^{\st}$ or there exists $a\in X^{\st}$ such that $a\notin \widehat{X}$. If $c\in\widehat{X}$ and $c\notin X^{\st}$ then $c\notin \min(\widehat{X})$. Therefore there exists a unique $w$, such that $\{c\}\cup\{w\}=\emptyset$. Also, we obtain that $w\in X$. Since $\{c\}\cup\{w\}=\emptyset$ by Theorems \ref{theo 7 fusion of pairs} and \ref{theo 8 oper pairs is conmutative} we obtain that $w\neq\alpha$. Therefore $w\notin \min(X)$.

On the other hand, since $\{X\}\cup\{X^{\st}\}=\emptyset$, $w\in X$ and $w\notin\min(X)$, we obtain that there exists a unique $v$ such that $\{w\}\cup\{v\}=\emptyset$. So $v\in X^{\st}$. Finally, by Theorem \ref{theo 8 oper pairs is conmutative} we obtain that $\{w\}\cup\{v\}=\{v\}\cup\{w\}=\emptyset$. Thus $c=v\in X^{\st}$, which is a contradiction.

When there exists $a\in X^{\st}$ such that $a\notin \widehat{X}$ and we get the same contradiction. In consequence $X^{\st}$ is unique.
\end{proof}

\begin{corollary}\label{lemma 11.1}
Let $X$ be a $\sigma$-set. If there exists $X^{\st}$, the
$\sigma$-antielement of $X$, then $(X^{\st})^{\st}=X$.
\end{corollary}

\begin{proof}
By Theorem \ref{theo 8 antielement is unique}, it is enough to prove
that $\{X^{\st}\}\cup\{X\}=\emptyset$. But this fact is obvious by
Theorem \ref{theo 8 oper pairs is conmutative}.
\end{proof}

\begin{theorem}\label{theo 8 antielement of 1 is unique}
Consider the $\sigma$-sets $1$ and $1^{\st}$. Then $\{1\}\cup\{1^{\st}\}=\emptyset$.
\end{theorem}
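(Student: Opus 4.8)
The plan is to check directly that the pair $X=1$ and $Y=1^{\st}$ verifies each of the four conditions (a)--(d) in the statement of Axiom \ref{axiom of completeness b} (Completeness B); the ``if'' direction of that biconditional then delivers $\{1\}\cup\{1^{\st}\}=\emptyset$.

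First I would collect the facts already recorded for $1=\{\alpha\}$ and $1^{\st}=\{\beta\}$ in the list of basic properties following Axiom \ref{axiom one set} (Totally Different $\sigma$-Sets), namely $\min(1)=1$, $\min(1^{\st})=1^{\st}$ and $1\totdif 1^{\st}$. From $\min(1)=1$ together with $\min(1^{\st})=1^{\st}$ one reads off condition (a), that is $\min(1,1^{\st})=|1\wedge 1^{\st}|$, at once; and $1\totdif 1^{\st}$ is precisely condition (b).

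The substance of the argument is the verification of conditions (c) and (d), which I expect to hold \emph{vacuously}. Since $1=\{\alpha\}$ is a singleton and $\min(1)=1$, the only $\sigma$-element of $1$ is $\alpha$, and $\alpha\in\min(1)$; hence there is no $\sigma$-set $z$ with $z\in 1$ and $z\notin\min(1)$, so the premise of the implication in (c) is never satisfied and (c) holds. By the symmetric reasoning applied to $1^{\st}=\{\beta\}$ with $\min(1^{\st})=1^{\st}$, condition (d) holds as well.

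With (a)--(d) in hand, Axiom \ref{axiom of completeness b} (Completeness B) yields $\{1\}\cup\{1^{\st}\}=\emptyset$, as claimed. I do not anticipate a genuine obstacle here: the one point that deserves care is recognizing that a singleton coincides with its $\epsilon$-minimal $\sigma$-subclass (as in Lemma \ref{lemma 6 singleton min}), so that the universally quantified conditions (c) and (d) are discharged vacuously and one never has to produce the $\sigma$-antielement witnesses demanded by the formula $\Psi$.
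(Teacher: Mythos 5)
Your proposal is correct and follows essentially the same route as the paper's own proof: both verify conditions (a) and (b) of Axiom \ref{axiom of completeness b} from the recorded facts $\min(1)=1$, $\min(1^{\st})=1^{\st}$, $1\totdif 1^{\st}$, and both dispose of (c) and (d) by observing that $\min(1)=1$ and $\min(1^{\st})=1^{\st}$ leave no $\sigma$-element $z$ outside the $\epsilon$-minimal $\sigma$-subset, so the implications hold vacuously. Your write-up merely makes the vacuity argument explicit where the paper leaves it implicit.
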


\begin{proof}
It is clear that $\min(1)=1$, $\min(1^{\st})=1^{\st}$ and $1\totdif 1^{\st}$, then the conditions (a) and (b) of Axiom \ref{axiom of completeness b} (Completeness B) holds. The conditions (c) and (d) of Axiom \ref{axiom of completeness b} follow from the fact that $\min(1)=1$ and $\min(1^{\st})=1^{\st}.$
\end{proof}

\begin{theorem}\label{theo 8 2 antielemt of 2 star}
Consider the $\sigma$-sets $2$ and $2^{\st}$. Then $\{2\}\cup \{2^{\st}\}=\emptyset$.
\end{theorem}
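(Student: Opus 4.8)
The plan is to check, one by one, the four conditions (a)--(d) of Axiom~\ref{axiom of completeness b} (Completeness B) for the pair $X=2=\{\alpha,1\}$ and $Y=2^{\st}=\{\beta,1^{\st}\}$; once all four hold, the conclusion $\{2\}\cup\{2^{\st}\}=\emptyset$ is immediate. The whole argument rests on first pinning down $\min(2)$ and $\min(2^{\st})$, since these determine which elements must be tested in conditions (c) and (d).

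First I would show that $\min(2)=1$ and $\min(2^{\st})=1^{\st}$, which also settles condition (a). For $\min(2)$ I use Definition~\ref{def 6 min max}: the element $\alpha$ is $\epsilon$-minimal in $2$ because no link of any $\epsilon$-chain of $\alpha$ can equal $\alpha$ or $1$ --- the first by Axiom~\ref{axiom of w-regularity} (Weak Regularity), and the second because $\alpha\in 1$, so $1$ appearing below $\alpha$ would close a cycle $\alpha\in 1\in\cdots\in\alpha$, again contradicting Weak Regularity. By contrast $1$ is \emph{not} $\epsilon$-minimal in $2$, since $\lch\alpha\rch\in CH(1)$ and $\alpha\in 2$. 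Hence $\min(2)=\{\alpha\}=1$, and symmetrically $\min(2^{\st})=\{\beta\}=1^{\st}$, giving $\min(2,2^{\st})=|1\wedge 1^{\st}|$.

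Condition (b) asks for $2\totdif 2^{\st}$, and this is where I expect the real work to be. By Definition~\ref{def 4 totality different} I must show that every $\epsilon$-chain of $2$ is disjoint from every $\epsilon$-chain of $2^{\st}$; equivalently, the links reachable from $2$, namely $\{\alpha,1\}$ together with all links below $\alpha$, are pairwise distinct from the links reachable from $2^{\st}$, namely $\{\beta,1^{\st}\}$ together with all links below $\beta$. I would assemble this from the already-recorded total differences $\alpha\totdif\beta$, $1\totdif 1^{\st}$, $1\totdif\beta$ and $\alpha\totdif 1^{\st}$, upgrading each to a genuine inequality via Theorem~\ref{theo 4 x totdif y impl x int y empty } (so that e.g. $\alpha\neq\beta$), and ruling out cross-identifications such as ``$\alpha$ is a link below $\beta$'' by the same chain-cycle argument as above: such an identification would force a common link shared by a chain of $\alpha$ and a chain of $\beta$, contradicting $\alpha\totdif\beta$.

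Finally, conditions (c) and (d) become short. Since $\min(2)=1$, the only $z\in 2$ with $z\notin\min(2)$ is $z=1$; by Theorem~\ref{theo 8 antielement of 1 is unique} we have $\{1\}\cup\{1^{\st}\}=\emptyset$, and Theorem~\ref{theo 8 antielement is unique} makes $1^{\st}$ the unique such partner, so since $1^{\st}\in 2^{\st}$ the formula $\Psi(z,w,a,2^{\st})$ holds and (c) is verified. Symmetrically, the only non-minimal $z\in 2^{\st}$ is $1^{\st}$, whose unique partner $1$ (using also Theorem~\ref{theo 8 oper pairs is conmutative}) lies in $2$, so (d) holds. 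With (a)--(d) established, Axiom~\ref{axiom of completeness b} yields $\{2\}\cup\{2^{\st}\}=\emptyset$.
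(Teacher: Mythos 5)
Your proposal is correct and follows essentially the same route as the paper: both verify conditions (a)--(d) of Axiom~\ref{axiom of completeness b} directly, with (a) settled by $\min(2)=1$ and $\min(2^{\st})=1^{\st}$, (b) reduced to the disjointness of $\epsilon$-chains via $1\totdif 1^{\st}$ and $\alpha\totdif\beta$, and (c)--(d) obtained from Theorems~\ref{theo 8 antielement of 1 is unique} and \ref{theo 8 antielement is unique}. The only difference is cosmetic: you enumerate the links reachable from $2$ and $2^{\st}$ and compare them pairwise, whereas the paper argues by contradiction through a case analysis on the greatest links of the two chains.
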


\begin{proof}
\begin{description}
  \item[(a)] It is clear that $2=\{\alpha,1\}$ and $2^{\st}=\{\beta,1^{\st}\}$
. Then $\min(2)=1$ and $\min(2^{\st})=1^{\st}$. In consequence the
condition (a) of Axiom \ref{axiom of completeness b} (Completeness
(B)) holds.

  \item[(b)] Suppose that $2$ is not totally different from $2^{\st}$, then there exist $\lch x,\ldots,w \rch\in CH(2)$ and
$\lch a,\ldots,c \rch\in CH(2^{\st})$ that are not disjoint. Therefore there exists $z$ such that $z\inch \lch x,\ldots,w \rch$
and $z\inch \lch a,\ldots,c \rch$. We notice that the greater link of $\epsilon$-chains $\lch x,\ldots,w \rch$ and $\lch a,\ldots,c\rch$ satisfies the following conditions:

  \begin{enumerate}
    \item $w=\alpha\vee w=1$.
    \item $c=\beta\vee c=1^{\st}.$
  \end{enumerate}

  If $z=w$ then $z\neq c$ because $1\totdif 1^{\st}$. Therefore we can construct $\lch z,\ldots,\beta \rch\in CH(1^{\st})$, which is a contradiction
because $1\totdif 1^{\st}$. In the case that $z=c$ we obtain the same contradiction. Finally, if $z\neq w$ and $z\neq c$, then we can
construct $\lch z,\ldots,\alpha \rch\in CH(1)$ and $\lch z,\ldots,\beta\rch\in CH(1^{\st})$, which is a contradiction
because $1\totdif 1^{\st}$. Therefore we obtain that $2\totdif 2^{\st}$.

  \item[(c)] Let $z\in 2$ and $z\notin \min(2)$. Since $z\notin \min(2)$ then $z=1$. By Theorem \ref{theo 8 antielement of 1 is unique} we
have that there exists a unique $1^{\st}$ such that $\{1\}\cup\{1^{\st}\}=\emptyset$. Finally, we consider $a$ such that
$\{1\}\cup\{a\}=\emptyset$, then $a=1^{\st}$, therefore we obtain that $a\in 2^{\st}$.

  \item[(d)] This proof is analogous to the previous one.

\end{description}

\end{proof}

It is important to notice that by Theorem \ref{theo 7 fusion of pairs} there exist $\sigma$-sets which do not have $\sigma$-antielement.

\begin{example}\label{example 8 fusion of pairs}
As a consequence from Theorem \ref{theo 8 antielement of 1 is unique} and the conditions (c) and (d) of Axiom \ref{axiom of completeness a} (Completeness A) we have the following results:

\begin{tabular}{ccccccccc}
\\
    & $1 \ $     & $=\{ \alpha \}$\\
(a) &            &              &             & $\rightarrow$ & $\{1\}\cup\{2^{\st}\}=\{1,2^{\st}\}$ \\
    & $2^{\st}$  & $=\{ \beta,$ & $1^{\st} \}$\\
\\
\end{tabular}

\begin{tabular}{ccccccccc}
\\
    & $2 \ $     & $=\{ \alpha,$ & $1 \}$\\
(b) &            &               &          & $\rightarrow$ & $\{1^{\st}\}\cup\{2\}=\{1^{\st},2\}$ \\
    & $1^{\st}$  & $=\{ \beta \}$\\
\\
\end{tabular}

\begin{tabular}{ccccccccc}
\\
    & $2 \ $     & $=\{ \alpha,$ & $1 \ \}$         \\
(c) &            &               & $\updownarrow \ \ $ &             & $\rightarrow$ & $\{2\}\cup\{3^{\st}\}=\{2,3^{\st}\}$ \\
    & $3^{\st}$  & $=\{ \beta, $ & $1^{\st}, $    & $ 2^{\st} \}.$\\
\\
\end{tabular}
\end{example}

We note that $1$ and $1^{\st}$ are related for the following equation:
$$\{1\}\cup\{1^{\st}\}=\emptyset.$$
Thus $1$ and $1^{\st}$ cannot belong to the same $\sigma$-set. To formalize this fact we have added the following axiom.

\end{subsection}

\begin{subsection}{The Axiom of Exclusion.}\label{axiom of exclusion}
For all $\sigma$-sets $X,Y,Z$, if $Y$ and $Z$ are $\sigma$-elements of $X$ then the fusion of pairs of $Y$ and $Z$ contains exactly $Y$ and $Z$, that is
$$(\forall X,Y,Z)(Y,Z\in X\rightarrow \{Y\}\cup\{Z\}=\{Y,Z\}).$$

\begin{theorem}(\textbf{Exclusion of Inverses})\label{theo 9 exclusion inverses}
Let $X$ be a $\sigma$-set. If $x\in X$, then $x^{\st}\notin X$.
\end{theorem}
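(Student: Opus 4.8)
The plan is to argue by contradiction, exploiting the mutual exclusivity built into the Axiom of Pairs between the outcome ``contains exactly two $\sigma$-elements'' and the outcome ``equals the empty $\sigma$-set''. Throughout I assume that $x$ actually possesses a $\sigma$-antielement $x^{\st}$, since otherwise the symbol $x^{\st}$ is undefined and there is nothing to prove.

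First I would suppose, toward a contradiction, that $x\in X$ and also $x^{\st}\in X$. Recalling Definition \ref{def 8 antielement}, the very fact that $x^{\st}$ is the $\sigma$-antielement of $x$ means that the fusion of pairs of $x$ and $x^{\st}$ collapses to the empty $\sigma$-set, i.e. $\{x\}\cup\{x^{\st}\}=\emptyset$. This is precisely the outcome (b) of the Axiom of Pairs \ref{axiom of pairs}.

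Next I would apply the Axiom of Exclusion \ref{axiom of exclusion} with $Y:=x$ and $Z:=x^{\st}$. Since both $x$ and $x^{\st}$ are $\sigma$-elements of $X$, the axiom forces $\{x\}\cup\{x^{\st}\}=\{x,x^{\st}\}$, that is, the outcome (a) of the Axiom of Pairs. Combining the two displays yields $\{x,x^{\st}\}=\emptyset$, whence $x\in\emptyset$, contradicting the Axiom of Empty $\sigma$-Set \ref{axiom empty set}, which asserts that the empty $\sigma$-set has no $\sigma$-elements. Hence the assumption $x^{\st}\in X$ is untenable, and $x^{\st}\notin X$.

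The argument is short, and the only genuine subtlety — where I would take care — is the logical status of the two conclusions in the Axiom of Pairs: they are joined by the exclusive disjunction $\underline{\vee}$, so conditions (a) and (b) cannot hold simultaneously for one and the same fusion of pairs. The contradiction is exactly the clash between the antielement hypothesis (which forces (b)) and the Axiom of Exclusion (which forces (a)); making this incompatibility explicit, rather than performing any routine manipulation, is the crux of the proof.
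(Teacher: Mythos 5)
Your proof is correct and follows essentially the same route as the paper: assume $x,x^{\st}\in X$, invoke the Axiom of Exclusion to force $\{x\}\cup\{x^{\st}\}=\{x,x^{\st}\}$, and derive a contradiction with the defining equation $\{x\}\cup\{x^{\st}\}=\emptyset$ of the $\sigma$-antielement. You merely spell out the final clash more explicitly (via $x\in\emptyset$ and the exclusive disjunction in the Axiom of Pairs) than the paper's one-line version does.
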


\begin{proof}
Assume that $X\neq\emptyset$ and suppose that $x,x^{\st}\in X$, then by Axiom \ref{axiom of exclusion} (Exclusion) we obtain that $\{x\}\cup\{x^{\st}\}\neq\emptyset$, which is a contradiction.
\end{proof}

We notice that, one of the most important characteristics of the $\sigma$-ST is reflected in Theorem \ref{theo 9 exclusion inverses}. On the other hand, it is clear that $1,1^{\st}\in SG\cap LR$, hence by Theorem \ref{theo 9 exclusion inverses} we see that $SG\cap LR$ is a proper $\sigma$-class. Also, if we define the $\sigma$-class
$$AT=\{X: \min(X)=1\vee\min(X)=1^{\st}\},$$
we have that for all $\sigma$-set $A$ such that there exists $A^{\st}$ the $\sigma$-antielement of $A$, then $A,A^{\st}\in AT$, by condition (a) of Axiom \ref{axiom of completeness b} (Completeness B). Therefore, by Theorem \ref{theo 9 exclusion inverses} we see that $AT$ is a proper $\sigma$-class.

\begin{definition}\label{def 9 int and diff antiset}
Let $X$ and $Y$ be $\sigma$-sets. We define two new operations on
$\sigma$-sets:
\begin{enumerate}
  \item $X\capst Y :=\{x\in X: x^{\st}\in Y\};$
  \
  \item $X\difst  Y:=X-(X\capst Y).$
\end{enumerate}
\end{definition}

By Theorem \ref{theo 3 schema of separation}  (Schema of Separation)
it is clear that $X\capst Y$ and $X\difst Y$ are $\sigma$-sets. Also
we observe that
$$X\capst Y\subseteq X \wedge Y\capst X\subseteq Y$$
and
$$X\difst  Y\subseteq X \wedge Y\difst  X\subseteq Y.$$

\begin{example}\label{example 9 star intersection}

By Example \ref{example 7 fusion of pairs}, we have the following:

\begin{tabular}{ccccccccc}
\\
(a) & $2_{\Theta}$ & $=\{ \emptyset,$ & $1_{\Theta} \}$ \\
    &              &                  &               & $\rightarrow$ & $2_{\Theta}\capst 2=\{x\in 2_{\Theta}: x^{\st}\in 2\}=\emptyset.$ \\
    & $2 \ $       & $=\{ \alpha, $   & $1 \}$\\
\\
\end{tabular}

Thus $2_{\Theta}\difst  2= 2_{\Theta}$.

\begin{tabular}{ccccccccc}
\\
 (b)& $2_{\beta}$ & $=\{ \beta,$ & $1 \ \}$        \\
    &             &              & $\updownarrow \ \ $ & $\rightarrow$ & $2_{\beta}\capst 2^{\st}=\{x\in 2_{\beta}: x^{\st}\in 2^{\st}\}=\{1\}.$ \\
    & $2^{\st} $  & $=\{ \beta,$ & $1^{\st} \}$  \\
\\
\end{tabular}

Thus $2_{\beta}\difst  2^{\st}=\{\beta\}$.

\begin{tabular}{ccccccccc}
\\
 (c)& $A \ $      & $=\{ 1,$            & $2 \ \}$          &               & $A\capst A^{\star}=\{x\in A: x^{\st}\in A^{\star}\}=A$ \\
    &             & $ \ \ \ \updownarrow$ & $\updownarrow \ \ $ & $\rightarrow$ & and \\
    & $A^{\star}$ & $=\{ 1^{\st},$      & $2^{\st} \}$      &               & $A^{\star}\capst A=\{x\in A^{\star}: x^{\st}\in A\}=A^{\star}.$\\
\\
\end{tabular}

Thus $A\difst  A^{\star}=\emptyset$ and $A^{\star}\difst A=\emptyset$.
\end{example}

\begin{theorem}\label{theo 9 star intersection}
Let $X$ be a $\sigma$-set $X$. Then
\begin{description}
  \item[(a)] $X\capst \emptyset= \emptyset\capst X=X\capst \alpha= \alpha\capst X=X\capst \beta= \beta\capst X=\emptyset.$
  \item[(b)] $X\capst1_{\Theta}= 1_{\Theta}\capst X=X\capst1= 1\capst X=X\capst1^{\st}= 1^{\st}\capst X=\emptyset.$
  \item[(c)] $X\capst X=\emptyset.$
\end{description}
\end{theorem}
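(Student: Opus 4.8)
The statement bundles many instances of one underlying fact, so the plan is to isolate that fact and then read off every case. The observation I would record first is that, by condition (a) of Axiom \ref{axiom of completeness b} (Completeness B) (as already noted in the discussion following Theorem \ref{theo 9 exclusion inverses}), whenever $A^{\st}$ exists both $A$ and $A^{\st}$ lie in $AT=\{X:\min(X)=1\vee\min(X)=1^{\st}\}$. Consequently a $\sigma$-set outside $AT$ neither possesses an antielement nor is the antielement of anything. Now every expression in the theorem has the shape $Z\capst X=\{x\in Z:x^{\st}\in X\}$ or $X\capst Z=\{x\in X:x^{\st}\in Z\}$ with $Z\in\{\emptyset,\alpha,\beta,1,1^{\st},1_{\Theta}\}$: the former is empty unless some element of $Z$ has an antielement, the latter unless some element of $Z$ is an antielement. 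In both cases nonemptiness would require an element of $Z$ to belong to $AT$, so it suffices to prove that no element of any such $Z$ lies in $AT$ (for $Z=\emptyset$ this is vacuous, killing $X\capst\emptyset$ and $\emptyset\capst X$). Part (c), $X\capst X=\emptyset$, is separate and immediate: by Theorem \ref{theo 9 exclusion inverses} (Exclusion of Inverses) $x\in X$ forces $x^{\st}\notin X$, so $\{x\in X:x^{\st}\in X\}=\emptyset$.

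Next I would identify the elements of each $Z$. The only element of $1=\{\alpha\}$ is $\alpha$, of $1^{\st}=\{\beta\}$ is $\beta$, of $1_{\Theta}=\{\emptyset\}$ is $\emptyset$; and since $\alpha,\beta\in SG\cap LR$ they are singletons, say $\alpha=\{\alpha_{1}\}$ and $\beta=\{\beta_{1}\}$, whose unique elements satisfy $\alpha_{1},\beta_{1}\in SG\cap LR$ by Lemma \ref{lemma if X LR then link SG-LR} and hence $\min(\alpha_{1})=\alpha_{1}$, $\min(\beta_{1})=\beta_{1}$ by Lemma \ref{lemma 6 singleton min}. Together with $\min(\emptyset)=\emptyset$, $\min(\alpha)=\alpha$ and $\min(\beta)=\beta$, the problem reduces to checking that each of $\emptyset,\alpha,\beta,\alpha_{1},\beta_{1}$ is distinct from both $1$ and $1^{\st}$ (so that its $\min$, being itself, is neither $1$ nor $1^{\st}$); the case of $\emptyset$ is trivial.

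The inequalities are the heart of the argument and the step I expect to demand the most care, since they are where the $\epsilon$-chain machinery enters. I would argue $\alpha\neq 1$ because $1=\{\alpha\}$ would give $\alpha\in\alpha$, contradicting Theorem \ref{theo 5.1 w-regularity}, and $\alpha\neq 1^{\st}$ because the listed property $\alpha\totdif 1^{\st}$ would become $\alpha\totdif\alpha$, whence $\alpha\cap\alpha=\emptyset$ by Theorem \ref{theo 4 x totdif y impl x int y empty }(a) and so $\alpha=\emptyset$, absurd. Symmetrically $\beta\neq 1^{\st}$ via self-membership and $\beta\neq 1$ via $1\totdif\beta$. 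For the descended links, $\alpha_{1}\neq 1$ because $\alpha_{1}=\{\alpha\}$ together with $\alpha_{1}\in\alpha$ produces the two-cycle $\alpha\in\alpha_{1}\in\alpha$, contradicting Theorem \ref{theo 5.3 w-regularity}; and $\alpha_{1}\neq 1^{\st}$ because $\alpha_{1}=\{\beta\}$ would put $\beta$ as a link of the chain $\lch\beta,\alpha_{1}\rch\in CH(\alpha)$ while $\lch\beta\rch\in CH(1^{\st})$, a link shared by a chain of $\alpha$ and a chain of $1^{\st}$, contradicting $\alpha\totdif 1^{\st}$ (Definition \ref{def 4 totality different}). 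The argument for $\beta_{1}$ is entirely parallel, using a two-cycle against $1^{\st}$ and a shared link against $1\totdif\beta$. Assembling these, no element of any $Z$ lies in $AT$, so every $\capst$ listed in (a) and (b) is empty, which with part (c) completes the proof.
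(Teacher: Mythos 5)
Your proof is correct and takes essentially the same route as the paper's: condition (a) of the Axiom of Completeness (B) forces anything that has, or serves as, a $\sigma$-antielement to have $\min$ equal to $1$ or $1^{\st}$, and Weak Regularity together with the listed total-difference properties ($1\totdif 1^{\st}$, $\alpha\totdif 1^{\st}$, $1\totdif\beta$) rules this out for $\emptyset$, $\alpha$, $\beta$ and their elements, while part (c) is the Exclusion of Inverses in both versions. The only organizational difference is that you fold parts (a) and (b) into one argument through the class $AT$ and spell out the inequalities $\alpha\neq 1$, $\alpha\neq 1^{\st}$, $\alpha_{1}\neq 1$, etc., which the paper treats as clear (its part (b) is dispatched by citing Theorem \ref{theo 7 fusion of pairs}).
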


\begin{proof}
\begin{description}
  \item[(a)] It is clear that $X\capst \emptyset= \emptyset\capst X=\emptyset$ by Definition \ref{def 9 int and diff antiset}. Suppose that $X\capst \alpha\neq\emptyset$ or $\alpha\capst X\neq\emptyset$. If $X\capst \alpha\neq\emptyset$, we have that there exist $x\in X$ and $x^{\st}\in \alpha$. By condition (a) of Axiom \ref{axiom of completeness b} (Completeness B), we obtain that $\min(x^{\st})=1$ or $\min(x^{\st})=1^{\st}$. If $\min(x^{\st})=1$ then $\alpha\in x^{\st}$. Therefore $\alpha\in x^{\st}\in \alpha$ which is a contradiction by Axiom \ref{axiom of w-regularity} (Weak Regularity). If $\min(x^{\st})=1^{\st}$ we have $\beta\in x^{\st}$, and therefore $\beta \in x^{\st}\in \alpha$, which is a contradiction because $1\totdif 1^{\st}$. Following a similar reasoning, in the case that $\alpha\capst X\neq\emptyset$ we obtain the same contradiction.

The proof that $X\capst \beta= \beta\capst X=\emptyset$ is analogous to the previous one.

  \item[(b)] This fact is obvious by Theorem \ref{theo 7 fusion of pairs}.

  \item[(c)] This fact is obvious by Theorem \ref{theo 9 exclusion
inverses}.
\end{description}
\end{proof}

\begin{corollary}\label{coro 9 star intersection}
Let $X$ be a $\sigma$-set $X$. Then
\begin{description}
  \item [(a)] $X\difst \emptyset=X$ and $\emptyset\difst X=\emptyset ;$
  \item [(b)] $X\difst \alpha=X$ and $\alpha\difst X=\alpha;$
  \item [(c)] $X\difst \beta=X$ and $\beta\difst X=\beta;$
  \item [(d)] $X\difst 1_{\Theta}=X$ and $1_{\Theta}\difst X=1_{\Theta};$
  \item [(e)] $X\difst 1=X$ and $1\difst X=1;$
  \item [(f)] $X\difst 1^{\st}=X$ and $1^{\st}\difst X=1^{\st};$
  \item [(g)] $X\difst X=X.$
\end{description}
\end{corollary}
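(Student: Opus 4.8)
The plan is to reduce every identity to a single computation of the relevant $\capst$-operation, all of which have already been evaluated in Theorem \ref{theo 9 star intersection}. Recall from Definition \ref{def 9 int and diff antiset} that $X\difst Y=X-(X\capst Y)$, so each of the seven claims splits into two applications of this definition, each supported by the auxiliary fact that $Z-\emptyset=Z$ for every $\sigma$-set $Z$.

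First I would record that auxiliary identity. By Definition \ref{def 3 inter and diff of sets} we have $Z-\emptyset=\{x\in Z: x\notin\emptyset\}$, and since no $\sigma$-set is a $\sigma$-element of $\emptyset$ (Axiom \ref{axiom empty set}), the condition $x\notin\emptyset$ holds for every $x\in Z$; hence $Z-\emptyset=Z$. This is the only sub-step that requires an argument rather than a citation.

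Next I would dispatch the left-hand identities. For (a) I compute $X\difst\emptyset=X-(X\capst\emptyset)$ and invoke Theorem \ref{theo 9 star intersection}(a) to replace $X\capst\emptyset$ by $\emptyset$, giving $X-\emptyset=X$ by the auxiliary identity. The cases $X\difst\alpha$ and $X\difst\beta$ in (b) and (c) are identical, using the remaining equalities in Theorem \ref{theo 9 star intersection}(a); the cases $X\difst 1_{\Theta}$, $X\difst 1$, $X\difst 1^{\st}$ in (d)--(f) use Theorem \ref{theo 9 star intersection}(b); and $X\difst X$ in (g) uses Theorem \ref{theo 9 star intersection}(c).

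Finally the right-hand identities in (a)--(f) follow the same pattern with the roles reversed: for instance $\alpha\difst X=\alpha-(\alpha\capst X)$, and since $\alpha\capst X=\emptyset$ by Theorem \ref{theo 9 star intersection}(a), the auxiliary identity yields $\alpha-\emptyset=\alpha$. There is no genuine obstacle here; the whole statement is a corollary precisely because, once the trivial set-difference identity $Z-\emptyset=Z$ is in hand, every line is a direct substitution of a value already computed in Theorem \ref{theo 9 star intersection}.
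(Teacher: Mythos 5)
Your proposal is correct and takes essentially the same route as the paper, whose proof is the one-line remark that the result is obvious from Definition \ref{def 9 int and diff antiset} and Theorem \ref{theo 9 star intersection}; you have merely written out the substitutions, together with the elementary identity $Z-\emptyset=Z$, explicitly. Nothing is missing and nothing differs in substance.
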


\begin{proof}
This proof is obvious by Definition \ref{def 9 int and diff antiset}
and Theorem \ref{theo 9 star intersection}.
\end{proof}

\begin{definition}\label{def 9 set AF}
Let $X$ be a $\sigma$-set. We say that $X$ is \textbf{$\sigma$-antielement free} if for all $A,B\in X$ then $A\capst B=\emptyset.$
\end{definition}

In order to denote a $\sigma$-antielement free $\sigma$-set we will use the following $\sigma$-class:
$$AF=\{X: X \textrm{ is $\sigma$-antielement free }\}.$$

It is clear, by Theorem \ref{theo 9 star intersection}, that the $\sigma$-set $1_{\Gamma}=\{\alpha,\beta\}\in AF$. Nevertheless the $\sigma$-set $X=\{2_{\beta},2^{\st}\}\notin AF$.

\begin{lemma}
Let $X$ be a $\sigma$-set. Then $\{X\}\in AF$.
\end{lemma}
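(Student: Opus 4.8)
The plan is to unfold the definition of $\sigma$-antielement freeness and observe that a singleton forces both quantified variables to coincide with $X$, reducing everything to a single already-proved identity. By Definition \ref{def 9 set AF}, to show $\{X\}\in AF$ I must verify that $A\capst B=\emptyset$ for every pair of $\sigma$-elements $A,B\in\{X\}$.

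First I would recall that $\{X\}$ is a singleton in the sense of Definition \ref{def singleton}: its unique $\sigma$-element is $X$. Hence whenever $A\in\{X\}$ we must have $A=X$, and likewise $B\in\{X\}$ forces $B=X$. Consequently the only instance of the universally quantified condition that can occur is the case $A=B=X$, so the entire verification collapses to establishing the single identity $X\capst X=\emptyset$.

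This last identity is precisely part (c) of Theorem \ref{theo 9 star intersection}, which is itself an immediate consequence of the Exclusion of Inverses (Theorem \ref{theo 9 exclusion inverses}). Applying it yields $A\capst B=X\capst X=\emptyset$ for the only admissible pair, and therefore $\{X\}\in AF$.

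There is essentially no obstacle here: the mathematical content of the lemma is carried entirely by Theorem \ref{theo 9 star intersection}(c), and the only point requiring care is the trivial but necessary observation that a singleton admits no pair of distinct $\sigma$-elements, so that the case $A=B=X$ exhausts all instances of the defining condition for membership in $AF$.
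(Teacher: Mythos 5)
Your proposal is correct and follows exactly the route the paper takes: the paper's proof simply cites Theorem \ref{theo 9 star intersection}, whose part (c) gives $X\capst X=\emptyset$, and your argument just spells out the (correct) reduction that every pair $A,B\in\{X\}$ must satisfy $A=B=X$. Nothing is missing and nothing differs in substance.
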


\begin{proof}
This fact is obvious by Theorem \ref{theo 9 star intersection}.
\end{proof}

\end{subsection}

\begin{subsection}{The Axiom of Power $\sigma$-set.}\label{axiom of power set}
For all $\sigma$-set $X$  there exists a $\sigma$-set $Y$, called the power of $X$, whose $\sigma$-elements are exactly the $\sigma$-subsets of $X$, that is
$$(\forall X)(\exists Y)(\forall z)(z\in Y\leftrightarrow z\subseteq X).$$

\begin{definition}\label{def 10 proper subset}
Let $X$ be a $\sigma$-set,
\begin{enumerate}
  \item If $Z\subset X$, then $Z$ is a proper $\sigma$-subset of $X$.
  \item The $\sigma$-set of all $\sigma$-subsets of $X$,
  $$2^{X}=\{z:z\subseteq X\},$$
  is called the power $\sigma$-set of $X$.
\end{enumerate}
\end{definition}

\begin{theorem}\label{theo 10 set AF subset AF}
Let $Z$ be a $\sigma$-set. If $Z\in AF$ and $X\subseteq Z$, then $X\in AF$.
\end{theorem}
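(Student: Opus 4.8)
The plan is to derive this immediately from Definition \ref{def 9 set AF} together with the definition of the inclusion $\subseteq$. Recall that $Z\in AF$ means precisely that every pair of $\sigma$-elements $A,B\in Z$ satisfies $A\capst B=\emptyset$, while $X\subseteq Z$ means that every $\sigma$-element of $X$ is also a $\sigma$-element of $Z$. So the whole content of the statement is a transitivity of membership under the inclusion, fed into the defining property of $AF$.

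First I would fix arbitrary $\sigma$-sets $A$ and $B$ with $A\in X$ and $B\in X$, since establishing $X\in AF$ amounts to showing $A\capst B=\emptyset$ for all such $A,B$. Then, using the hypothesis $X\subseteq Z$, I would conclude that $A\in Z$ and $B\in Z$. Applying the hypothesis $Z\in AF$ to this pair $A,B\in Z$ yields $A\capst B=\emptyset$. Since $A$ and $B$ were arbitrary $\sigma$-elements of $X$, this proves that $X$ is $\sigma$-antielement free, that is, $X\in AF$.

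The hard part is essentially nonexistent: the argument is one line of membership chasing, and there is no delicate case analysis or appeal to the construction axioms. The only points deserving attention are to invoke the definition of $AF$ with the same two $\sigma$-elements $A,B$ drawn from $X$ (so that the quantifier over pairs in Definition \ref{def 9 set AF} is applied correctly), and to note that $A\capst B$ is a well-defined $\sigma$-set by the remark following Definition \ref{def 9 int and diff antiset}, so that the equation $A\capst B=\emptyset$ is meaningful.
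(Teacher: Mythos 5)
Your proof is correct and matches the paper's own argument: both reduce the claim to membership chasing, noting that $A,B\in X\subseteq Z$ gives $A,B\in Z$ and hence $A\capst B=\emptyset$ by $Z\in AF$. The only cosmetic difference is that the paper splits off the case $X=\emptyset$ explicitly, which your quantifier-based phrasing handles vacuously.
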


\begin{proof}
We consider $Z\in AF$ and $X\subseteq Z$. If $X=\emptyset$ then it is clear that $X\in AF$. Suppose that $X\neq\emptyset$ and $A,B\in X$. Since $X\subseteq Z$ then $A,B\in Z$ and therefore $A\capst B=\emptyset$. So $X\in AF$.
\end{proof}

\begin{theorem}\label{theo 10 set AF}
Let $X$ be a $\sigma$-set. Then $2^{X}\in AF$.
\end{theorem}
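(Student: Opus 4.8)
The plan is to unwind the two definitions in play and reduce the whole statement to the Exclusion of Inverses. By Definition \ref{def 9 set AF}, proving $2^{X}\in AF$ amounts to showing that $A\capst B=\emptyset$ for \emph{every} pair $A,B\in 2^{X}$. So I would begin by fixing two arbitrary $\sigma$-elements $A,B$ of $2^{X}$; by the Axiom of Power $\sigma$-set (Definition \ref{def 10 proper subset}) these are precisely the $\sigma$-subsets of $X$, that is, $A\subseteq X$ and $B\subseteq X$.

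Next I would argue by contradiction, recalling from Definition \ref{def 9 int and diff antiset} that $A\capst B=\{x\in A: x^{\st}\in B\}$. If $A\capst B\neq\emptyset$, pick $x\in A\capst B$, so that simultaneously $x\in A$ and $x^{\st}\in B$. Since $A\subseteq X$ and $B\subseteq X$, this yields $x\in X$ and $x^{\st}\in X$ at once. But Theorem \ref{theo 9 exclusion inverses} (Exclusion of Inverses) asserts exactly that $x\in X$ forces $x^{\st}\notin X$, which is the desired contradiction. Hence $A\capst B=\emptyset$, and as $A,B$ were arbitrary elements of $2^{X}$, we conclude $2^{X}\in AF$.

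What remains is not so much an obstacle as a check that the degenerate situations are absorbed automatically: if either $A$ or $B$ is the empty $\sigma$-set then $A\capst B=\emptyset$ directly from Definition \ref{def 9 int and diff antiset}, and if some $x\in A$ possesses no $\sigma$-antielement then the defining condition $x^{\st}\in B$ simply never contributes a $\sigma$-element. The entire weight of the theorem is therefore carried by the Exclusion of Inverses, and the rest is a routine definition chase; so I expect no real difficulty in completing the argument.
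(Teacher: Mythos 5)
Your argument is correct and is essentially identical to the paper's own proof: assume $A\capst B\neq\emptyset$ for some $A,B\in 2^{X}$, extract $x\in A$ with $x^{\st}\in B$, note both then lie in $X$, and contradict Theorem \ref{theo 9 exclusion inverses}. The extra remarks about degenerate cases are harmless but not needed.
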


\begin{proof}
Suppose that there exist $A,B\in 2^{X}$ such that $A\capst B\neq\emptyset$. Then there exist $x\in A$ and $x^{\st}\in B$. Therefore $x,x^{\st}\in X$, which is a contradiction by Theorem \ref{theo 9 exclusion inverses}. Thus $2^{X}\in AF$.
\end{proof}

\begin{corollary}\label{coro 10 set of singlenton is AF}
Let $X$ be a $\sigma$-set. Then $Z=\{\{x\}:x\in X\}\in AF$.
\end{corollary}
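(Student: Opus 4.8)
The plan is to exhibit $Z$ as a $\sigma$-subset of the power $\sigma$-set $2^{X}$ and then quote the two immediately preceding results. First I would note that, by the Axiom of Replacement (Axiom \ref{axiom of replacement}) applied to the normal functional formula sending each $x$ to its singleton $\{x\}$ (whose existence is guaranteed by Axiom \ref{axiom of weak choice}), the $\sigma$-class $Z=\{\{x\}:x\in X\}$ is indeed a $\sigma$-set. Every $\sigma$-element of $Z$ has the form $\{x\}$ with $x\in X$; since the only $\sigma$-element of $\{x\}$ is $x$ and $x\in X$, we get $\{x\}\subseteq X$, hence $\{x\}\in 2^{X}$ by Definition \ref{def 10 proper subset}. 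Therefore $Z\subseteq 2^{X}$.

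The second step is to recall Theorem \ref{theo 10 set AF}, which asserts $2^{X}\in AF$. Combining $Z\subseteq 2^{X}$ with $2^{X}\in AF$, Theorem \ref{theo 10 set AF subset AF} (every $\sigma$-subset of a $\sigma$-antielement free $\sigma$-set is again $\sigma$-antielement free) immediately yields $Z\in AF$, which is the desired conclusion. The edge case $X=\emptyset$ is harmless, since then $Z=\emptyset$ is trivially $\sigma$-antielement free.

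If one prefers a self-contained argument not routed through $2^{X}$, I would instead take arbitrary $A,B\in Z$, say $A=\{x\}$ and $B=\{y\}$ with $x,y\in X$, and consider $A\capst B=\{a\in A:a^{\st}\in B\}$. Were this set nonempty, its only candidate element $x$ would satisfy $x^{\st}\in B=\{y\}$, forcing $y=x^{\st}$; but $x\in X$ gives $x^{\st}\notin X$ by Theorem \ref{theo 9 exclusion inverses} (Exclusion of Inverses), contradicting $y\in X$. Hence $A\capst B=\emptyset$ and $Z\in AF$. Either way there is no genuine obstacle: the content is entirely carried by the Exclusion of Inverses, and the only point requiring a moment's care is the verification that each singleton $\{x\}$ really is a $\sigma$-subset of $X$, so that the reduction to $2^{X}$ is legitimate.
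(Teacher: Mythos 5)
Your first argument is exactly the paper's proof: the paper dismisses this corollary as ``obvious by Theorems \ref{theo 10 set AF subset AF} and \ref{theo 10 set AF}'', i.e.\ $Z\subseteq 2^{X}$, $2^{X}\in AF$, and $\sigma$-subsets of $AF$ $\sigma$-sets are $AF$. Your spelled-out verification that $Z$ is a $\sigma$-set and that $Z\subseteq 2^{X}$, as well as the alternative direct argument via Theorem \ref{theo 9 exclusion inverses}, are correct additions but do not change the route.
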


\begin{proof}
This proof is obvious by Theorems \ref{theo 10 set AF subset AF} and \ref{theo 10 set AF}.
\end{proof}

\end{subsection}

\begin{subsection}{The Axiom of Fusion.}\label{axiom of fusion}
For all $\sigma$-sets $X$ and $Y$, there exists a $\sigma$-set $Z$, called fusion of all $\sigma$-elements of $X$ and $Y$, such that $Z$ contains $\sigma$-elements of the $\sigma$-elements of $X$ or $Y$, that is
$$(\forall X,Y)(\exists Z)(\forall b)(b \in Z \rightarrow (\exists z)[(z \in X\vee z\in Y)\wedge(b\in z)]).$$

We define the fusion of $\sigma$-sets.

\begin{definition}\label{def 11 fusion of sets}
Let $X$ and $Y$ be $\sigma$-sets. Then we define the fusion of $X$ and $Y$ as
$$X\cup Y=\{x: (x\in X\difst Y)\vee(x\in Y\difst X)\}.$$
\end{definition}

It is clear, by Definition \ref{def 11 fusion of sets} and Axiom \ref{axiom of extensionality} (Extensionality), that for all
$\sigma$-sets $X$ and $Y$, the fusion of $\sigma$-sets is commutative; that is $X\cup Y=Y\cup X$.

Also, we observe that if $X^{\st}$ is the $\sigma$-antielement of $X$ then $X,X^{\st}\in AT$ where
$$AT=\{X:\min(X)=1\vee\min(X)=1^{\st}\}.$$
Thus, by the Axiom \ref{axiom of weak choice} (Weak Choice) we can choose the singleton $F=\{X^{\st}\}$ and $E=\{X\}$, hence by Axiom \ref{axiom of fusion} (Fusion) there exists $X\cup X^{\st}$, the fusion of $X$ and $X^{\st}$. Remember that $AT$ is a proper $\sigma$-class by Axiom \ref{axiom of exclusion} (Exclusion).

It is important to note the difference between the fusion of $\sigma$-sets and proper $\sigma$-class. Thus
\begin{itemize}
	\item If $\hat{X},\hat{Y}$ are proper $\sigma$-classes, then
	$$\hat{X}\cup\hat{Y}=\{x:x\in\hat{X}\vee x\in\hat{Y}\}.$$
	\item If $X,Y$ are $\sigma$-sets, then
	$$X\cup Y=\{x: x\in X\difst Y\vee x\in Y\difst X\}.$$
\end{itemize}

In general, we are only interested in the properties of the fusion of $\sigma$-sets.

\begin{example}
It is clear that $1,1^{\st},2,2^{\st}\in AT$ where $AT$ is a proper $\sigma$-class. Thus by Axiom \ref{axiom of weak choice} (Weak Choice) we can choose the singleton $A=\{1\}$, $B=\{1^{\st}\}$, $C=\{2\}$ and $D=\{2^{\st}\}$. So by Axiom \ref{axiom of fusion} (Fusion) there exists

\begin{tabular}{llll}
\\
 (a)& $1\cup 1^{\st}$ & $=\{x: x\in 1\difst 1^{\st}\vee x\in 1^{\st}\difst 1\}$ \\
    &                 & $=\{x: x\in 1\vee x\in 1^{\st}\}$ \\
    &                 & $=\{\alpha,\beta\}=\{\alpha\}\cup\{\beta\}=1_{\Gamma}$,\\
\end{tabular}

\begin{tabular}{llll}
\\
 (b)& $2\cup 2^{\st}$ & $=\{x: x\in 2\difst 2^{\st}\vee x\in 2^{\st}\difst 2\}$ \\
    &                 & $=\{x: x\in 1\vee x\in 1^{\st}\}$ \\
    &                 & $=\{\alpha,\beta\}=\{\alpha\}\cup\{\beta\}=1_{\Gamma}$,\\
\end{tabular}

\begin{tabular}{llll}
\\
 (c)& $1\cup 2^{\st}$ & $=\{x: x\in 1\difst 2^{\st}\vee x\in 2^{\st}\difst 1\}$ \\
    &                 & $=\{x: x\in 1\vee x\in 2^{\st}\}$ \\
    &                 & $=\{\alpha,\beta,1^{\st}\}$,\\
\end{tabular}

\begin{tabular}{llll}
\\
 (d)& $\{1\}\cup \{1^{\st}\}$ & $=\{x: x\in \{1\}\difst \{1^{\st}\}\vee x\in \{1^{\st}\}\difst \{1\}\}$ \\
    &                 & $=\{x: x\in \emptyset\vee x\in \emptyset\}$ \\
    &                 & $=\emptyset$,\\
\end{tabular}

\begin{tabular}{llll}
\\
 (e)& $\{1\}\cup \{1^{\st},2^{\st}\}$ & $=\{x: x\in \{1\}\difst \{1^{\st},2^{\st}\}\vee x\in \{1^{\st},2^{\st}\}\difst \{1\}\}$ \\
    &                 & $=\{x: x\in \emptyset\vee x\in \{2^{\st}\}\}$ \\
    &                 & $=\{2^{\st}\}$.\\
\end{tabular}
\end{example}

\begin{theorem}\label{theo 11 fusion of x and empty}
Let $X$ be a $\sigma$-set. Then
\begin{description}
  \item[(a)] $X\cup\emptyset=\emptyset\cup X=X.$
  \item[(b)] $X\cup X=X.$
\end{description}
\end{theorem}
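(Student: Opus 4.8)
The plan is to unfold the definition of fusion (Definition \ref{def 11 fusion of sets}) and reduce each claimed identity to the already-established behaviour of the operation $\difst$ recorded in Corollary \ref{coro 9 star intersection}. For part (a), I would begin by writing
$$X\cup\emptyset=\{x:(x\in X\difst\emptyset)\vee(x\in\emptyset\difst X)\}.$$
By Corollary \ref{coro 9 star intersection}(a) we have $X\difst\emptyset=X$ and $\emptyset\difst X=\emptyset$, so the defining predicate collapses to $x\in X\vee x\in\emptyset$, i.e. simply $x\in X$. Hence $X\cup\emptyset$ and $X$ have exactly the same $\sigma$-elements, and Axiom \ref{axiom of extensionality} (Extensionality) gives $X\cup\emptyset=X$. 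Since fusion of $\sigma$-sets was already observed to be commutative immediately after Definition \ref{def 11 fusion of sets}, the identity $\emptyset\cup X=X\cup\emptyset=X$ follows at once.

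For part (b) the same strategy applies. Expanding the definition gives
$$X\cup X=\{x:(x\in X\difst X)\vee(x\in X\difst X)\},$$
and Corollary \ref{coro 9 star intersection}(g) tells us $X\difst X=X$. Thus the predicate reduces to $x\in X\vee x\in X$, equivalently $x\in X$, and one more appeal to Extensionality yields $X\cup X=X$.

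I do not anticipate any genuine obstacle here: both statements are immediate once the relevant clauses of Corollary \ref{coro 9 star intersection} are in hand, since those clauses already absorb the only non-trivial content (the interaction of $\difst$ with $\emptyset$ and with the diagonal $X\difst X$). The only point demanding minor care is to invoke the correct clauses, namely (a) for the empty-$\sigma$-set case and (g) for the idempotence case, and to remember that the commutativity remark following Definition \ref{def 11 fusion of sets} justifies the $\emptyset\cup X$ half of (a) without a separate computation.
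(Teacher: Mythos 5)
Your proposal is correct and follows exactly the same route as the paper's own proof: unfold Definition \ref{def 11 fusion of sets}, apply Corollary \ref{coro 9 star intersection} to evaluate $X\divideontimes\emptyset$, $\emptyset\divideontimes X$ and $X\divideontimes X$, conclude by Extensionality, and dispatch $\emptyset\cup X$ via the commutativity remark. No differences worth noting.
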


\begin{proof}
\
\begin{description}
  \item[(a)] Since the fusion is commutative, we only prove that $X\cup\emptyset=X$. By Corollary \ref{coro 9 star intersection}, we obtain that $X\difst\emptyset=X$ and $\emptyset\difst X=\emptyset$. Therefore $X\cup\emptyset=\{x:x\in X\}$. By Axiom \ref{axiom of extensionality} (Extensionality), $X\cup\emptyset=X$.

  \item[(b)] It is clear, by Corollary \ref{coro 9 star intersection}, that $X\difst X=X$. Therefore $X\cup X=\{x:x\in X\}$. So by Axiom \ref{axiom of extensionality} (Extensionality) $X\cup X=X$.
\end{description}
\end{proof}

\begin{example}\label{example 11 fusion of sets}
Consider the $\sigma$-sets $X=\{1,2\}$, $Y=\{1^{\st},2^{\st}\}$,
$Z=\{1\}$ and $W=\{\emptyset\}$. Then we obtain the following:
\begin{enumerate}
  \item $X\cup Y=\emptyset.$
  \item $X\cup W=\{1,2,\emptyset\}.$
  \item $X\cup Z=\{1,2\}.$
  \item $Y\cup Z=\{2^{\st}\}.$
\end{enumerate}
\end{example}

Also we can see that the fusion of $\sigma$-sets is not associative. By Theorem \ref{theo 11 fusion of x and empty}  and Example \ref{example 11 fusion of sets} we obtain
$$(Y\cup X)\cup Z= \emptyset \cup Z=Z$$
and
$$Y\cup (X\cup Z)= Y \cup X = \emptyset.$$

It is clear that $Z \neq \emptyset$, so the fusion of $\sigma$-sets is not associative. Thus it is necessary to consider the order on which the $\sigma$-sets are founded, thus we introduce the notion of chain of fusion
\begin{itemize}
  \item $\bigcup\overrightarrow{F}=X\cup Y\cup Z=(X\cup Y)\cup Z,$
  \item $\bigcup\overrightarrow{F}=X\cup Y\cup Z\cup W\cup\ldots=(\ldots(((X\cup Y)\cup Z)\cup W)\cup\ldots).$
\end{itemize}

Thus, given a $\sigma$-set $F$ the fusion of all $\sigma$-elements of $F$, or fusion of $F$, will be denoted for $\bigcup\overrightarrow{F}$.

We observe that, it is important that the fusion in $\sigma$-ST must have the properties of the union in a standard set theory, for example it is necessary that the fusion complies with the laws of Morgan in $2^{X}$. Hence it is important that $2^{X}$ is AF, because in this case the fusion is associative.

\begin{theorem}\label{theo 11 fusion associative set AF}
If $X\in AF$, then the fusion in $X$ is associative, that is
$$(X\in AF)\rightarrow(\forall A,B,C\in X)[(A\cup B)\cup C=A\cup (B\cup C)].$$
\end{theorem}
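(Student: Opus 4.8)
The plan is to show that, under the hypothesis $X\in AF$, the fusion of any two $\sigma$-elements of $X$ collapses to the ordinary set-theoretic union, and then to check that this collapse propagates to the two triple fusions appearing in the identity. First I would observe that if $A,B\in X$ and $X\in AF$, then $A\capst B=\emptyset=B\capst A$ by Definition \ref{def 9 set AF}. Hence, by Definition \ref{def 9 int and diff antiset}, $A\difst B=A-(A\capst B)=A$ and likewise $B\difst A=B$. Substituting into Definition \ref{def 11 fusion of sets} gives
$$A\cup B=\{x:x\in A\vee x\in B\},$$
so the fusion of $A$ and $B$ coincides with the ordinary union. This is the crucial reduction.

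Next I would treat the outer fusion in $(A\cup B)\cup C$. Here is where I expect the main obstacle: the intermediate $\sigma$-set $A\cup B$ need not itself be a $\sigma$-element of $X$, so one cannot invoke $X\in AF$ directly on the pair $(A\cup B,C)$. Instead I would trace $\sigma$-elements back to $A$ or $B$. I claim $(A\cup B)\capst C=\emptyset$: if some $x\in(A\cup B)\capst C$, then $x\in A\cup B$ and $x^{\st}\in C$, so by the reduction above $x\in A$ or $x\in B$, whence $x\in A\capst C$ or $x\in B\capst C$, both empty because $A,B,C\in X$ and $X\in AF$ — a contradiction. The symmetric argument, now using $x^{\st}\in A$ or $x^{\st}\in B$, shows $C\capst(A\cup B)=\emptyset$. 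Consequently $(A\cup B)\difst C=A\cup B$ and $C\difst(A\cup B)=C$, so
$$(A\cup B)\cup C=\{x:x\in A\vee x\in B\vee x\in C\}.$$

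Finally I would run the same computation on the right-hand side. Since $B,C\in X$, the reduction gives $B\cup C=\{x:x\in B\vee x\in C\}$, and the analogous tracing argument yields $A\capst(B\cup C)=\emptyset=(B\cup C)\capst A$, so
$$A\cup(B\cup C)=\{x:x\in A\vee x\in B\vee x\in C\}.$$
Both fusions therefore have exactly the same $\sigma$-elements, and the identity follows by Axiom \ref{axiom of extensionality} (Extensionality). The whole argument hinges on step two: one must resist assuming the intermediate fusions lie in $X$ and must instead push the $AF$ property through membership in $A$, $B$, or $C$ individually.
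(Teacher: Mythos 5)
Your proposal is correct and follows essentially the same route as the paper's proof: first collapse $A\cup B$ and $B\cup C$ to ordinary unions using $X\in AF$, then show the intermediate star-intersections $(A\cup B)\capst C$, $C\capst(A\cup B)$, $A\capst(B\cup C)$, $(B\cup C)\capst A$ all vanish by tracing membership back to $A$, $B$, or $C$, and finish with Extensionality. Your write-up actually spells out more explicitly than the paper why the tracing step yields a contradiction, but the argument is the same.
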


\begin{proof}
We consider $X\in AF$ and $A,B,C\in X$. Since $X\in AF$, we have that
$$A\capst B=B\capst A=A\capst C=C\capst A=B\capst C=C\capst B=\emptyset.$$

Therefore $A\cup B=\{x: x\in A \vee x\in B\}$ and $B\cup C=\{x: x\in B \vee x\in C\}.$ This fact implies that
$$(A\cup B)\capst C=C\capst(A\cup B)= A\capst (B\cup C)=(B\cup C)\capst A =\emptyset.$$

In fact, suppose that $(A\cup B)\capst C\neq\emptyset$. Then there exists $x\in A\cup B$ such that $x^{\st}\in C$, which is a contradiction. Suppose that $C\capst(A\cup B)\neq\emptyset$, then there exists $x\in C$ such that $x^{\st}\in A\cup B$ which is a contradiction. The proff that $A\capst (B\cup C)=(B\cup C)\capst A =\emptyset$ is analogous. By Definition \ref{def 11 fusion of sets}, we obtain that
$$(A\cup B)\cup C=\{x: x\in A\cup B \vee x\in C\}=\{x: (x\in A \vee x\in B) \vee x\in C\}$$
$$A\cup (B\cup C)=\{x: x\in A \vee x\in B\cup C\}=\{x: x\in A \vee (x\in B\vee x\in C)\}.$$

Finally, by Axiom \ref{axiom of extensionality} (Extensionality), $(A\cup B)\cup C=A\cup (B\cup C).$
\end{proof}

A direct consequence of Theorem \ref{theo 11 fusion associative set AF} is the following: If $X$ is a $\sigma$-set and $A\subseteq X$, then  
$$A=\bigcup_{x\in A}\{x\}.$$

\begin{definition}\label{def Antiset}
Let $X$ and $Y$ be $\sigma$-sets. We say that $Y$ \textbf{is the $\sigma$-antiset of} $X$ if the fusion of $X$ and $Y$ is equal to the empty $\sigma$-set, that is
$$X\cup Y=\emptyset.$$

The $\sigma$-antiset of $X$ will be denoted by $X^{\star}$.
\end{definition}

Consider $X=\{1,2\}$ then $X^{\star}=\{1^{\st},2^{\st}\}$. We observe that the fusion of pairs of $X$ and $X^{\star}$ is nonempty
because $\min(X)=\{1\}$ and $\min(X^{\star})=\{1^{\st}\}$. Therefore $\{X\}\cup\{X^{\star}\}=\{X,X^{\star}\}.$

It is clear by Theorem \ref{theo 7 fusion of pairs}, that there exist $\sigma$-sets without $\sigma$-antiset as the case of the
$\sigma$-set $1_{\Theta}$. However, we will prove that, if a $\sigma$-set $X$ has a $\sigma$-antiset then it is unique.

\begin{lemma}\label{lemma 11.2}
Let $X$ and $Y$ be $\sigma$-sets. If $X\difst Y=\emptyset$ and $x\in X$, then $x^{\st}\in Y$.
\end{lemma}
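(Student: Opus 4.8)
The plan is to unfold the two operations introduced in Definition \ref{def 9 int and diff antiset} and then argue by contradiction; no machinery beyond the definitions themselves is needed. Recall that $X\capst Y=\{z\in X: z^{\st}\in Y\}$ and $X\difst Y=X-(X\capst Y)$. The first step I would take is to read off, using the definition of the $\sigma$-set difference in Definition \ref{def 3 inter and diff of sets}, exactly which $\sigma$-sets lie in $X\difst Y$: a $\sigma$-set $z$ belongs to $X-(X\capst Y)$ precisely when $z\in X$ and $z\notin X\capst Y$, that is, precisely when $z\in X$ and $z^{\st}\notin Y$.

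With this characterization in hand, I would fix $x\in X$ and suppose, toward a contradiction, that $x^{\st}\notin Y$. Then $x$ fails the defining membership condition of $X\capst Y$, so $x\notin X\capst Y$; combined with the hypothesis $x\in X$, this yields $x\in X-(X\capst Y)=X\difst Y$. But this contradicts the standing assumption $X\difst Y=\emptyset$ (invoking Theorem \ref{theo 2 empty set is unique} to know that the empty $\sigma$-set genuinely has no $\sigma$-elements). Hence the assumption $x^{\st}\notin Y$ is untenable, and we conclude $x^{\st}\in Y$.

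The only point demanding any care is the bookkeeping around the defined symbols $\capst$ and $\difst$: membership in $X\capst Y$ requires \emph{both} $x\in X$ and $x^{\st}\in Y$, so that, under the standing hypothesis $x\in X$, the failure $x\notin X\capst Y$ is equivalent to $x^{\st}\notin Y$. I do not expect any substantive obstacle here, since the statement is in essence a direct transcription of the definitions of $\capst$ and $\difst$, and the entire argument reduces to a single application of the definition of the $\sigma$-set difference $X-(X\capst Y)$.
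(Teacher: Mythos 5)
Your proposal is correct and is essentially the paper's own argument: both simply unfold Definition \ref{def 9 int and diff antiset} to see that $X\difst Y=X-(X\capst Y)=\emptyset$ forces every $x\in X$ to lie in $X\capst Y$, hence $x^{\st}\in Y$. Your contrapositive/contradiction phrasing is a cosmetic variant of the paper's direct one-line deduction, with no substantive difference.
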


\begin{proof}
Consider $X$ and $Y$, $\sigma$-sets such that $X\difst Y=\emptyset$. By Definition \ref{def 9 int and diff antiset} we have that $X-(X\capst Y)=\emptyset$. Therefore, if $x\in X$ then $x\in X\capst Y$, so $x^{\st}\in Y$.
\end{proof}

\begin{lemma}\label{lemma 11.3}
Let $X$ and $Y$ be $\sigma$-sets. If $X\cup Y=\emptyset$, then
\begin{description}
  \item[(a)] If $x\in X$, then $x^{\st}\in Y$.
  \item[(b)] If $y\in Y$, then $y^{\st}\in X$.
\end{description}
\end{lemma}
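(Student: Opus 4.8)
The plan is to reduce this statement to the already-proved Lemma~\ref{lemma 11.2} by unpacking the definition of the fusion of $\sigma$-sets. First I would invoke Definition~\ref{def 11 fusion of sets}, which gives
$$X\cup Y=\{x: (x\in X\difst Y)\vee(x\in Y\difst X)\}.$$
Since by hypothesis $X\cup Y=\emptyset$, no $\sigma$-set $x$ belongs to this fusion; that is, for every $x$ we have both $x\notin X\difst Y$ and $x\notin Y\difst X$. Thus $X\difst Y$ and $Y\difst X$ each have no $\sigma$-elements, and so by the uniqueness of the empty $\sigma$-set (Theorem~\ref{theo 2 empty set is unique}) together with the Axiom of Extensionality (Axiom~\ref{axiom of extensionality}) I would conclude that $X\difst Y=\emptyset$ and $Y\difst X=\emptyset$.

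Second, I would apply Lemma~\ref{lemma 11.2} twice. From $X\difst Y=\emptyset$ the lemma yields that whenever $x\in X$ we have $x^{\st}\in Y$, which is exactly conclusion (a). Interchanging the roles of $X$ and $Y$, from $Y\difst X=\emptyset$ the same lemma yields that whenever $y\in Y$ we have $y^{\st}\in X$, which is conclusion (b).

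The only genuine content lies in the first step, namely extracting the two vanishing star-differences $X\difst Y=\emptyset$ and $Y\difst X=\emptyset$ from the single hypothesis $X\cup Y=\emptyset$; this is immediate from the disjunctive form of Definition~\ref{def 11 fusion of sets}, since a nonempty $X\difst Y$ (or $Y\difst X$) would supply an element of the fusion and contradict $X\cup Y=\emptyset$. After that, both conclusions follow verbatim from Lemma~\ref{lemma 11.2}, so I do not anticipate any real obstacle---the argument is essentially a two-line deduction chaining the definition of fusion into the preceding lemma.
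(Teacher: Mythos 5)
Your proposal is correct and follows the paper's own proof essentially verbatim: both unpack Definition \ref{def 11 fusion of sets} to conclude $X\difst Y=\emptyset$ and $Y\difst X=\emptyset$ from $X\cup Y=\emptyset$, and then apply Lemma \ref{lemma 11.2} twice. Your extra justification of the first step via Extensionality is a minor elaboration of what the paper leaves implicit.
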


\begin{proof}
Suppose that $X\cup Y=\emptyset$. By Definition \ref{def 11 fusion of sets} we have that
$$X\cup Y=\{x: (x\in X\difst Y)\vee(x\in Y\difst X)\}=\emptyset.$$
Therefore $X\difst Y=\emptyset$ and $Y\difst X=\emptyset$. Finally by Lemma \ref{lemma 11.2} we obtain that if $x\in X$ then $x^{\st}\in Y$, so if $y\in Y$ then $y^{\st}\in X$.
\end{proof}

\begin{theorem}\label{theo 11 antiset is unique}
Let $X$ be a $\sigma$-set. If there exists $X^{\star}$ the $\sigma$-antiset of $X$, then $X^{\star}$ is unique.
\end{theorem}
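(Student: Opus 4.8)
The plan is to establish uniqueness through the Axiom \ref{axiom of extensionality} (Extensionality), by showing that any two candidate $\sigma$-antisets of $X$ have exactly the same $\sigma$-elements. The two tools I would lean on are Lemma \ref{lemma 11.3}, which translates membership in a $\sigma$-set into membership of the corresponding $\sigma$-antielement in its $\sigma$-antiset, and Corollary \ref{lemma 11.1}, which supplies the involution property $(x^{\st})^{\st}=x$ whenever the $\sigma$-antielement exists.

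First I would suppose that $X^{\star}$ and $\widehat{X}$ are both $\sigma$-antisets of $X$, so that $X\cup X^{\star}=\emptyset$ and $X\cup\widehat{X}=\emptyset$. To prove $X^{\star}\subseteq\widehat{X}$, take an arbitrary $y\in X^{\star}$. Applying Lemma \ref{lemma 11.3}(b) to the identity $X\cup X^{\star}=\emptyset$ yields $y^{\st}\in X$. Then applying Lemma \ref{lemma 11.3}(a) to the identity $X\cup\widehat{X}=\emptyset$ with the $\sigma$-element $y^{\st}\in X$ gives $(y^{\st})^{\st}\in\widehat{X}$. Since $y^{\st}$ is the $\sigma$-antielement of $y$, Corollary \ref{lemma 11.1} gives $(y^{\st})^{\st}=y$, so $y\in\widehat{X}$, establishing $X^{\star}\subseteq\widehat{X}$.

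The reverse inclusion follows by the symmetric chase: taking $z\in\widehat{X}$, Lemma \ref{lemma 11.3}(b) applied to $X\cup\widehat{X}=\emptyset$ gives $z^{\st}\in X$, and Lemma \ref{lemma 11.3}(a) applied to $X\cup X^{\star}=\emptyset$ then gives $(z^{\st})^{\st}=z\in X^{\star}$, so $\widehat{X}\subseteq X^{\star}$. Combining the two inclusions and invoking Axiom \ref{axiom of extensionality} (Extensionality) forces $X^{\star}=\widehat{X}$, which is the desired uniqueness.

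I do not anticipate a serious obstacle: the argument is a short two-step chase through Lemma \ref{lemma 11.3}, closed off by the involution of Corollary \ref{lemma 11.1}. The one point deserving attention is legitimacy of the $\sigma$-antielements invoked in the chase. This is not an extra hypothesis but is delivered automatically by Lemma \ref{lemma 11.3} itself: each conclusion of the form $y^{\st}\in X$ already asserts that the $\sigma$-antielement $y^{\st}$ exists (and lies in the relevant $\sigma$-set), so every application of $(\cdot)^{\st}$ is well defined and the proof goes through cleanly.
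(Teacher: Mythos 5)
Your proposal is correct and follows essentially the same route as the paper's own proof: both arguments chase an element of one candidate $\sigma$-antiset through Lemma \ref{lemma 11.3} (twice) and close the loop with the involution $(a^{\st})^{\st}=a$ of Corollary \ref{lemma 11.1}. The only difference is presentational --- you phrase it as a direct double inclusion finished off by Extensionality, whereas the paper argues by contradiction from a witness $\sigma$-element on which the two candidates disagree (and treats $X=\emptyset$ separately, which your version handles vacuously).
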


\begin{proof} Suppose that $X=\emptyset$. By Theorem \ref{theo 11 fusion of x and empty} there exists $X^{\star}=\emptyset$ such that $X\cup X^{\star}=\emptyset$ and it is unique. We consider $X$, a nonempty $\sigma$-set such that there exists $X^{\star}$, the $\sigma$-antiset of $X$. It is clear, by Theorem \ref{theo 11 fusion of x and empty} that $X^{\star}\neq\emptyset$. Suppose that there exists a $\sigma$-set $\widehat{X}$ such that $X\cup\widehat{X}=\emptyset$ and $X^{\star}\neq\widehat{X}$. Therefore by Theorem \ref{theo 11 fusion of x and empty} we have that $\widehat{X}\neq\emptyset$. Since $X^{\star}\neq\widehat{X}$ then there exists $a\in X^{\star}$ such that $a\notin \widehat{X}$ or there exists
$b\in\widehat{X}$ such that $b\notin X^{\star}$. Let $a\in X^{\star}$ and $a\notin \widehat{X}$. Since $a\in X^{\star}$ then by Lemma \ref{lemma
11.3} $a^{\st}\in X$. Therefore $(a^{\st})^{\st}\in \widehat{X}$. Finally, by Corollary \ref{lemma 11.1} we have that $(a^{\st})^{\st}=a$, which is a contradiction. When there exists $b\in\widehat{X}$ such that $b\notin X^{\star}$ we get the same contradiction.
\end{proof}

\begin{corollary}
Let $X$ be a $\sigma$-set. If there exists $X^{\star}$ the $\sigma$-antiset of $X$, then $(X^{\star})^{\star}=X$.
\end{corollary}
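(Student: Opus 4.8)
The plan is to reduce the claim $(X^{\star})^{\star}=X$ to the uniqueness of the $\sigma$-antiset established in Theorem \ref{theo 11 antiset is unique}. Indeed, by that theorem, once we know that $X^{\star}$ has a $\sigma$-antiset, that $\sigma$-antiset is unique, so it suffices to exhibit $X$ as a $\sigma$-antiset of $X^{\star}$; that is, it suffices to prove the single equation
$$X^{\star}\cup X=\emptyset.$$
First I would invoke the hypothesis that $X^{\star}$ is the $\sigma$-antiset of $X$, which by Definition \ref{def Antiset} means precisely that $X\cup X^{\star}=\emptyset$.

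Next I would appeal to the commutativity of the fusion of $\sigma$-sets. This was observed immediately after Definition \ref{def 11 fusion of sets}, where it is noted that $X\cup Y=Y\cup X$ for all $\sigma$-sets $X$ and $Y$ (a direct consequence of the symmetric form of Definition \ref{def 11 fusion of sets} together with Axiom \ref{axiom of extensionality}). Applying this with $Y=X^{\star}$ gives
$$X^{\star}\cup X=X\cup X^{\star}=\emptyset.$$
Hence $X$ satisfies the defining equation of Definition \ref{def Antiset} for being the $\sigma$-antiset of $X^{\star}$, i.e. $X$ is \emph{a} $\sigma$-antiset of $X^{\star}$.

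Finally I would close the argument by uniqueness: since $X^{\star}$ possesses a $\sigma$-antiset (namely $X$), Theorem \ref{theo 11 antiset is unique} guarantees that this $\sigma$-antiset is unique and is by notation denoted $(X^{\star})^{\star}$. Therefore $(X^{\star})^{\star}=X$, as claimed. I do not anticipate a genuine obstacle here; the entire proof is the antiset analogue of Corollary \ref{lemma 11.1} for $\sigma$-antielements, and it follows the same two-line template (commutativity plus uniqueness). The only point requiring mild care is to confirm that the commutativity remark applies to arbitrary $\sigma$-sets and not merely to those of a special form, but since it was stated in full generality right after Definition \ref{def 11 fusion of sets}, no extra hypothesis on $X$ is needed.
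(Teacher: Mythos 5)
Your proof is correct and follows essentially the same route as the paper: the paper's own proof simply declares the result obvious from Theorem \ref{theo 11 antiset is unique}, and your argument fills in exactly the implicit steps (commutativity of fusion gives $X^{\star}\cup X=\emptyset$, so $X$ is a $\sigma$-antiset of $X^{\star}$, and uniqueness forces $(X^{\star})^{\star}=X$). This mirrors the two-line template the paper itself uses for the $\sigma$-antielement analogue, Corollary \ref{lemma 11.1}.
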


\begin{proof}
This fact is obvious by Theorem \ref{theo 11 antiset is unique}.
\end{proof}

We notice that $\emptyset^{\star}=\emptyset$ and $(\emptyset^{\star})^{\star}=\emptyset$. On the other hand, it is important to observe that we can study the properties of $\sigma$-antiset only considering the definition \ref{def Antiset} and assuming the existence of $\sigma$-antielements. Also, if $A$ be a $\sigma$-set and we want to find its $\sigma$-antiset, then we must know what are the $\sigma$-antielements of $\sigma$-elements of $A$. 

Respect to the $\sigma$-antielements is important to notice that the axioms \ref{axiom of completeness a} (Completeness A) and \ref{axiom of completeness b} (Completeness B) are incomplete because these are designed in order to build the natural and antinatural numbers. However, we think that it will be easy to complete these Axioms, in the case that we want to consider the real and antireal numbers, i.e. $\mathbb{R}$ and $\mathbb{R}^{\star}$.

Now we present the definitions and results, which is needed, for the construction of the $\sigma$-sets $\mathbb{N}$ and $\mathbb{N}^{\star}$, from which we get for all $n,m\in\mathbb{N}$ such that $n\neq m$ 
$$\{n\}\cup\{n^{\st}\}=\emptyset$$
and 
$$\{n\}\cup\{m^{\st}\}=\{n,m^{\st}\}.$$

\begin{definition}\label{def successor S(X)}
Let $X$ be a $\sigma$-set. We define the successor of $X$ by
\begin{center}
\begin{tabular}{llll}
\\
$S(X)$ & $= X\cup \{X\}$.\\
       & $=\{x: x\in X\difst \{X\}\vee x\in \{X\}\difst X\}$ \\
\end{tabular}
\end{center}
\end{definition}

\begin{lemma}\label{lemma successor is non empty}
Let $X$ be a $\sigma$-set. Then the following statements holds:
\begin{description}
	\item[(a)] $X\difst \{X\}=X;$
	\item[(b)] $\{X\}\difst X=\{X\}.$
\end{description}
\end{lemma}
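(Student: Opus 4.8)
The plan is to unfold the definition of $\difst$ from Definition \ref{def 9 int and diff antiset}, namely $A\difst B = A-(A\capst B)$ with $A\capst B=\{a\in A: a^{\st}\in B\}$, and in each case reduce the claim to showing that the relevant $\capst$-term is empty. Once $X\capst\{X\}=\emptyset$ and $\{X\}\capst X=\emptyset$ are established, parts (a) and (b) follow immediately, since $A-\emptyset=A$ by Definition \ref{def 3 inter and diff of sets}.

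For part (a) I would examine $X\capst\{X\}=\{x\in X: x^{\st}\in\{X\}\}$. An element $x\in X$ can contribute only if its $\sigma$-antielement exists and satisfies $x^{\st}=X$. Suppose, for contradiction, that such an $x$ exists. Since $x$ possesses a $\sigma$-antielement, the equation $\{x\}\cup\{x^{\st}\}=\emptyset$ holds, so by condition (b) of Axiom \ref{axiom of completeness b} (Completeness B) we have $x\totdif x^{\st}$. Theorem \ref{theo 4 x totdif y impl x int y empty}(b) then gives $x\notin x^{\st}$. But $x^{\st}=X$ and $x\in X$ force $x\in x^{\st}$, a contradiction. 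Hence no such $x$ exists, $X\capst\{X\}=\emptyset$, and therefore $X\difst\{X\}=X-\emptyset=X$.

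For part (b) the argument is entirely parallel: $\{X\}\capst X=\{z\in\{X\}: z^{\st}\in X\}$, and the only candidate is $z=X$, which contributes precisely when the $\sigma$-antielement $X^{\st}$ exists and $X^{\st}\in X$. Assuming $X^{\st}$ exists, the same Completeness (B) clause yields $X\totdif X^{\st}$, whence $X^{\st}\notin X$ by Theorem \ref{theo 4 x totdif y impl x int y empty}(b); this contradicts $X^{\st}\in X$. Therefore $\{X\}\capst X=\emptyset$ and $\{X\}\difst X=\{X\}$.

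The step I expect to require the most care is the partiality of the map $x\mapsto x^{\st}$: not every $\sigma$-set has a $\sigma$-antielement (by Theorem \ref{theo 7 fusion of pairs}, for instance $\emptyset$ does not). I would handle this by reading the defining predicate ``$x^{\st}\in B$'' as false whenever $x^{\st}$ fails to exist, so that such $x$ never enter the $\capst$-set; this matches the convention implicit in Example \ref{example 9 star intersection}. With that reading, the contradiction only ever needs to be derived in the case where the antielement genuinely exists, which is exactly the situation in which condition (b) of Completeness (B) together with Theorem \ref{theo 4 x totdif y impl x int y empty} applies. The edge case $X=\emptyset$ is then subsumed, since both $\capst$-terms are vacuously empty.
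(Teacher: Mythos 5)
Your proposal is correct and follows essentially the same route as the paper's own proof: unfold $\difst$ via Definition \ref{def 9 int and diff antiset}, show the relevant $\capst$-term is empty by deriving $x\in x^{\st}$ (resp.\ $X^{\st}\in X$) from a hypothetical member and contradicting $x\totdif x^{\st}$. You are in fact slightly more explicit than the paper in citing condition (b) of Axiom \ref{axiom of completeness b} and Theorem \ref{theo 4 x totdif y impl x int y empty}(b), and in addressing the partiality of the antielement map, which the paper leaves implicit.
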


\begin{proof}
(a) Suppose that $X=\emptyset$ then it is clear that $\emptyset\difst \{\emptyset\}=\emptyset.$ We consider $X\neq\emptyset$, by Definition \ref{def 9 int and diff antiset}, it is clear that $X\difst \{X\}\subseteq X$. Suppose that there exists $y\in X$ such that $y\notin X\difst \{X\}$. Since $y\in X$ and $y\notin X-X\capst \{X\}$ then $y\in X\capst \{X\}$. Therefore $y\in y^{\st}=X$ which is a contradiction because $y\totdif y^{\st}$.

(b) Suppose that $X=\emptyset$ then it is clear that $\{\emptyset\}\difst\emptyset=\{\emptyset\}.$ We consider $X\neq\emptyset$, by Definition \ref{def 9 int and diff antiset}, it is clear that $\{X\}\difst X\subseteq \{X\}$. Suppose that $X\notin \{X\}\difst X$. Since $X\in \{X\}$ and $X\notin \{X\}-\{X\}\capst X$ then $X\in \{X\}\capst X$. Therefore $X^{\st}\in X$ which is a contradiction because $X\totdif X^{\st}$.
\end{proof}

\begin{corollary}\label{coro 12 successor is non empty}
Let $X$ be a $\sigma$-set. Then $S(X)=\{x: x\in X\vee x\in \{X\}\}$.
\end{corollary}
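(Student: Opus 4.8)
The plan is to read off the result directly from the definition of the successor together with the lemma just established, so the argument is essentially a substitution followed by an appeal to Extensionality. First I would recall that, by Definition \ref{def successor S(X)} and Definition \ref{def 11 fusion of sets} (Fusion of $\sigma$-sets), the successor unfolds as
$$S(X)=X\cup\{X\}=\{x: x\in X\difst\{X\}\vee x\in\{X\}\difst X\}.$$
This is the only structural input needed; everything else is replacing the two $\difst$-terms by their computed values.

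Next I would invoke Lemma \ref{lemma successor is non empty}, whose two parts give exactly the identities $X\difst\{X\}=X$ and $\{X\}\difst X=\{X\}$. Substituting these into the displayed defining condition, the membership predicate $x\in X\difst\{X\}$ becomes $x\in X$ and the predicate $x\in\{X\}\difst X$ becomes $x\in\{X\}$. Hence
$$S(X)=\{x: x\in X\vee x\in\{X\}\}.$$

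Finally, to justify that this substitution yields genuine equality of $\sigma$-sets rather than merely a rewriting, I would appeal to Axiom \ref{axiom of extensionality} (Extensionality): the two $\sigma$-classes $\{x: x\in X\difst\{X\}\vee x\in\{X\}\difst X\}$ and $\{x: x\in X\vee x\in\{X\}\}$ have the same $\sigma$-elements, since a $\sigma$-set $x$ satisfies one defining formula if and only if it satisfies the other by the two identities above, so they are equal.

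I do not expect a genuine obstacle here: all the real content has already been discharged in Lemma \ref{lemma successor is non empty}, where the nontrivial step was showing that $X$ and $\{X\}$ share no $\sigma$-antielement pairs (using $y\totdif y^{\st}$ and $X\totdif X^{\st}$ to rule out the $\capst$-terms). Consequently the present statement is a short corollary, and the only care required is to apply parts (a) and (b) of the lemma to the correct $\difst$-term and to close the argument with Extensionality rather than leaving it at the level of formula manipulation.
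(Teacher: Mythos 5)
Your proposal is correct and follows exactly the route the paper takes: the paper's proof simply cites Definition \ref{def successor S(X)} and Lemma \ref{lemma successor is non empty} and calls the result obvious, and your argument is just that same substitution of $X\difst\{X\}=X$ and $\{X\}\difst X=\{X\}$ into the defining formula, spelled out with an explicit appeal to Extensionality.
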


\begin{proof}
This fact is obvious by Definition \ref{def successor S(X)} and Lemma \ref{lemma successor is non empty}
\end{proof}

Remember that, if $X$ is a $\sigma$-set then

$$y\in \min(X)\leftrightarrow(y\in X)\wedge(\forall \lch x,\ldots,w\rch\in CH(y))(x,\ldots,w\notin X);$$
$$y\notin \min(X)\leftrightarrow(y\notin X)\vee(\exists\lch x,\ldots,w\rch\in CH(y))(x\in X\vee\ldots\vee w\in X).$$

\begin{lemma}\label{lemma 12 min(x) equal to min(s(x))}
Let $X$ be a $\sigma$-set.
\begin{description}
  \item[(a)] If $X=\emptyset$, then $\min(\emptyset)\subset \min(\{\emptyset\})$.
  \item[(b)] If $X\neq\emptyset$ and lower $\epsilon$-bounded, then $\min(X)=\min(S(X))$.
\end{description}
\end{lemma}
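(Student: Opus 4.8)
The plan is to work directly from the characterization of $\min$ recalled just before the statement, after first rewriting the successor by Corollary \ref{coro 12 successor is non empty} as $S(X)=\{x:x\in X\vee x=X\}$. The point of this reformulation is that the \emph{only} difference in $\sigma$-membership between $X$ and $S(X)$ is the single extra $\sigma$-element $X$ itself, so the whole argument reduces to controlling what happens when that one $\sigma$-set is adjoined.

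For part (a) I would simply evaluate both sides. Since $\emptyset$ has no $\sigma$-elements, $\min(\emptyset)=\emptyset$. For $\{\emptyset\}$ its unique $\sigma$-element is $\emptyset$, and because $\emptyset$ has no $\sigma$-elements there is no $\epsilon$-chain in $CH(\emptyset)$, so the minimality condition holds vacuously and $\emptyset\in\min(\{\emptyset\})$; hence $\min(\{\emptyset\})=\{\emptyset\}$. As $\emptyset\subseteq\{\emptyset\}$ with $\emptyset\neq\{\emptyset\}$, this yields $\min(\emptyset)\subset\min(\{\emptyset\})$.

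For part (b) I would prove the equality by comparing the two $\sigma$-classes $\sigma$-element by $\sigma$-element. For $\min(X)\subseteq\min(S(X))$, take $y\in\min(X)$; then $y\in X\subseteq S(X)$ and no link of any $\lch x,\ldots,w\rch\in CH(y)$ lies in $X$. Since $S(X)$ adds only $X$, I must rule out that some such link equals $X$: if it did, then — using $y\in X$ — prolonging that chain through $y$ would give an $\epsilon$-chain of $X$ in which $X$ appears as a link, i.e. $X\inch\lch X,\ldots,y\rch\in CH(X)$, contradicting Axiom \ref{axiom of w-regularity} (Weak Regularity) (compare Theorems \ref{theo 5.1 w-regularity} and \ref{theo 5.3 w-regularity}). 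Hence no link lies in $S(X)$ and $y\in\min(S(X))$. For the reverse inclusion, take $y\in\min(S(X))$, so $y\in X$ or $y=X$. If $y\in X$, the links of every chain in $CH(y)$ avoid $S(X)\supseteq X$, hence avoid $X$, so $y\in\min(X)$. The case $y=X$ is excluded: since $X$ is lower $\epsilon$-bounded I may choose $m\in\min(X)\subseteq X$, whence $\lch m\rch\in CH(X)$ has the link $m\in X\subseteq S(X)$, contradicting $X\in\min(S(X))$. Combining the two inclusions with Axiom \ref{axiom of extensionality} (Extensionality) gives $\min(X)=\min(S(X))$.

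The main obstacle is the first inclusion, and specifically the verification that the newly adjoined $\sigma$-element $X$ cannot occur as a link of an $\epsilon$-chain below a minimal $\sigma$-element $y$; every other step is a routine unwinding of the definitions of $\min$ and of $CH$. It is exactly at that point that Weak Regularity is indispensable, the extra ingredient being the membership $y\in X$ (valid because $y\in\min(X)\subseteq X$), which closes the offending chain back into $CH(X)$.
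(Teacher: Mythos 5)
Your proof is correct and follows essentially the same route as the paper's: both inclusions in (b) hinge on the observation that $S(X)$ adjoins only the single $\sigma$-element $X$, and that a link equal to $X$ in a chain below $y\in\min(X)$ could be prolonged through $y$ into a chain of $CH(X)$ containing $X$ as a link, contradicting Axiom \ref{axiom of w-regularity}. The only difference is cosmetic — you argue the first inclusion directly where the paper argues by contradiction, and you spell out why $X\notin\min(S(X))$, which the paper merely asserts.
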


\begin{proof}

\

(a) If $X=\emptyset$, then $\min(\emptyset)=\emptyset$. Therefore $\min(\emptyset)\subset \min(\{\emptyset\})=\{\emptyset\}$.

(b) If $X\neq \emptyset$ and $\min(X)\neq\emptyset$ we obtain the following:

\begin{description}
  \item[($\subseteq$)] Suppose that there exists $y\in \min(X)$ such that $y\notin\min(S(X))$. By Corollary \ref{coro 12 successor is non empty} it is clear that $y\in S(X)$, because $y\in \min(X)\subseteq X\subset S(X)$. Since, $y\in \min(X)$ we have that for all $\lch x,\ldots,w\rch\in CH(y)$ then $x,\ldots,w\notin X$. Also, as $y\in S(X)$ and $y\notin\min(S(X))$, there exists $\lch a,\ldots,c\rch\in CH(y)$ such that $a\in S(X)-X\vee\ldots\vee c\in S(X)-X$. Finally there exists $\lch a,\ldots,c,y\rch\in CH(X)$ such that $X\inch\lch a,\ldots,c,y\rch$ which contradicts Axiom \ref{axiom of w-regularity} (Weak Regularity).

  \item[($\supseteq$)] Let $y\in \min(S(X))$. It is clear that $y\in X$ because $X\notin \min(S(X))$. We consider $\lch x,\ldots,w\rch\in CH(y)$. Since $y\in \min(S(X))$ we have that $x,\ldots,w\notin S(X)$. Therefore, by Corollary \ref{coro 12 successor is non empty} we obtain that $x,\ldots,w\notin X$. Thus $y\in \min(X)$.
\end{description}
\end{proof}

\begin{lemma}\label{lemma 12 x totdif y then s(x) totdif s(y)}
Let $X$ and $Y$ be nonempty $\sigma$-sets. If $X\totdif Y$, then $S(X)\totdif S(Y)$.
\end{lemma}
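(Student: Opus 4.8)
The plan is to reduce the whole statement to a fact about the \emph{links} of $\epsilon$-chains. First I would use Corollary \ref{coro 12 successor is non empty}, which gives $S(X)=\{x:x\in X\vee x\in\{X\}\}$, to record that the greatest link $w$ of any chain $\lch x,\ldots,w\rch\in CH(S(X))$ satisfies $w\in X$ or $w=X$, and symmetrically that the greatest link $c$ of any chain of $S(Y)$ satisfies $c\in Y$ or $c=Y$. This is the only place where the concrete description of the successor enters.

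The central claim I would establish is: every link of a chain of $S(X)$ is either a link of some chain of $X$ or is $X$ itself. Indeed, if $w\in X$, then $\lch x,\ldots,w\rch$ is already an $\epsilon$-chain of $X$ and all its links are links of a chain of $X$. If $w=X$, then either the chain is $\lch X\rch$, whose only link is $X$, or the link $w'$ lying immediately below $w$ satisfies $w'\in w=X$, so $\lch x,\ldots,w'\rch\in CH(X)$ and every link other than the top one $w=X$ is a link of a chain of $X$. By the same argument, every link of a chain of $S(Y)$ is a link of a chain of $Y$ or is $Y$.

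Then I would prove $S(X)\totdif S(Y)$ directly from Definition \ref{def 4 totality different}: given $\lch x,\ldots,w\rch\in CH(S(X))$ and $\lch a,\ldots,c\rch\in CH(S(Y))$ together with links $u\inch\lch x,\ldots,w\rch$ and $v\inch\lch a,\ldots,c\rch$, I must show $u\neq v$, and I split into four cases according to the claim. If $u$ is a link of a chain of $X$ and $v$ a link of a chain of $Y$, then $u\neq v$ is exactly the hypothesis $X\totdif Y$. If $u=X$ and $v=Y$, then $u\neq v$ follows because $X$ and $Y$ are nonempty and totally different, so $X\cap Y=\emptyset$ by Theorem \ref{theo 4 x totdif y impl x int y empty}, whence $X\neq Y$.

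The delicate cases, and the main obstacle, are the mixed ones, for instance $u$ a link of a chain of $X$ while $v=Y$: I must rule out $u=Y$, i.e. show that $Y$ is never a link of a chain of $X$. Here nonemptiness is essential: pick $b\in Y$; if $Y$ occurred as a link in some $\lch x,\ldots,w\rch\in CH(X)$, then prepending $b$ (using $b\in Y$) yields an $\epsilon$-chain of $X$ having $b$ as a link, while $b\in Y$ also gives $\lch b\rch\in CH(Y)$, so $b$ is a common link of a chain of $X$ and a chain of $Y$, contradicting $X\totdif Y$. The symmetric case $u=X$ with $v$ a link of a chain of $Y$ is handled identically. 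With all four cases closed, $\lch x,\ldots,w\rch$ and $\lch a,\ldots,c\rch$ are totally different, and since these chains were arbitrary, $S(X)\totdif S(Y)$.
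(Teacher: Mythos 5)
Your proof is correct and follows essentially the same route as the paper: both use Corollary \ref{coro 12 successor is non empty} to observe that the greatest link of a chain of $S(X)$ is either a $\sigma$-element of $X$ or equal to $X$ (and likewise for $S(Y)$), and then conclude by a four-way case analysis from the hypothesis $X\totdif Y$. The one substantive difference is that you actually justify the mixed cases --- showing that $Y$ can never occur as a link of an $\epsilon$-chain of $X$ by prepending some $b\in Y$ to produce a common link, which is precisely where nonemptiness is used --- whereas the paper dismisses all four cases as ``clear''; your version is therefore the more complete argument.
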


\begin{proof}
We consider $X,Y$ nonempty $\sigma$-sets such that $X\totdif Y$. That is
$$(\forall\lch x\ldots w\rch\in CH(X))(\forall\lch a\ldots c\rch\in CH(Y))(\lch x\ldots w\rch\totdif\lch a\ldots c\rch).$$

Let $\lch x\ldots w\rch\in CH(S(X))$ and $\lch a\ldots c\rch\in CH(S(Y))$. By Corollary \ref{coro 12 successor is non empty}, the respective greater links $w$ and $c$ satisfy the following conditions:
\begin{enumerate}
  \item $w\in X$ or $w=X$.
  \item $c\in Y$ or $c=Y$.
\end{enumerate}
Finally, it is clear that
\begin{itemize}
	\item If $w\in X$ and $c\in Y$, then $\lch x\ldots w\rch\totdif\lch a\ldots c\rch$.
	\item If $w\in X$ and $c=Y$, then $\lch x\ldots w\rch\totdif\lch a\ldots c\rch$.
	\item If $w=X$ and $c\in Y$, then $\lch x\ldots w\rch\totdif\lch a\ldots c\rch$.
	\item If $w=X$ and $c=Y$, then $\lch x\ldots w\rch\totdif\lch a\ldots c\rch$.
\end{itemize}
because $X\totdif Y$. Therefore $S(X)\totdif S(Y)$.
\end{proof}

We introduce the following schemas,

\begin{tabular}{cccccccccccccc}
\\
$\cdots$&$\rightarrow$&$\alpha$&$\rightarrow$& $1$          & $\rightarrow$ & $2$ & $\rightarrow$ & $3$ & $\rightarrow$ & $4$ & $\rightarrow$ & $\cdots$ \\

        &             &        &$\searrow$   \\
        &             &        &             &$1_{\Lambda}$ & $\rightarrow$ & $2_{\Lambda}$ & $\rightarrow$ & $3_{\Lambda}$ & $\rightarrow$ & $4_{\Lambda}$ & $\rightarrow$ &$\cdots$\\

        &             &        &$\nearrow$   \\
        &             & $\emptyset$ &$\rightarrow$ &$1_{\Theta}$ &$\rightarrow$ & $2_{\Theta}$ &$\rightarrow$ &$3_{\Theta}$ &$\rightarrow$ & $4_{\Theta}$ & $\rightarrow$ &$\cdots$\\
        &             &        &$\searrow$   \\

        &             &        &             &$1_{\Omega}$  & $\rightarrow$ & $2_{\Omega}$ & $\rightarrow$ & $3_{\Omega}$  & $\rightarrow$ & $4_{\Omega}$ & $\rightarrow$ & $\cdots$\\

        &             &        &$\nearrow$   \\
$\cdots$&$\rightarrow$&$\beta$ &$\rightarrow$&$1^{\st}$      & $\rightarrow$ & $2^{\st}$ & $\rightarrow$ &$3^{\st}$ & $\rightarrow$ & $4^{\st}$ & $\rightarrow$ & $\cdots$\\
\\
\end{tabular}

\begin{center}
\begin{tabular}{cccccccccccccc}
\\
$\cdots$&$\rightarrow$&$\alpha$\\
        &             &        &$\searrow$   \\
        &             &        &             &$1_{\Gamma}$  & $\rightarrow$ & $2_{\Gamma}$ & $\rightarrow$ & $3_{\Gamma}$  & $\rightarrow$ & $4_{\Gamma}$ & $\rightarrow$ & $\cdots$\\
        &             &        &$\nearrow$   \\
$\cdots$&$\rightarrow$&$\beta$ \\
\\
\end{tabular}
\end{center}

\begin{center}
\begin{tabular}{cccccccccccccc}
\\
$\cdots$&$\rightarrow$&$\alpha$\\
        &             &           &$\searrow$   \\
        &             &$\emptyset$&$\rightarrow$&$1_{\Pi}$  & $\rightarrow$ & $2_{\Pi}$ & $\rightarrow$ & $3_{\Pi}$  & $\rightarrow$ & $4_{\Pi}$ & $\rightarrow$ & $\cdots$\\
        &             &           &$\nearrow$   \\
$\cdots$&$\rightarrow$&$\beta$ \\
\\
\end{tabular}
\end{center}

where we have that
\begin{itemize}
	\item $1=\{\alpha\}$, $2=\{\alpha,1\}$, $3=\{\alpha,1,2\}$, $4=\{\alpha,1,2,3\}$, $\ldots$
	\item $1_{\Pi}=\{\emptyset,\alpha,\beta\}$, $2_{\Pi}=\{\emptyset,\alpha,\beta,1_{\Pi}\}$, $3_{\Pi}=\{\emptyset,\alpha,\beta,1_{\Pi},2_{\Pi}\}$, $\ldots$
	\item $1_{\Lambda}=\{\emptyset,\alpha\}$, $2_{\Lambda}=\{\emptyset,\alpha,1_{\Lambda}\}$, $3_{\Lambda}=\{\emptyset,\alpha,1_{\Lambda},2_{\Lambda}\}$, $\ldots$
	\item $1_{\Theta}=\{\emptyset\}$, $2_{\Theta}=\{\emptyset,1_{\Theta}\}$, $3_{\Theta}=\{\emptyset,1_{\Theta},2_{\Theta}\}$, $\ldots$
	\item $1_{\Omega}=\{\emptyset,\beta\}$, $2_{\Omega}=\{\emptyset,\beta,1_{\Omega}\}$, $3_{\Omega}=\{\emptyset,\beta,1_{\Omega},2_{\Omega}\}$, $\ldots$
	\item $1_{\Gamma}=\{\alpha,\beta\}$, $2_{\Gamma}=\{\alpha,\beta,1_{\Gamma}\}$, $3_{\Gamma}=\{\alpha,\beta,1_{\Gamma},2_{\Gamma}\}$, $\ldots$
	\item $1^{\st}=\{\beta\}$, $2^{\st}=\{\beta,1^{\st}\}$, $3^{\st}=\{\beta,1^{\st},2^{\st}\}$, $4^{\st}=\{\beta,1^{\st},2^{\st},3^{\st}\}$, $\ldots$
\end{itemize}
We can see these $\sigma$-sets are related in the following way:
\begin{center}
\begin{tabular}{rrcccccll}
    &            &               &            & $1_{\Theta}$\\
    &            &               &            & $\uparrow$ \\
    &            &               &            & $\emptyset$ \\  
    &            &               & $\swarrow$ & $\downarrow$  & $\searrow$ \\  
    &            & $1_{\Lambda}$ &            & $1_{\Pi}$     &            & $1_{\Omega}$ \\  
    &            & $\uparrow$    & $\nearrow$ &               & $\nwarrow$ & $\uparrow$ \\ 
    &            & $\alpha$      &            &               &            & $\beta$ \\ 
    & $\swarrow$ &               & $\searrow$ &               & $\swarrow$ &         & $\searrow$ \\ 
$1$ &            &               &            & $1_{\Gamma}$  &            &         &            & $1^{*}$ \\ 
\end{tabular}
\end{center}

As we have noted earlier the axioms \ref{axiom of completeness a} (Completeness A) and \ref{axiom of completeness b} (Completeness B) gives the construction rules of $\sigma$-antielements, which serve to explicitly find the $\sigma$-antisets. Also, it is important to note that these axioms are constructed such that 
\begin{center}
\begin{tabular}{llll}
$\{1\}$ & $\cup$ & $\{1^{\st}\}$ & $=\emptyset$\\
$\{S(1)\}$ & $\cup$ & $\{S(1^{\st})\}$ & $=\emptyset$\\
$\{S(S(1))\}$ & $\cup$ & $\{S(S(1^{\st}))\}$ & $=\emptyset$\\
$\hspace{0.7cm}\vdots$ & $\hspace{0.2cm}\vdots$ & $\hspace{0.7cm}\vdots$ & $= \emptyset$\\
\end{tabular}
\end{center}

On the other hand, if we follow the above schema, we can think in to complete the axioms \ref{axiom of completeness a} (Completeness A) and \ref{axiom of completeness b} (Completeness B) such that 
$$\{1_{\Lambda}\}\cup\{1_{\Omega}\}=\emptyset,$$
$$\{1_{\Pi}\}\cup\{1_{\Gamma}\}=\emptyset $$
and
$$\{1_{\Theta}\}\cup\{x\}=\{1_{\Theta},x\},$$
with $x=1,1^{\st},1_{\Lambda},1_{\Omega},1_{\Pi},1_{\Gamma}$. In future investigations we will study these changes.

Following with our investigation, we give the formal definitions for the construction of inductive $\sigma$-sets.

\begin{definition}\label{def inductive sets}
Let $I$ be a $\sigma$-set,
\begin{description}

  \item[(a)] $I$ is called inductive if:
  \begin{enumerate}
    \item $\min(I)\neq\emptyset$;
    \item If $x\in I$, then $S(x)\in I$.
  \end{enumerate}
\begin{tabular}{cccccccccccccc}
\\
$1$ & $\in$ & $2$ & $\in$ & $3$  & $\in$ & $4$ & $\in$ & $\cdots$  \\
$1_{\Lambda}$  & $\in$ & $2_{\Lambda}$ & $\in$ &  $3_{\Lambda}$  & $\in$ & $4_{\Lambda}$ & $\in$ &$\cdots$\\
$1_{\Theta}$ & $\in$ & $2_{\Theta}$ & $\in$ & $3_{\Theta}$ & $\in$ & $4_{\Theta}$ & $\in$ & $\cdots$  \\
\\
\end{tabular}

  \item[(b)] $I$ is called $\alpha$-inductive if:
  \begin{enumerate}
    \item $\min(I)=\{1\}$;
    \item If $x\in I$ and $x\neq 1$, then $1\in x$.
    \item If $x\in I$, then $S(x)\in I$.
  \end{enumerate}

  In this case we denote the $\alpha$-inductive
  $\sigma$-set by $I_{\alpha}$.

\begin{tabular}{cccccccccccccc}
\\
$1$ & $\in$ & $2$ & $\in$ & $3$  & $\in$ & $4$ & $\in$ & $\cdots$  \\
\\
\end{tabular}

  \item[(c)] $I$ is called $\beta$-inductive if:
  \begin{enumerate}
    \item $\min(I)=\{1^{\st}\}$;
    \item If $x\in I$ and $x\neq 1^{\st}$, then $1^{\st}\in x$.
    \item If $x\in I$, then $S(x)\in I$.
  \end{enumerate}

  In this case we denote the $\beta$-inductive
  $\sigma$-set by $I_{\beta}$.

\begin{tabular}{cccccccccccccc}
\\
$1^{\st}$ & $\in$ & $2^{\st}$ & $\in$ & $3^{\st}$ & $\in$ & $4^{\st}$ & $\in$ & $\cdots$ \\
\\
\end{tabular}

  \item[(d)] $I$ is called $\Theta$-inductive if:
  \begin{enumerate}
    \item $\min(I)=\{1_{\Theta}\}$;
    \item If $x\in I$ and $x\neq 1_{\Theta}$, then $1_{\Theta}\in x$.
    \item If $x\in I$, then $S(x)\in I$.
  \end{enumerate}

  In this case we denote the $\Theta$-inductive
  $\sigma$-set by $I_{\Theta}$.

\begin{tabular}{cccccccccccccc}
\\
$1_{\Theta}$ & $\in$ & $2_{\Theta}$ & $\in$ & $3_{\Theta}$ & $\in$ & $4_{\Theta}$ & $\in$ & $\cdots$ \\
\\
\end{tabular}

\end{description}
\end{definition}

We notice that if $I$ is $\Theta$-inductive, $\alpha$-inductive or $\beta$-inductive, then it is inductive.

Remember that, if $X$ is a $\sigma$-set then
$$y\in \max(X)\leftrightarrow(y\in X)\wedge(\forall z\in X)(\forall\lch x,\ldots,w\rch\in CH(z))(y\not\inch\lch x,\ldots,w\rch);$$
$$y\notin \max(X)\leftrightarrow(y\notin X)\vee(\exists z\in X)(\exists\lch x,\ldots,w\rch\in CH(z))(y\inch\lch x,\ldots,w\rch).$$

\begin{theorem}\label{theo inductive set is non upper bounded}
Let $I$ be an inductive $\sigma$-set. Then $I$ is a non upper $\epsilon$-bounded $\sigma$-set.
\end{theorem}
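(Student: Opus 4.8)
The plan is to unfold the definition of ``non upper $\epsilon$-bounded''. By Definition~\ref{def e-bounded set}, a nonempty $\sigma$-set $X$ is upper $\epsilon$-bounded exactly when $\max(X)\neq\emptyset$, so my goal reduces to proving $\max(I)=\emptyset$. Note first that $I$ is nonempty, since $\min(I)\neq\emptyset$ (clause (1) of inductiveness) forces $I$ to contain at least one $\sigma$-element, so the terminology of Definition~\ref{def e-bounded set} genuinely applies.

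I would argue by contradiction: suppose $\max(I)\neq\emptyset$ and pick $y\in\max(I)$. In particular $y\in I$, so by the inductive property (clause (2) of Definition~\ref{def inductive sets}) we have $S(y)\in I$. The crucial observation is that $y\in S(y)$: by Corollary~\ref{coro 12 successor is non empty} we know $S(y)=\{x:x\in y\vee x\in \{y\}\}$, and since $y\in\{y\}$ it follows that $y\in S(y)$. Consequently $\lch y\rch\in CH(S(y))$, i.e. $\lch y\rch$ is an $\epsilon$-chain of $S(y)$ whose single link is $y$.

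Now recall that $y\in\max(I)$ means, by Definition~\ref{def 6 min max}(c), that for every $z\in I$ and every $\lch x,\ldots,w\rch\in CH(z)$ one has $y\not\inch\lch x,\ldots,w\rch$. Applying this with $z=S(y)\in I$ and the chain $\lch y\rch\in CH(S(y))$ yields $y\not\inch\lch y\rch$, that is $y\neq y$, a contradiction. Therefore no such $y$ exists, $\max(I)=\emptyset$, and $I$ is non upper $\epsilon$-bounded.

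The argument is essentially bookkeeping with the chain/link abbreviations, so I do not expect a genuine obstacle. The only point requiring care is verifying that $y$ really does occur as a link of a chain of some element of $I$, which is precisely what $S(y)\in I$ together with $y\in S(y)$ delivers. It is worth remarking that the hypothesis $\min(I)\neq\emptyset$ is used only to guarantee that $I$ is nonempty, while the successor clause does all the substantive work; in fact the same reasoning shows that \emph{every} $y\in I$ fails to be $\epsilon$-maximal.
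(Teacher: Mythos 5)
Your proof is correct and follows essentially the same route as the paper's: assume $y\in\max(I)$, use inductiveness to get $S(y)\in I$, and derive a contradiction from $y\in S(y)$. You are merely more explicit than the paper in spelling out why $y\in S(y)$ (via Corollary~\ref{coro 12 successor is non empty}) and in exhibiting the one-link chain $\lch y\rch\in CH(S(y))$ that violates $\epsilon$-maximality, which is a welcome clarification rather than a departure.
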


\begin{proof}
It is clear that $I$ is lower $\epsilon$-bounded by Definition \ref{def inductive sets}. Suppose that there exists $y\in\max(I)$. Therefore, for all $z\in I$ and for all $\lch x,\ldots,w\rch\in CH(z)$ we have that $y\not\inch\lch x,\ldots,w\rch$.

On the other hand, since $y\in I$ and $I$ is an inductive $\sigma$-set, then $S(y)\in I$. Therefore there exists $S(y)\in I$ such that $y\in S(y)$ which is a contradiction. Thus $\max(I)=\emptyset$.
\end{proof}

The existence of inductive $\sigma$-sets is guaranteed by Axiom \ref{axiom non bounded set} (non $\epsilon$-Bounded $\sigma$-Set).

We introduce the concept of $\sigma$-set of all natural numbers, anti-natural numbers and $\Theta$-natural numbers.

\begin{definition}\label{def 12 natural numbers}
Let $X_{\alpha}$, $X_{\beta}$ and $X_{\Theta}$ be $\alpha,\beta$ or $\Theta$-inductive $\sigma$-sets. Then we define:
\begin{description}
  \item[(a)] The $\sigma$-set of all natural numbers as:
$$\mathbb{N}=\{x\in X_{\alpha}: (x\in I_{\alpha})(\forall I_{\alpha})\}.$$

  \item[(b)] The $\sigma$-set of all anti-natural numbers as:
$$\mathbb{N}^{\star}=\{x\in X_{\beta}: (x\in I_{\beta})(\forall I_{\beta})\}.$$

  \item[(c)] The $\sigma$-set of all $\Theta$-natural numbers as:
$$\mathbb{N}_{\Theta}=\{x\in X_{\Theta}: (x\in I_{\Theta})(\forall I_{\Theta})\}.$$
\end{description}
\end{definition}

\begin{theorem}\label{theo natural numbers are inductive}
The following statements holds:
\begin{description}
  \item[(a)] $\mathbb{N}$ is $\alpha$-inductive and if $I_{\alpha}$ is any $\alpha$-inductive $\sigma$-set, then $\mathbb{N}\subseteq I_{\alpha}$;
  \item[(b)] $\mathbb{N}^{\star}$ is $\beta$-inductive and if $I_{\beta}$ is any $\beta$-inductive $\sigma$-set, then $\mathbb{N}^{\star}\subseteq I_{\beta}$;
  \item[(c)] $\mathbb{N}_{\Theta}$ is $\Theta$-inductive and if $I_{\Theta}$ is any $\Theta$-inductive $\sigma$-set, then $\mathbb{N}_{\Theta}\subseteq I_{\Theta}$.
\end{description}
\end{theorem}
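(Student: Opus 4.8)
The plan is to prove part (a) in full and to note that (b) and (c) follow by the identical argument after replacing the triple $(\alpha,1,X_{\alpha})$ by $(\beta,1^{\st},X_{\beta})$ and by $(\emptyset,1_{\Theta},X_{\Theta})$, respectively. For (a) there are two things to establish: the minimality clause $\mathbb{N}\subseteq I_{\alpha}$ and the claim that $\mathbb{N}$ is itself $\alpha$-inductive. The minimality clause is immediate from Definition \ref{def 12 natural numbers}: if $x\in\mathbb{N}$ then $x$ lies in every $\alpha$-inductive $\sigma$-set, so in particular $x\in I_{\alpha}$. At the outset I would record the two facts the rest of the proof leans on. First, $\mathbb{N}\subseteq X_{\alpha}$ by construction, so every $n\in\mathbb{N}$ inherits the defining properties of the $\alpha$-inductive $\sigma$-set $X_{\alpha}$; in particular $n=1$ or $1\in n$. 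Second, $1\in\mathbb{N}$, since $\min(I_{\alpha})=\{1\}$ forces $1\in I_{\alpha}$ for every $\alpha$-inductive $I_{\alpha}$ and also $1\in X_{\alpha}$.

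To see that $\mathbb{N}$ is $\alpha$-inductive I verify the three conditions of Definition \ref{def inductive sets}(b). Condition 3 (closure under $S$) is easy: if $x\in\mathbb{N}$ then $x\in I_{\alpha}$ for each $\alpha$-inductive $I_{\alpha}$, hence $S(x)\in I_{\alpha}$ by inductivity, and also $S(x)\in X_{\alpha}$, so $S(x)\in\mathbb{N}$. Condition 2 is inherited: if $x\in\mathbb{N}$ and $x\neq 1$ then, since $x\in X_{\alpha}$ and $X_{\alpha}$ is $\alpha$-inductive, we obtain $1\in x$. Before attacking condition 1, I would prove the auxiliary fact that $\alpha\notin I_{\alpha}$ for every $\alpha$-inductive $I_{\alpha}$. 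Indeed $\alpha\neq 1$, for otherwise $\alpha=\{\alpha\}$ yields $\alpha\in\alpha$, contradicting Theorem \ref{theo 5.1 w-regularity}; so if $\alpha\in I_{\alpha}$ then condition 2 forces $1\in\alpha$, while $\alpha\in 1$ always holds, and $1\in\alpha$ together with $\alpha\in 1$ contradicts Theorem \ref{theo 5.3 w-regularity}. In particular $\alpha\notin X_{\alpha}$, hence $\alpha\notin\mathbb{N}$.

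The main obstacle is condition 1, $\min(\mathbb{N})=\{1\}$, which I split into two inclusions. For $1\in\min(\mathbb{N})$ I already have $1\in\mathbb{N}$, and I must check that no link of any $\epsilon$-chain of $1$ lies in $\mathbb{N}$. Let $u$ be such a link, sitting in a chain $\lch u,\ldots,\alpha\rch\in CH(1)$ whose greatest link is $\alpha$, since $\alpha$ is the only $\sigma$-element of $1$. If $u\in\mathbb{N}$ then $u=1$ or $1\in u$; the first case makes $1$ a link of a chain of $1$, and the second yields the chain $\lch 1,u,\ldots,\alpha\rch\in CH(1)$ with $1$ a link, each contradicting Axiom \ref{axiom of w-regularity} (Weak Regularity). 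Hence $u\notin\mathbb{N}$, so $1\in\min(\mathbb{N})$. For the reverse inclusion, suppose $y\in\min(\mathbb{N})$ with $y\neq 1$. Since $y\in X_{\alpha}$, condition 2 of Definition \ref{def inductive sets} gives $1\in y$, so $\lch 1\rch\in CH(y)$ exhibits $1\in\mathbb{N}$ as a link of a chain of $y$, contradicting $y\in\min(\mathbb{N})$. Therefore $y=1$ and $\min(\mathbb{N})=\{1\}$, completing (a).

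Finally, (b) and (c) go through verbatim after the substitutions above; the only simplification is in (c), where the role of $\alpha$ is played by $\emptyset$. Since $\emptyset$ has no $\sigma$-elements, the only $\epsilon$-chain of $1_{\Theta}=\{\emptyset\}$ is $\lch\emptyset\rch$ and $\emptyset\notin X_{\Theta}$, so the verification that no link of a chain of $1_{\Theta}$ lies in $\mathbb{N}_{\Theta}$ is immediate. The delicate point throughout is the computation of $\min$, where Weak Regularity is used to exclude the root ($\alpha$, $\beta$, or $\emptyset$) and all its $\epsilon$-predecessors from membership in the respective $\sigma$-set of numbers.
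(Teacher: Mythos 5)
Your proposal is correct and follows essentially the same route as the paper: verify closure under $S$ and the ``$1\in x$'' clause by inheritance from the $\alpha$-inductive $\sigma$-sets, then establish $\min(\mathbb{N})=\{1\}$ by two inclusions, with the reverse inclusion argued exactly as in the paper. The only (harmless) variation is that for $1\in\min(\mathbb{N})$ you appeal to Weak Regularity and the dichotomy $u=1\vee 1\in u$, whereas the paper gets the same conclusion directly from the clause $\min(I_{\alpha})=\{1\}$ of Definition \ref{def inductive sets}; your auxiliary observation that $\alpha\notin I_{\alpha}$ is not actually needed.
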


\begin{proof}
We will only prove (a) because the  proofs of (b) and (c) are similar.

\begin{itemize}
	\item If $x\in\mathbb{N}$, then $x\in I_{\alpha}$ for all $I_{\alpha}$. Therefore if $x\neq 1$ then $1\in x$.
	\item If $x\in\mathbb{N}$, then $x\in I_{\alpha}$ for all $I_{\alpha}$, so $S(x)\in I_{\alpha}$ for all $I_{\alpha}$. Therefore $S(x)\in\mathbb{N}$.
	\item It is clear that $1\in\mathbb{N}$ because $1\in I_{\alpha}$ for any $I_{\alpha}$. We will prove that $\min(\mathbb{N})=\{1\}$. It is clear by definition that
$$y\in \min(I)\leftrightarrow(y\in I)\wedge(\forall \lch x,\ldots,w\rch\in CH(y))(x,\ldots,w\notin I).$$

Therefore we obtain that $1\in \min(\mathbb{N})$ because $1\in \mathbb{N}$ and for all $\lch x,\ldots,w\rch\in CH(1)$ we have that $x,\ldots,w\notin I_{\alpha}$ for all $I_{\alpha}$. So $x,\ldots,w\notin\mathbb{N}$. Hence $\{1\}\subseteq \min(\mathbb{N})$.

Suppose that $\min(\mathbb{N})\not\subseteq\{1\}$. Then there exists $y\in \min(\mathbb{N})$ such that $y\neq 1$. Since $y\in \min(\mathbb{N})$ then for all $\lch x,\ldots,w\rch\in CH(y)$ we have that $x,\ldots,w\notin\mathbb{N}$. Therefore, we obtain $y\in\mathbb{N}$ such that $y\neq 1$ and for all $\lch x,\ldots,w\rch\in CH(y)$ we have that $x,\ldots,w\notin\mathbb{N}$, which is a contradiction because $1\in y$ and $1\in\mathbb{N}$. Therefore $\min(\mathbb{N})=\{1\}$.
\end{itemize}

The second part of the Theorem \ref{theo natural numbers are inductive} (a) follows immediately from the definition of $\mathbb{N}$.
\end{proof}

We introduce the Principle of Induction for the study of the different types of natural numbers $\mathbb{N}$, $\mathbb{N}^{\star}$
and $\mathbb{N}_{\Theta}$.

\begin{theorem}\label{theo 12 induction principle}{\textbf{The Principle of Induction.}}
Let $\Phi(x)$ be a normal formula.
\begin{enumerate}
  \item $[\Phi(1)\wedge (\forall n\in \mathbb{N})(\Phi(n)\rightarrow \Phi(n\cup \{n\}))]\rightarrow (\forall n\in \mathbb{N})(\Phi(n))$.

  \item $[\Phi(1^{\st})\wedge (\forall n\in \mathbb{N}^{\star})(\Phi(n)\rightarrow \Phi(n\cup \{n\}))]\rightarrow (\forall n\in \mathbb{N}^{\star})(\Phi(n))$.

  \item $[\Phi(1_{\Theta})\wedge (\forall n\in \mathbb{N}_{\Theta})(\Phi(n)\rightarrow \Phi(n\cup \{n\}))]\rightarrow (\forall n\in \mathbb{N}_{\Theta})(\Phi(n))$.

\end{enumerate}
\end{theorem}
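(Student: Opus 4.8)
The plan is to prove all three parts by the same ``smallest inductive $\sigma$-set'' argument, reducing the induction principle to the minimality of $\mathbb{N}$ (respectively $\mathbb{N}^{\star}$, $\mathbb{N}_{\Theta}$) established in Theorem \ref{theo natural numbers are inductive}. I would carry out part (1) in detail, since parts (2) and (3) follow verbatim after replacing $1$, ``$\alpha$-inductive'' and $\mathbb{N}$ by $1^{\st}$, ``$\beta$-inductive'', $\mathbb{N}^{\star}$ and by $1_{\Theta}$, ``$\Theta$-inductive'', $\mathbb{N}_{\Theta}$, and invoking parts (b) and (c) of that theorem in place of (a).

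First I would assume the hypothesis of (1), namely $\Phi(1)$ together with $(\forall n\in\mathbb{N})(\Phi(n)\rightarrow\Phi(n\cup\{n\}))$, and form the $\sigma$-set
$$A=\{n\in\mathbb{N}:\Phi(n)\}.$$
Because $\Phi$ is a normal formula, the Schema of Separation (Theorem \ref{theo 3 schema of separation}) guarantees that $A$ is a genuine $\sigma$-set, and by construction $A\subseteq\mathbb{N}$. The goal is then to show $A=\mathbb{N}$, so the whole proof amounts to verifying that $A$ is $\alpha$-inductive in the sense of Definition \ref{def inductive sets}: once this is done, Theorem \ref{theo natural numbers are inductive}(a) forces $\mathbb{N}\subseteq A$, and combined with $A\subseteq\mathbb{N}$ this yields $A=\mathbb{N}$, i.e. $(\forall n\in\mathbb{N})(\Phi(n))$.

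Next I would check the three defining conditions of an $\alpha$-inductive $\sigma$-set. The successor condition is immediate: if $x\in A$ then $x\in\mathbb{N}$ and $\Phi(x)$, so the induction hypothesis gives $\Phi(x\cup\{x\})$, which is $\Phi(S(x))$ by Definition \ref{def successor S(X)}, and $S(x)\in\mathbb{N}$ because $\mathbb{N}$ is $\alpha$-inductive; hence $S(x)\in A$. The clause ``if $x\in A$ and $x\neq 1$ then $1\in x$'' is inherited directly from the same property of $\mathbb{N}$, since $A\subseteq\mathbb{N}$. For the minimality condition $\min(A)=\{1\}$ I would argue as follows: first $1\in A$, because $\Phi(1)$ holds and $1\in\mathbb{N}$; moreover $\min(\mathbb{N})=\{1\}$ means that every link of every $\epsilon$-chain of $1$ lies outside $\mathbb{N}$, hence outside $A$, so $1\in\min(A)$. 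Conversely, if $y\in\min(A)$ and $y\neq 1$, then $y\in\mathbb{N}$ with $y\neq 1$ gives $1\in y$, exhibiting the $\epsilon$-chain $\lch 1\rch\in CH(y)$ whose link $1$ lies in $A$, contradicting $y\in\min(A)$; thus $\min(A)=\{1\}$.

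The step I expect to require the most care is precisely the minimality condition $\min(A)=\{1\}$, because $\min$ in $\sigma$-Set Theory is defined through $\epsilon$-chains rather than by the usual membership order and is not obviously monotone under inclusion. The resolution is to avoid any appeal to monotonicity and instead transport the already proved identity $\min(\mathbb{N})=\{1\}$ from Theorem \ref{theo natural numbers are inductive}: the outer inclusion $A\subseteq\mathbb{N}$ makes ``outside $\mathbb{N}$'' imply ``outside $A$'', giving $1\in\min(A)$, while the $\alpha$-inductive clause $1\in y$ for $y\neq 1$ rules out every other minimal candidate. With this in place, parts (2) and (3) are obtained by the identical construction, using $A=\{n\in\mathbb{N}^{\star}:\Phi(n)\}$ and $A=\{n\in\mathbb{N}_{\Theta}:\Phi(n)\}$ and the corresponding base points $1^{\st}$ and $1_{\Theta}$.
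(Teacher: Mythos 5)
Your proposal is correct and follows essentially the same route as the paper: form $A=\{n\in\mathbb{N}:\Phi(n)\}$ via the Schema of Separation, verify that $A$ is $\alpha$-inductive (including the $\min(A)=\{1\}$ check via $\epsilon$-chains and the clause $1\in y$ for $y\neq 1$), and conclude $A=\mathbb{N}$ from the minimality statement in Theorem \ref{theo natural numbers are inductive}. Your treatment of the $\min(A)=\{1\}$ step is in fact slightly more explicit than the paper's, which leaves the final contradiction ($1\in y$ with $1\in A$ versus $y\in\min(A)$) implicit.
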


\begin{proof}
We will only prove (a) because the proofs of (b) and (c) are similar. Let $\Phi(x)$ a normal formula and $A=\{n\in \mathbb{N}: \Phi(n)\}$. By Theorem \ref{theo 3 schema of separation} it is clear that $A$ is a $\sigma$-set. We prove that $A$ is $\alpha$-inductive.

If $n\in A$ then $n\in\mathbb{N}$. Thus if $n\neq 1$ then $1\in n$.

If $n\in A$ then $\Phi(n)$ holds. Hence $S(n)\in\mathbb{N}$ and $\Phi(S(n))$ holds, which implies that $S(n)\in A$.

It is clear that $A\subseteq\mathbb{N}$ and $1\in A$. Let $\lch x,\ldots,w \rch\inch CH(1)$, then $x,\ldots,w\notin \mathbb{N}$ because $1\in\min(\mathbb{N})$. Therefore $x,\ldots,w\notin A$, so $1\in\min(A)$. Suppose that $\min(A)\not\subseteq\{1\}$. Then there exists $y\in \min(A)$ such that $y\neq 1$. Then we obtain a $\sigma$-element $y\in A$ such that $y\neq 1$. Hence $1\in y$, which is a contradiction. Thus $\min(A)=\{1\}$, thus $A$ is $\alpha$-inductive. Finally $\mathbb{N}= A$.
\end{proof}

\begin{lemma}\label{lemma 12 min(n) equal to min(n+1)}
The following statements holds:
\begin{description}
  \item[(a)] For all $n\in \mathbb{N}$, $\min(n)=1$,
  \item[(b)] For all $n\in \mathbb{N}^{\star}$, $\min(n)=1^{\st}$,
  \item[(c)] For all $n\in \mathbb{N}_{\Theta}$, $\min(n)=1_{\Theta}$.
\end{description}
\end{lemma}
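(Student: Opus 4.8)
The plan is to prove each of the three statements by the corresponding instance of the Principle of Induction (Theorem \ref{theo 12 induction principle}); I would carry out (a) in full and indicate the routine changes for (b) and (c). For (a) I set $\Phi(n):=(\min(n)=1)$, which is a normal formula since $\min(n)$ is a $\sigma$-set for each $\sigma$-set $n$.

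For the base case I would observe that $1=\{\alpha\}$ is a singleton, hence $1\in SG$, and then apply Lemma \ref{lemma 6 singleton min}(b) to conclude immediately that $\min(1)=1$, giving $\Phi(1)$. If one prefers an argument from first principles, the only candidate element of $\min(1)$ is $\alpha$, and $\alpha$ is $\epsilon$-minimal in $\{\alpha\}$ precisely because no link of any $\epsilon$-chain of $\alpha$ can equal $\alpha$, which is exactly Axiom \ref{axiom of w-regularity}; so $\min(1)=\{\alpha\}=1$ either way.

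For the inductive step, assume $\Phi(n)$, i.e. $\min(n)=1$. Then $\min(n)=1\neq\emptyset$, so $n$ is lower $\epsilon$-bounded by Definition \ref{def e-bounded set}, and $n\neq\emptyset$ because $\min(n)\subseteq n$. Hence Lemma \ref{lemma 12 min(x) equal to min(s(x))}(b) applies and yields $\min(S(n))=\min(n)=1$. Since $S(n)=n\cup\{n\}$ by Definition \ref{def successor S(X)}, this is precisely $\Phi(n\cup\{n\})$. With $\Phi(1)$ and $(\forall n\in\mathbb{N})(\Phi(n)\rightarrow\Phi(n\cup\{n\}))$ established, part (1) of Theorem \ref{theo 12 induction principle} gives $(\forall n\in\mathbb{N})(\min(n)=1)$.

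Statements (b) and (c) follow the same pattern, replacing $1$ by $1^{\st}=\{\beta\}$ and $1_{\Theta}=\{\emptyset\}$ respectively: both are singletons, so the base cases $\min(1^{\st})=1^{\st}$ and $\min(1_{\Theta})=1_{\Theta}$ come again from Lemma \ref{lemma 6 singleton min}(b), and the inductive steps are identical appeals to Lemma \ref{lemma 12 min(x) equal to min(s(x))}(b), closed off with parts (2) and (3) of Theorem \ref{theo 12 induction principle}. The only thing to be checked along the way — and the closest this argument comes to an obstacle — is that the hypotheses of Lemma \ref{lemma 12 min(x) equal to min(s(x))}(b), namely nonemptiness and lower $\epsilon$-boundedness, genuinely hold at each step; but both are immediate consequences of the inductive hypothesis $\min(n)\neq\emptyset$, so the induction goes through with no essential difficulty.
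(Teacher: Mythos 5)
Your proof is correct and follows essentially the same route as the paper: base case $\min(1)=1$, inductive step via Lemma \ref{lemma 12 min(x) equal to min(s(x))}, and closure by the Principle of Induction (Theorem \ref{theo 12 induction principle}). You simply make explicit what the paper leaves implicit, namely the justification of the base case through Lemma \ref{lemma 6 singleton min} and the verification that $n$ is nonempty and lower $\epsilon$-bounded before invoking Lemma \ref{lemma 12 min(x) equal to min(s(x))}(b).
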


\begin{proof}
We will only prove (a) because the proofs of (b) and (c) are similar. It is clear that $\min(1)=1$. Suppose that $\min(n)=1$, then, by Lemma \ref{lemma 12 min(x) equal to min(s(x))}, we have that $\min(S(n))=1$. Finally, by Theorem \ref{theo 12 induction principle}, we obtain that $\min(n)=1$ for all $n\in \mathbb{N}$.
\end{proof}

\begin{theorem}\label{theo 12 n exists a unique m star antiel}
For all $n\in \mathbb{N}$ there exists a unique $\sigma$-set $m$ such that $\{n\}\cup\{m\}=\emptyset$.
\end{theorem}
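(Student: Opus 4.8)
The plan is to separate the statement into its uniqueness and existence parts. Uniqueness is immediate: any $\sigma$-set $m$ with $\{n\}\cup\{m\}=\emptyset$ is by Definition \ref{def 8 antielement} a $\sigma$-antielement of $n$, and Theorem \ref{theo 8 antielement is unique} guarantees that the $\sigma$-antielement, when it exists, is unique. Thus the whole content lies in the existence claim, which I would prove by induction via the Principle of Induction (Theorem \ref{theo 12 induction principle}) applied to the normal formula $\Phi(n):=(\exists m)(\{n\}\cup\{m\}=\emptyset)$.

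For the base case $\Phi(1)$, Theorem \ref{theo 8 antielement of 1 is unique} gives $\{1\}\cup\{1^{\st}\}=\emptyset$, so $m=1^{\st}$ witnesses $\Phi(1)$. For the inductive step I assume $\Phi(n)$ and fix $m$ with $\{n\}\cup\{m\}=\emptyset$; the claim is that $S(m)=m\cup\{m\}$ witnesses $\Phi(S(n))$, that is, $\{S(n)\}\cup\{S(m)\}=\emptyset$. To establish this I would verify the four conditions (a)--(d) of Axiom \ref{axiom of completeness b} (Completeness B) for the pair $S(n),S(m)$, extracting the needed facts about $n$ and $m$ from the hypothesis $\{n\}\cup\{m\}=\emptyset$, which is itself an instance of Axiom \ref{axiom of completeness b}.

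Conditions (a) and (b) are the routine part. For (a), since $n\in\mathbb{N}$ we have $\min(n)=1$ by Lemma \ref{lemma 12 min(n) equal to min(n+1)}; because $1\neq 1^{\st}$, condition (a) of the hypothesis forces $\min(m)=1^{\st}$, and then Lemma \ref{lemma 12 min(x) equal to min(s(x))} gives $\min(S(n))=\min(n)=1$ and $\min(S(m))=\min(m)=1^{\st}$, so $\min(S(n),S(m))=|1\wedge 1^{\st}|$. For (b), the hypothesis yields $n\totdif m$, whence $S(n)\totdif S(m)$ by Lemma \ref{lemma 12 x totdif y then s(x) totdif s(y)}.

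The substantive part is conditions (c) and (d), and this is where I expect the main obstacle. By Corollary \ref{coro 12 successor is non empty}, any $z\in S(n)$ with $z\notin\min(S(n))$ is either an element of $n$ or equal to $n$. If $z\in n$, then condition (c) of the hypothesis already supplies the unique $\sigma$-antielement of $z$ inside $m\subseteq S(m)$, so $\Psi(z,w,a,S(m))$ holds. The genuinely new case is $z=n$, the greatest link added by the successor: here I must show that $n$ has a unique $\sigma$-antielement lying in $S(m)$, and this is exactly $m$, which is unique by Theorem \ref{theo 8 antielement is unique} and satisfies $m\in S(m)$ since $S(m)=m\cup\{m\}$. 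Condition (d) is handled symmetrically, the new greatest link $z=m$ having $\sigma$-antielement $n\in S(n)$, using the commutativity of fusion of pairs from Theorem \ref{theo 8 oper pairs is conmutative}. Once (a)--(d) are verified, Axiom \ref{axiom of completeness b} yields $\{S(n)\}\cup\{S(m)\}=\emptyset$, completing the inductive step; the Principle of Induction then gives $\Phi(n)$ for all $n\in\mathbb{N}$, and together with the uniqueness remark this proves the theorem.
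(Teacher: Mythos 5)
Your proposal is correct and follows essentially the same route as the paper: induction with $S(m)$ as the witness for $S(n)$, conditions (a) and (b) of Axiom \ref{axiom of completeness b} via Lemmas \ref{lemma 12 min(x) equal to min(s(x))} and \ref{lemma 12 x totdif y then s(x) totdif s(y)}, the case split $z\in n$ versus $z=n$ from Corollary \ref{coro 12 successor is non empty} for conditions (c) and (d), and uniqueness delegated to Theorem \ref{theo 8 antielement is unique}. Your explicit derivation of $\min(m)=1^{\st}$ from condition (a) of the inductive hypothesis is in fact slightly more careful than the paper's citation of Lemma \ref{lemma 12 min(n) equal to min(n+1)} at that point, but the argument is the same.
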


\begin{proof}
If $n=1$ then, by Theorem \ref{theo 8 antielement of 1 is unique}, there exists a unique $\sigma$-set $1^{\st}$ such that $\{1\}\cup\{1^{\st}\}=\emptyset$. Suppose that given $n\in\mathbb{N}$ there exists a unique $\sigma$-set $m$ such that $\{n\}\cup\{m\}=\emptyset$. We will prove that for $S(n)$ there exists a unique $\sigma$-set $\widehat{m}$ such that $\{S(n)\}\cup\{\widehat{m}\}=\emptyset$.

\textbf{Existence}: Consider $\widehat{m}= S(m)$. By Lemmas \ref{lemma 12 x totdif y then s(x) totdif s(y)} and \ref{lemma 12 min(n) equal to min(n+1)}, we have that conditions (a) and (b) of Axiom \ref{axiom of completeness b} (Completeness B) are satisfied by $S(n)$ and $S(m)$. We only need to prove that conditions (c) and (d) of Axiom \ref{axiom of completeness b} are satisfied by them. Nevertheless, we only prove condition (c) because the proof of condition (d) is analogous.

Let $z\in S(n)$ such that $z\notin \min(S(n))=\min(n)=1$. By Corollary \ref{coro 12 successor is non empty}, we have that $z\in n$ or $z=n$.
\begin{description}
  \item[(case 1)] Suppose that $z\in n$.

\begin{description}
  \item[(a.1)] Since $z\in n$ and $z\notin \min(S(n))=\min(n)$, then we have that there exists a unique $w$ such that $\{z\}\cup\{w\}=\emptyset$, because $n$ and $m$ satisfy the condition (c) of Axiom \ref{axiom of completeness b}. Also, we observe that $w\in m$ for the same condition.

  \item[(b.1)] Consider $a$ such that $\{z\}\cup\{a\}=\emptyset$. By (a.1) we obtain that $a=w$. Therefore $a\in m$ and by Corollary \ref{coro 12 successor is non empty} we have that $a\in S(m)$.
\end{description}

  \item[(case 2)]Suppose that $z=n$.

\begin{description}
  \item[(a.2)] Since $z=n$ then it is clear, by the inductive hypothesis, that there exists a unique $m$ such that $\{n\}\cup\{m\}=\emptyset$.

  \item[(b.2)] Consider $a$ such that $\{n\}\cup\{a\}=\emptyset$. By (a.2) we obtain that $a=m$. Finally by Corollary \ref{coro 12 successor is non
  empty} we have that $a\in S(m)$.
\end{description}
\end{description}
Therefore we have that
$$\{S(n)\}\cup\{S(m)\}=\emptyset.$$

\textbf{Uniqueness}: This fact is obvious by Theorem \ref{theo 8 antielement is unique}.
\end{proof}

\begin{theorem}\label{theo 12 n star exists unique m antielem}
For all $n\in \mathbb{N}^{\star}$ there exists a unique $\sigma$-set $m$ such that $\{n\}\cup\{m\}=\emptyset$.
\end{theorem}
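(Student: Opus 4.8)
The plan is to mirror exactly the induction carried out in the proof of Theorem \ref{theo 12 n exists a unique m star antiel}, now running the Principle of Induction over $\mathbb{N}^{\star}$ (part (2) of Theorem \ref{theo 12 induction principle}) instead of over $\mathbb{N}$. The only structural change is that the antielement of an anti-natural number will turn out to be an ordinary natural number, so the roles of $1$ and $1^{\st}$, and correspondingly the $(a)$- and $(b)$-parts of Lemmas \ref{lemma 12 min(n) equal to min(n+1)} and \ref{lemma 12 x totdif y then s(x) totdif s(y)}, are interchanged relative to that proof.

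First I would establish the base case $n=1^{\st}$. By Theorem \ref{theo 8 antielement of 1 is unique} we have $\{1\}\cup\{1^{\st}\}=\emptyset$, and by the commutativity of the fusion of pairs (Theorem \ref{theo 8 oper pairs is conmutative}) this gives $\{1^{\st}\}\cup\{1\}=\emptyset$; hence $m=1$ witnesses the claim for $n=1^{\st}$, and moreover this witness lies in $\mathbb{N}$.

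For the inductive step, I would assume that $n\in\mathbb{N}^{\star}$ admits a unique $m$ with $\{n\}\cup\{m\}=\emptyset$ (with $m$ a natural number) and prove that $\widehat{m}=S(m)$ is the unique antielement of $S(n)$. To verify the four conditions of Axiom \ref{axiom of completeness b} (Completeness B) for the pair $S(n),S(m)$: condition $(a)$ follows from $\min(S(n))=\min(n)=1^{\st}$ and $\min(S(m))=\min(m)=1$, using Lemma \ref{lemma 12 min(x) equal to min(s(x))} together with Lemma \ref{lemma 12 min(n) equal to min(n+1)} (and the inductive fact that $m\in\mathbb{N}$, so $\min(m)=1$); since condition $(a)$ is symmetric in the two orderings $|1\wedge 1^{\st}|$ and $|1^{\st}\wedge 1|$, it is satisfied. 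Condition $(b)$, namely $S(n)\totdif S(m)$, follows from $n\totdif m$ — which holds by condition $(b)$ of Completeness B applied to the inductive hypothesis $\{n\}\cup\{m\}=\emptyset$ — via Lemma \ref{lemma 12 x totdif y then s(x) totdif s(y)}. For conditions $(c)$ and $(d)$ I would use Corollary \ref{coro 12 successor is non empty} to split any $z\in S(n)$ with $z\notin\min(S(n))$ into the cases $z\in n$ and $z=n$, exactly as in Theorem \ref{theo 12 n exists a unique m star antiel}: in the first case the inductive hypothesis supplies a unique $w$ with $\{z\}\cup\{w\}=\emptyset$ and $w\in m$, whence $w\in S(m)$; in the second case the inductive hypothesis itself furnishes $m$ with $\{n\}\cup\{m\}=\emptyset$ and $m\in S(m)$.

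Finally, uniqueness of $\widehat{m}$ is immediate from Theorem \ref{theo 8 antielement is unique}, so the Principle of Induction yields the statement for all $n\in\mathbb{N}^{\star}$. I do not anticipate any genuine obstacle: the argument is completely parallel to the natural-number case, and the only point demanding care is the bookkeeping — keeping track that the antielement of an anti-natural number lies in $\mathbb{N}$ (so that its $\min$ is $1$, not $1^{\st}$) and invoking the $(a)$/$(b)$ parts of the auxiliary lemmas in the order appropriate to $\mathbb{N}^{\star}$.
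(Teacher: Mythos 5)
Your proposal is correct and takes essentially the same route as the paper: the paper's own proof of this theorem consists of the single remark that it is ``analogous to the previous one,'' and what you have written is precisely that analogous argument, with the roles of $1$ and $1^{\st}$ interchanged and the base case handled via Theorem \ref{theo 8 antielement of 1 is unique} together with commutativity (Theorem \ref{theo 8 oper pairs is conmutative}). Your explicit bookkeeping that $\min(m)=1$ (which in fact already follows from condition (a) of Axiom \ref{axiom of completeness b} applied to $\{n\}\cup\{m\}=\emptyset$, even without strengthening the induction hypothesis to $m\in\mathbb{N}$) is exactly the point the paper leaves implicit.
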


\begin{proof}
This proof is analogous to the previous one.
\end{proof}

We include the following Corollary in order to calculate the $\sigma$-anti-elements of the $\sigma$-elements of $\mathbb{N}$ and
$\mathbb{N}^{\star}$.

\begin{corollary}\label{coro 12}
The following statements holds:
\begin{description}
  \item[(a)] For all $n\in \mathbb{N}$, $S(n^{\st})=S(n)^{\st}$.
  \item[(b)] For all $n\in \mathbb{N}^{\star}$, $S(n^{\st})=S(n)^{\st}$.
\end{description}
\end{corollary}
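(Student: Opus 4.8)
The plan is to read the identity off the existence half of Theorem \ref{theo 12 n exists a unique m star antiel} and then close with uniqueness of $\sigma$-antielements. First I would recall that for $n\in\mathbb{N}$ the $\sigma$-antielement $n^{\st}$ is, by Definition \ref{def 8 antielement}, precisely the unique $\sigma$-set $m$ with $\{n\}\cup\{m\}=\emptyset$; its existence and uniqueness are furnished by Theorems \ref{theo 12 n exists a unique m star antiel} and \ref{theo 8 antielement is unique}. Moreover, since $\mathbb{N}$ is $\alpha$-inductive (Theorem \ref{theo natural numbers are inductive}), we have $S(n)\in\mathbb{N}$, so $S(n)^{\st}$ is itself a well-defined $\sigma$-set, again by Theorem \ref{theo 12 n exists a unique m star antiel}.

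The key observation is that the inductive step in the existence argument of Theorem \ref{theo 12 n exists a unique m star antiel} proves more than bare existence: with $m=n^{\st}$ it verifies conditions (a)--(d) of Axiom \ref{axiom of completeness b} (Completeness B) for the pair $S(n)$, $S(m)$ and concludes explicitly that $\{S(n)\}\cup\{S(m)\}=\emptyset$. Rewriting this with $m=n^{\st}$ gives $\{S(n)\}\cup\{S(n^{\st})\}=\emptyset$, i.e. $S(n^{\st})$ is a $\sigma$-antielement of $S(n)$. Thus I would simply cite that existence computation rather than repeat the verification of Axiom \ref{axiom of completeness b}.

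Finally, by Definition \ref{def 8 antielement} the $\sigma$-set $S(n)^{\st}$ is \emph{the} $\sigma$-antielement of $S(n)$, and by uniqueness of $\sigma$-antielements (Theorem \ref{theo 8 antielement is unique}) any two $\sigma$-antielements of $S(n)$ coincide; hence $S(n^{\st})=S(n)^{\st}$, establishing (a). For (b) I would repeat the argument verbatim, replacing Theorem \ref{theo 12 n exists a unique m star antiel} by its antinatural counterpart Theorem \ref{theo 12 n star exists unique m antielem} and using that $\mathbb{N}^{\star}$ is $\beta$-inductive. Since the substantive work was already discharged in the proof of Theorem \ref{theo 12 n exists a unique m star antiel}, there is no genuine obstacle; the only point needing care is to confirm that the witness $m$ produced in that existence proof is literally $n^{\st}$ and not some other $\sigma$-set, which is immediate because it is constructed from the inductive hypothesis as the unique antielement of $n$.
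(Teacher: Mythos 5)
Your proposal is correct, but it is organized differently from the paper's own proof. The paper proves Corollary \ref{coro 12}(a) by a fresh induction on $n$: the base case invokes Theorem \ref{theo 8 2 antielemt of 2 star}, and the inductive step assumes $S(n^{\st})=S(n)^{\st}$ and then re-verifies conditions (a)--(d) of Axiom \ref{axiom of completeness b} for the pair $S(S(n))$, $S(S(n^{\st}))$, essentially replaying the case analysis from the existence half of Theorem \ref{theo 12 n exists a unique m star antiel} one level up. You instead avoid any new induction: you observe that the inductive step of that existence proof already establishes, for each $n$ whose unique antielement is $m=n^{\st}$, the identity $\{S(n)\}\cup\{S(n^{\st})\}=\emptyset$, and then you close with Definition \ref{def 8 antielement} and the uniqueness of antielements (Theorem \ref{theo 8 antielement is unique}) to get $S(n^{\st})=S(n)^{\st}$ directly. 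This is sound --- the witness in that proof is indeed the unique antielement of $n$, so it is literally $n^{\st}$, and the theorem itself guarantees the hypothesis of the inductive step holds for every $n\in\mathbb{N}$ --- and it buys brevity by not duplicating the Completeness~(B) verification. What it costs is that you are citing a computation buried inside another proof rather than a stated result; to make this fully rigorous one would want to promote that computation to an explicit lemma (``for all $n\in\mathbb{N}$, $\{S(n)\}\cup\{S(n^{\st})\}=\emptyset$''), whereas the paper's self-contained induction, though redundant, only ever appeals to stated theorems. Your treatment of part (b) by symmetry matches the paper's.
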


\begin{proof}
We will only prove (a) because the proof of (b) is analogous.

Consider $n=1$. Then by Theorem \ref{theo 8 2 antielemt of 2 star} we obtain that $S(1^{\st})=S(1)^{\st}=2^{\st}$. Let $n\in\mathbb{N}$ such that $S(n^{\st})=S(n)^{\st}$. We will prove that $S(S(n)^{\st})=S(S(n))^{\st}$. Since $S(n^{\st})=S(n)^{\st}$ then
$$S(S(n)^{\st})=S(S(n))^{\st}\leftrightarrow S(S(n^{\st}))=S(S(n))^{\st}.$$
Therefore by Theorem \ref{theo 12 n exists a unique m star antiel} it is only necessary to prove that
$$\{S(S(n))\}\cup \{ S(S(n^{\st}))\}=\emptyset.$$
Since $\{S(n)\}\cup \{S(n^{\st})\}=\emptyset$ then by Lemmas \ref{lemma 12 x totdif y then s(x) totdif s(y)} and \ref{lemma 12 min(n) equal to min(n+1)}, we obtain that $S(S(n))$ and $S(S(n^{\st}))$ satisfy the conditions (a) and (b) of Axiom \ref{axiom of completeness b} (Completeness B). We only need to prove that conditions (c) and (d) of Axiom \ref{axiom of completeness b} are satisfied by them. Nevertheless, we only prove
condition (c) because the proof of condition (d) is analogous.

Let $z\in S(S(n))$ such that $z\notin \min(S(S(n)))=\min(S(n))=1$. By Corollary \ref{coro 12 successor is non empty} we have that $z\in S(n)$ or $z=S(n)$.

\begin{description}
  \item[(case 1)] Suppose that $z\in S(n)$.

\begin{description}
  \item[(a.1)] Since $z\in S(n)$ and $z\notin \min(S(S(n)))=\min(S(n))$, then we have that there exists a unique $w$ such that $\{z\}\cup\{w\}=\emptyset$ because $S(n)$ and $S(n^{\st})$ satisfy the condition (c) of Axiom \ref{axiom of completeness b}. Also, we observe that $w\in S(n^{\st})$, for the same reason.

  \item[(b.1)] Consider $a$ such that $\{z\}\cup\{a\}=\emptyset$. By (a.1) we obtain that $a=w$. Therefore $a\in S(n^{\st})$ and by Corollary \ref{coro 12 successor is non empty} we have that $a\in S(S(n^{\st}))$.
\end{description}

  \item[(case 2)]Suppose that $z=S(n)$.

\begin{description}
  \item[(a.2)] Since $z=S(n)$ then it is clear, by inductive hypothesis, that there exists a
unique $S(n^{\st})$ such that
$\{S(n)\}\cup\{S(n^{\st})\}=\emptyset$.

  \item[(b.2)] Consider $a$ such that $\{S(n)\}\cup\{a\}=\emptyset$. By (a.2) we obtain that $a=S(n^{\st})$. Finally by Corollary \ref{coro 12 successor is non
  empty} we have that $a\in S(S(n^{\st}))$.
\end{description}
\end{description}
Therefore we have that
$$\{S(S(n))\}\cup\{S(S(n^{\st}))\}=\emptyset.$$
\end{proof}

\begin{theorem}\label{theo 12 n nat then n star antinat}
The following statements holds:
\begin{description}
  \item[(a)] For all $n\in \mathbb{N}$, $n^{\st}\in \mathbb{N}^{\star}.$
  \item[(b)] For all $n\in \mathbb{N}^{\star}$, $n^{\st}\in \mathbb{N}.$
\end{description}
\end{theorem}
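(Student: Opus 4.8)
The plan is to prove both parts by the Principle of Induction (Theorem \ref{theo 12 induction principle}), using as the engine the fact that the successor operation commutes with the formation of $\sigma$-antielements, namely Corollary \ref{coro 12}. Since $\mathbb{N}$ is $\alpha$-inductive and $\mathbb{N}^{\star}$ is $\beta$-inductive (Theorem \ref{theo natural numbers are inductive}), each family is closed under $S$, and this closure is precisely what will carry the inductive step across.

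For part (a), I would take $\Phi(n):=(n^{\st}\in\mathbb{N}^{\star})$, which is a normal formula so that Theorem \ref{theo 12 induction principle}(1) applies; here $n^{\st}$ is well defined for each $n\in\mathbb{N}$ by Theorem \ref{theo 12 n exists a unique m star antiel}. The base case $\Phi(1)$ holds because, by Theorem \ref{theo 8 antielement of 1 is unique}, $\{1\}\cup\{1^{\st}\}=\emptyset$, so the $\sigma$-antielement of $1$ is exactly the first antinatural number $1^{\st}\in\mathbb{N}^{\star}$. For the inductive step, assuming $n^{\st}\in\mathbb{N}^{\star}$, I would invoke Corollary \ref{coro 12}(a) to write $S(n)^{\st}=S(n^{\st})$; since $\mathbb{N}^{\star}$ is $\beta$-inductive and $n^{\st}\in\mathbb{N}^{\star}$, we obtain $S(n^{\st})\in\mathbb{N}^{\star}$, whence $S(n)^{\st}\in\mathbb{N}^{\star}$. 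The Principle of Induction then yields $\Phi(n)$ for all $n\in\mathbb{N}$.

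Part (b) is entirely symmetric: I would set $\Phi(n):=(n^{\st}\in\mathbb{N})$ and run Theorem \ref{theo 12 induction principle}(2) over $\mathbb{N}^{\star}$. The base case uses $(1^{\st})^{\st}=1\in\mathbb{N}$, which follows from Corollary \ref{lemma 11.1} together with Theorem \ref{theo 8 antielement of 1 is unique}. The inductive step is again Corollary \ref{coro 12}(b), giving $S(n)^{\st}=S(n^{\st})$, combined with the $\alpha$-inductiveness of $\mathbb{N}$ to conclude $S(n^{\st})\in\mathbb{N}$.

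The genuinely substantive content sits in the lemmas already established, above all Corollary \ref{coro 12}, whose proof encodes the delicate verification of conditions (c) and (d) of Axiom \ref{axiom of completeness b} (Completeness B). Given that corollary, the only point requiring care is the normality of the predicate $\Phi$, so that the induction scheme is applicable; everything else reduces to a routine two-line induction. I therefore do not anticipate a real obstacle beyond the bookkeeping of which inductive family ($\alpha$- or $\beta$-) each successor closure is being drawn from.
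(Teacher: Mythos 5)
Your proposal is correct and follows essentially the same route as the paper: induction on $n$ with base case $1^{\st}\in\mathbb{N}^{\star}$ and inductive step combining the inductiveness of $\mathbb{N}^{\star}$ (to get $S(n^{\st})\in\mathbb{N}^{\star}$) with Corollary \ref{coro 12} (to identify $S(n^{\st})=S(n)^{\st}$). You are merely more explicit than the paper about invoking the Principle of Induction with the normal formula $\Phi(n):=(n^{\st}\in\mathbb{N}^{\star})$ and about the base case of part (b).
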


\begin{proof}
We will only prove (a) because the proof of (b) is analogous. If $n=1$ then it is clear that $(1)^{\st}=1^{\st}\in \mathbb{N}^{\star}$. Let $n\in \mathbb{N}$ such that $n^{\st}\in \mathbb{N}^{\star}$. Since $\mathbb{N}$ and $\mathbb{N}^{\star}$ are inductive $\sigma$-sets then
$S(n)\in \mathbb{N}$ and $S(n^{\st})\in \mathbb{N}^{\star}$. Finally, by Corollary \ref{coro 12} we have that $S(n^{\st})=S(n)^{\st}\in \mathbb{N}^{\star}$.
\end{proof}

We will use the following notation:
\begin{itemize}
  \item $n,m,i,j,k,$ etc. to denote natural numbers.
  \item $n^{\st},m^{\st},i^{\st},j^{\st},k^{\st},$ etc. to denote anti-natural numbers.
  \item $n_{\Theta},m_{\Theta},i_{\Theta},j_{\Theta},k_{\Theta},$ etc. to denote $\Theta$-natural numbers.

  \item If $n\in \mathbb{N}$ then $S(n)=n+1\in \mathbb{N}$,
  $$\mathbb{N}=\{1,2,3,4,\ldots\},$$
  $1$, $S(1)=1+1=2$, $S(2)=2+1=3$, $S(3)=3+1=4$, etc.

  \item If $n^{\st}\in \mathbb{N}^{\star}$ then
  $S(n^{\st})=n^{\st}+1^{\st}\in\mathbb{N}^{\star}$,
  $$\mathbb{N}^{\star}=\{1^{\st},2^{\st},3^{\st},4^{\st},\ldots\},$$
  $1^{\st}$, $S(1^{\st})=1^{\st}+1^{\st}=2^{\st}$, $S(2^{\st})=2^{\st}+1^{\st}=3^{\st}$, $S(3^{\st})=3^{\st}+1^{\st}=4^{\st}$, etc.

  \item If $n_{\Theta}\in \mathbb{N}_{\Theta}$ then
  $S(n_{\Theta})=n_{\Theta}+1_{\Theta}\in\mathbb{N}_{\Theta}$,
  $$\mathbb{N}_{\Theta}=\{1_{\Theta},2_{\Theta},3_{\Theta},4_{\Theta},\ldots\},$$
  $1_{\Theta}$, $S(1_{\Theta})=1_{\Theta}+1_{\Theta}=2_{\Theta}$, $S(2_{\Theta})=2_{\Theta}+1_{\Theta}=3_{\Theta}$, $S(3_{\Theta})=3_{\Theta}+1_{\Theta}=4_{\Theta}$, etc.
\end{itemize}

Therefore, by Corollary \ref{coro 12}, we obtain that
$$\{1\}\cup\{1^{\st}\}=\{2\}\cup\{2^{\st}\}=\{3\}\cup\{3^{\st}\}= \{4\}\cup\{4^{\st}\}=\ldots=\emptyset.$$

\end{subsection}

\begin{subsection}{The Axiom of Generated $\sigma$-set.}\label{axiom of generated set}
For all $\sigma$-sets $X$ and $Y$ there exists a $\sigma$-set, called the $\sigma$-set generated by $X$ and $Y$, whose $\sigma$-elements are exactly the fusion of the $\sigma$-subsets of $X$ with the $\sigma$-subsets of $Y$, that is
$$(\forall X,Y)(\exists Z)(\forall a)(a \in Z \leftrightarrow(\exists A\in 2^{X})(\exists B\in 2^{Y})(a=A\cup B)).$$

We can define the Generated Space by $X$ and $Y$.

\begin{definition}
Let $X$ and $Y$ be $\sigma$-sets. The \textbf{Generated Space by $X$ and $Y$} is given by
$$\langle 2^{X},2^{Y}\rangle=\{A\cup B: A\in 2^{X} \wedge B\in 2^{Y}\}.$$
\end{definition}

\begin{figure}
\centering
\includegraphics[width=1\textwidth]{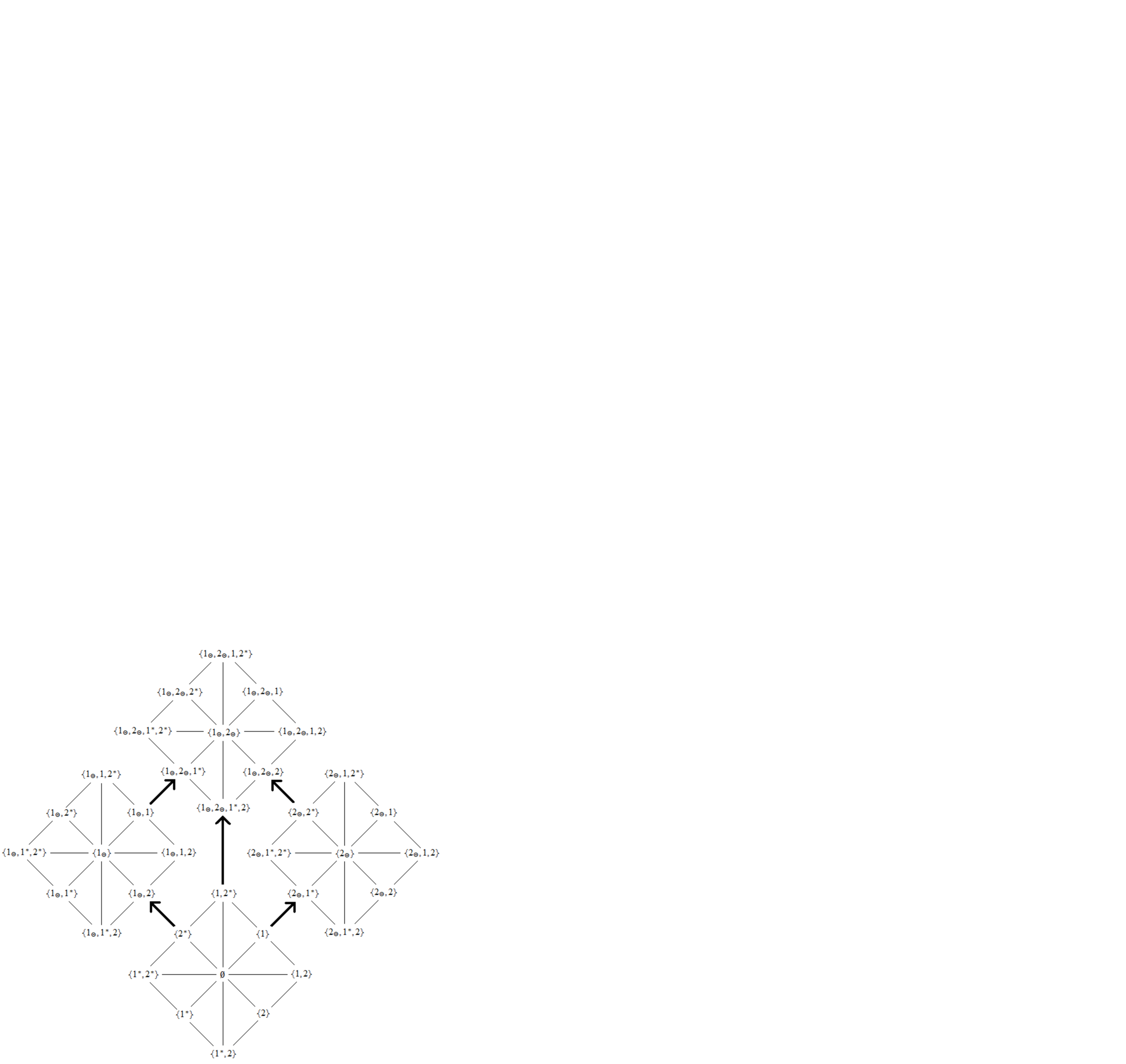}
\caption{Generated Space by $\{1_{\Theta},2_{\Theta},1,2\}$ and $\{1_{\Theta},2_{\Theta},1^{\st},2^{\st}\}$}
\label{generated space}
\end{figure}

We observe that in general $2^{A\cup B}\neq \langle 2^{A},2^{B}\rangle.$ Consider $X=\{1_{\Theta},2^{\st}\}$ and $Y=\{1_{\Theta},2\}$, then $X\cup Y=\{1_{\Theta}\}$. Therefore $2^{X\cup Y}=\{\emptyset, \{1_{\Theta}\}\}$ and $\langle 2^{X},2^{Y}\rangle=\{\emptyset, \{1_{\Theta}\}, \{2\}, \{2^{\st}\}, \{1_{\Theta},2\},\{1_{\Theta},2^{\st}\} \}.$

On the other hand, if we consider $A=\{1_{\Theta},2_{\Theta}\}$ and $B=\{1,2\}$ we obtain that the generated space by $A\cup B$
and $A\cup B^{\star}$ is the following:

\

$\langle 2^{A\cup B},2^{A\cup B^{\star}}\rangle=$ $\{\emptyset, \{1\},$ $\{1^{\st}\},$ $\{2\},$ $\{2^{\st}\},$ $\{1_{\Theta}\},$ $\{2_{\Theta}\},$ $\{1^{\st},2\},$ $\{1,2^{\st}\},$ $\{1,2\},$ $\{1^{\st},2^{\st}\},$ $\{1_{\Theta},1\},$ $\{1_{\Theta},1^{\st}\},$ $\{1_{\Theta},2\},$ $\{1_{\Theta},2^{\st}\},$ $\{2_{\Theta},1\},$ $\{2_{\Theta},1^{\st}\},$ $\{2_{\Theta},2\},$ $\{2_{\Theta},2^{\st}\},$ $\{1_{\Theta},2_{\Theta}\},$
$\{1_{\Theta},1^{\st},2\},$ $\{1_{\Theta},1,2^{\st}\},$ $\{1_{\Theta},1,2\},$ $\{1_{\Theta},1^{\st},2^{\st}\},$ $\{2_{\Theta},1^{\st},2\},$ $\{2_{\Theta},1,2^{\st}\},$ $\{2_{\Theta},1,2\},$ \ $\{2_{\Theta},1^{\st},2^{\st}\},$ \ $\{1_{\Theta},2_{\Theta},1\},$ $\{1_{\Theta},2_{\Theta},1^{\st}\},$ \ $\{1_{\Theta},2_{\Theta},2\},$ $\{1_{\Theta},2_{\Theta},2^{\st}\},$ $\{1_{\Theta},2_{\Theta},1^{\st},2\},$ $\{1_{\Theta},2_{\Theta},1,2^{\st}\},$ $\{1_{\Theta},2_{\Theta},1,2\},$ $\{1_{\Theta},2_{\Theta},1^{\st},2^{\st}\}\}.$

\

See Figure \ref{generated space}. 

\begin{definition}\label{def integer space}
Let $X$ and $Y$ be $\sigma$-sets. We say that $\langle 2^{X},2^{Y}\rangle$ is the Integer Space generated by $X$ if $Y$ is the $\sigma$-antiset of $X$ (i.e. $X^{\star}=Y$). In this case the Integer Space is denoted by
$$3^{X}=\langle 2^{X},2^{X^{\star}}\rangle.$$
\end{definition}

If $A=\{1,2,3\}$ and $A^{\star}=\{1^{\st},2^{\st},3^{\st}\}$, then

\

$\langle 2^{A},2^{A^{\star}}\rangle=3^{A}$ $=\{\emptyset,$ $\{1\},$ $\{2\},$ $\{3\},$ $\{1^{\st}\},$ $\{2^{\st}\},$ $\{3^{\st}\},$ $\{1,2\},$ $\{1,3\},$ $\{2,3\},$ $\{1^{\st},2\},$ $\{1^{\st},3\},$ $\{2^{\st},3\},$ $\{1^{\st},2^{\st}\},$ $\{1^{\st},3^{\st}\},$ $\{2^{\st},3^{\st}\},$ $\{1,2^{\st}\},$ $\{1,3^{\st}\},$ $\{2,3^{\st}\},$ $\{1,2,3\},$ \ $\{1^{\st},2,3\},$ \ $\{1,2^{\st},3\},$ \ $\{1,2,3^{\st}\},$ $\{1^{\st},2^{\st},3\},$ $\{1^{\st},2,3^{\st}\},$ $\{1,2^{\st},3^{\st}\},$ $\{1^{\st},2^{\st},3^{\st}\} \}.$

\

Now, in Figure \ref{integer space}, we present the patterns of containments of the $3^{X}$. We observe that the $\sigma$-elements $\{1^{\st},2,3^{\st}\}$ and $\{1,2^{\st},3\}$ can be represented in three dimensions as one of
the vertexes of the pyramids
$$\triangle_{1}:=\{\{1,3\},\{2^{\st},3\},\{1,2^{\st}\}, \{1,2^{\st},3\}\}$$
and
$$\triangle_{2}:= \{\{1^{\st},3^{\st}\},\{2,3^{\st}\},\{1^{\st},2\},\{1^{\st},2,3^{\st}\}\}.$$

\begin{figure}
\centering
\includegraphics[width=0.75\textwidth]{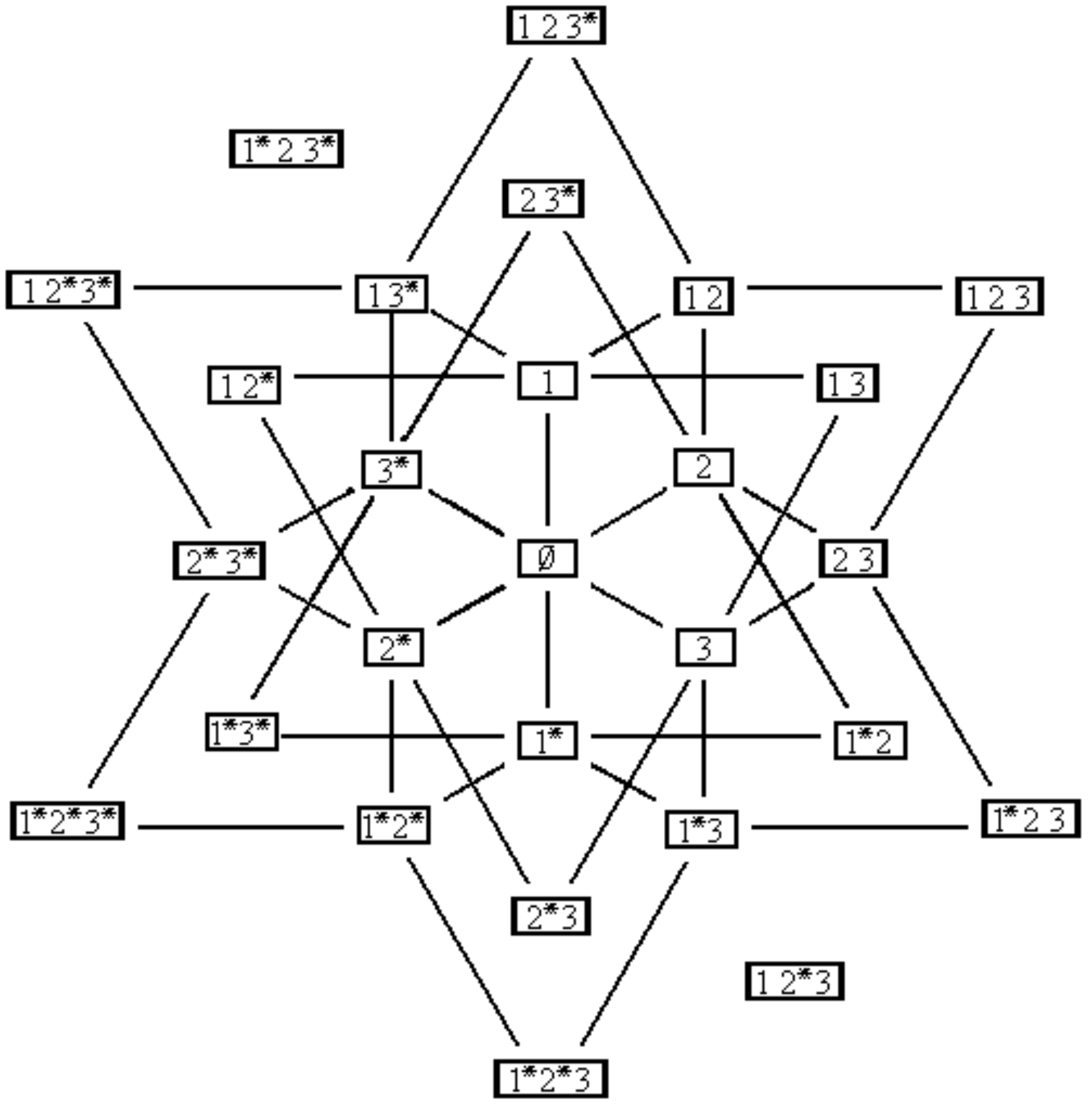}
\caption{Integer Space $3^{\{1,2,3\}}$}
\label{integer space}
\end{figure}

\end{subsection}
\end{section}

\begin{section}{Final Comments.}

The study of axiomatic system of $\sigma$-Set Theory leads, among other things, to the construction of $\mathbb{N}$, $\mathbb{N}^{\star}$ and $\mathbb{N}_{\Theta}$. It is important to notice some of the basic properties of these $\sigma$-sets.

By definition we have that:
\begin{itemize}
	\item $\mathbb{N}=\{1,2,3,4,5,\ldots\}$;
	\item $\mathbb{N}^{\star}=\{1^{\st},2^{\st},3^{\st},4^{\st},5^{\st},\ldots\}$;
	\item $\mathbb{N}_{\Theta}=\{1_{\Theta},2_{\Theta},3_{\Theta},4_{\Theta},5_{\Theta},\ldots\}$.
\end{itemize}
If we consider $n,m\in\mathbb{N}$ such that $n\neq m$, then we have that
\begin{itemize}
	\item $\{n\}\cup\{n^{\st}\}=\{n^{\st}\}\cup\{n\}=\emptyset$;
	\item $\{n\}\cup\{m^{\st}\}=\{m^{\st}\}\cup\{n\}=\{n,m^{\st}\}$;
	\item $\{n\}\cup\{n_{\Theta}\}=\{n_{\Theta}\}\cup\{n\}=\{n,n_{\Theta}\}$;
	\item $\{n^{\st}\}\cup\{n_{\Theta}\}=\{n_{\Theta}\}\cup\{n^{\st}\}=\{n^{\st},n_{\Theta}\}$;
	\item $\{n\}\cup\{m_{\Theta}\}=\{m_{\Theta}\}\cup\{n\}=\{n,m_{\Theta}\}$;
	\item $\{n^{\st}\}\cup\{m_{\Theta}\}=\{m_{\Theta}\}\cup\{n^{\st}\}=\{n^{\st},m_{\Theta}\}$.
\end{itemize}
Hence, a direct consequence of these properties is that
\begin{itemize}
	\item $\mathbb{N}\cup \mathbb{N}^{\star}=\emptyset$;
	\item $\mathbb{N}\cup \mathbb{N}_{\Theta}=\{1,1_{\Theta},2,2_{\Theta},3,3_{\Theta},4,4_{\Theta},5,5_{\Theta},\ldots\}$;
	\item $\mathbb{N}^{\star}\cup \mathbb{N}_{\Theta}=\{1^{\st},1_{\Theta},2^{\st},2_{\Theta},3^{\st},3_{\Theta},4^{\st},4_{\Theta},5^{\st},5_{\Theta},\ldots\}$.
\end{itemize}

In this paper, we give only the mathematical foundations of $\sigma$-Set Theory, thus, in the case that the axiomatic system will be consistent, in future work we will see the scope and limitations of the theory. However, we think that all definitions and theorems which are studied in a standard Set Theory, can be built on the $\sigma$-Set Theory. On the other hand, it is clear that the reciprocal is false because in a standard set theory, a set does not have the inverse element for the union operation.

Also,  we think that the study of the properties of the space generated by two $\sigma$-sets, as the study of power set in standard set theory, will lead to interesting results about the cardinality of a $\sigma$-set. Thus, we will be able to establish new relationships and properties of infinite cardinals. We present the following conjecture: If $X$ is a $\sigma$-set and $|X|$ is the cardinal of $X$ then we have that

\begin{conjecture}
For all $X\in 2^{\mathbb{N}_{\Theta}}$ and $Y\in 2^{\mathbb{N}}$, then
	$$|\langle 2^{X\cup Y},2^{X\cup Y^{\star}}\rangle|=2^{|X|}3^{|Y|}.$$
\end{conjecture}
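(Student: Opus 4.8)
The plan is to realize the generated space as the image of the fusion map $(a,b)\mapsto a\cup b$ and to count that image directly by an explicit bijection onto a product of independent ``slots''. The structural input is the following: since $X\subseteq\mathbb{N}_{\Theta}$, every $\sigma$-element of $X$ has $\min$ equal to a $\Theta$-natural number, hence (by condition (a) of Axiom \ref{axiom of completeness b}, which is necessary for annihilation) possesses no $\sigma$-antielement; whereas every $n\in Y\subseteq\mathbb{N}$ has the unique $\sigma$-antielement $n^{\st}$, which is an anti-natural number, by Theorems \ref{theo 12 n exists a unique m star antiel} and \ref{theo 12 n nat then n star antinat}. Consequently the three families $X$, $Y$ and $Y^{\star}=\{n^{\st}:n\in Y\}$ are pairwise disjoint, and the only $\capst$-pairings that can arise among all these $\sigma$-elements occur between an $n\in Y$ and its partner $n^{\st}\in Y^{\star}$.

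First I would compute the fusion of a typical pair. Take $a\in 2^{X\cup Y}$ and $b\in 2^{X\cup Y^{\star}}$. Both $X\cup Y$ and $X\cup Y^{\star}$ are $\sigma$-antielement free (their members are $\Theta$-naturals and (anti)naturals with no cross-annihilation, cf. Theorems \ref{theo 9 star intersection} and \ref{theo 10 set AF subset AF}), so $a$ and $b$ split as disjoint fusions $a=a_{X}\cup a_{Y}$ and $b=b_{X}\cup b_{Y^{\star}}$ with $a_{X},b_{X}\subseteq X$, $a_{Y}\subseteq Y$, $b_{Y^{\star}}\subseteq Y^{\star}$. Applying Definitions \ref{def 9 int and diff antiset} and \ref{def 11 fusion of sets}, the only contributions to $a\capst b$ and $b\capst a$ come from indices $n$ with $n\in a_{Y}$ and $n^{\st}\in b_{Y^{\star}}$. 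Hence
$$a\cup b=(a_{X}\cup b_{X})\cup\{n\in a_{Y}:n^{\st}\notin b_{Y^{\star}}\}\cup\{n^{\st}\in b_{Y^{\star}}:n\notin a_{Y}\},$$
so the $X$-part of $a\cup b$ is the full union $a_{X}\cup b_{X}$, while for each $n\in Y$ the $\sigma$-set $a\cup b$ contains exactly one of the three possibilities: $n$, or $n^{\st}$, or neither.

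The key step is then the count. I would assign to each $z\in\langle 2^{X\cup Y},2^{X\cup Y^{\star}}\rangle$ the datum $(z\cap X;\,s)$, where $z\cap X\subseteq X$ records the $\Theta$-natural part and $s:Y\to\{0,1,2\}$ records, for each $n\in Y$, whether $n\in z$, $n^{\st}\in z$, or neither. By the computation above this lands in $2^{X}\times\{0,1,2\}^{Y}$. Surjectivity follows by exhibiting witnesses (put $a_{X}$ equal to the desired subset of $X$ and $b_{X}=\emptyset$; for each slot put $n$ only in $a_{Y}$, or $n^{\st}$ only in $b_{Y^{\star}}$, or leave both out), and injectivity is immediate from Axiom \ref{axiom of extensionality} (Extensionality), since $z$ is determined by its $\sigma$-elements. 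This delivers $|\langle 2^{X\cup Y},2^{X\cup Y^{\star}}\rangle|=2^{|X|}\cdot 3^{|Y|}$.

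I expect the main obstacle to be conceptual rather than computational: the fusion map $(a,b)\mapsto a\cup b$ is highly non-injective, so one must count its \emph{image} rather than its domain, and the crux is verifying that the ``both present'' configuration (which annihilates via $\{n\}\cup\{n^{\st}\}=\emptyset$) collapses with ``both absent'' into a single outcome, yielding exactly three states per natural number instead of four. A secondary point is the meaning of the right-hand side when $X$ or $Y$ is infinite: there $3^{|Y|}=2^{|Y|}$ in cardinal arithmetic and the bijection still gives the stated value, so the genuinely new arithmetic content lives in the $\epsilon$-finite case, where $3^{|Y|}$ differs from $2^{|Y|}$.
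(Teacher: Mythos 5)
Your proposal cannot be compared against the paper's own proof for the simple reason that the paper does not contain one: the statement is presented explicitly as a \emph{conjecture}, supported only by two worked examples ($X=\{1_{\Theta}\}$, $Y=\{1,2\}$ giving $18$, and $X=\emptyset$, $Y=\{1,2\}$ giving $9$). Your argument therefore goes beyond the paper, and in outline it is the right one. The two load-bearing observations are correctly identified and correctly justified from the paper's machinery: first, that $\sigma$-elements of $X\subseteq\mathbb{N}_{\Theta}$ admit no $\sigma$-antielement because $\min(n_{\Theta})=1_{\Theta}$ (Lemma \ref{lemma 12 min(n) equal to min(n+1)}) violates condition (a) of Axiom \ref{axiom of completeness b}, so they pass through $\capst$ and $\difst$ untouched; second, that for each $n\in Y$ the four input configurations $(n\in a,\,n^{\st}\in b)$ collapse to exactly three outputs in $a\cup b$, since ``both present'' annihilates to ``neither'' under Definition \ref{def 11 fusion of sets}. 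That yields the factor $3^{|Y|}$, and the free union on the $X$-part yields $2^{|X|}$; your explicit section for surjectivity ($b_{X}=\emptyset$, one witness per slot) and extensionality for injectivity close the count. Two points deserve more care if this is to be written up as a theorem rather than a sketch: (i) your displayed formula for $a\cup b$ is an iterated fusion, and fusion is only associative on $\sigma$-antielement-free families (Theorem \ref{theo 11 fusion associative set AF}), so you should note that after cancellation the surviving pieces $a_{X}\cup b_{X}$, $\{n\in a_{Y}:n^{\st}\notin b\}$ and $\{n^{\st}\in b_{Y^{\star}}:n\notin a_{Y}\}$ form an AF family, which is what licenses reading the expression as an ordinary union; (ii) the decomposition $a=a_{X}\cup a_{Y}$ presupposes that $X\cup Y$ is the ordinary union of $X$ and $Y$, which again needs the (easy) verification that no $n\in Y$ has its $\sigma$-antielement in $X$. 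Neither point is a genuine obstruction, and your closing remark correctly isolates where the content lives: in the $\epsilon$-finite case, where $3^{|Y|}$ and $2^{|Y|}$ actually differ.
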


\

Hence, if $X=\emptyset$, then $|\langle 2^{X\cup Y},2^{X\cup Y^{\star}}\rangle|=|3^{Y}|=3^{|Y|}$. On the other hand, if $Y=\emptyset$, then
$|\langle 2^{X\cup Y},2^{X\cup Y^{\star}}\rangle|=|2^{X}|=2^{|X|}$, because $\emptyset=\emptyset^{\star}$.

\begin{example}
We consider $X=\{1_{\Theta}\}$ and $Y=\{1,2\}$, then we obtain that

\begin{center}
\begin{tabular}{rl}
$\langle 2^{X\cup Y},2^{X\cup Y^{\star}}\rangle$ & $=\{\emptyset, \{1_{\Theta}\}, \{1\}, \{2\}, \{1^{\st}\}, \{2^{\st}\}, \{1_{\Theta},1\},\{1_{\Theta},2\}$\\
& $ \ \ \ \ \{1_{\Theta},1^{\st}\},\{1_{\Theta},2^{\st}\},\{1^{\st},2\},\{1,2^{\st}\}, \{1,2\},\{1^{\st},2^{\st}\},$\\
& $ \ \ \ \ \{1_{\Theta},1^{\st},2\},\{1_{\Theta},1,2^{\st}\}, \{1_{\Theta},1,2\},\{1_{\Theta},1^{\st},2^{\st}\}\}. $\\
\end{tabular}
\end{center}

Thus, we have that $|X|=1$, $|Y|=2$ and $|\langle 2^{X\cup Y},2^{X\cup Y^{\star}}\rangle|=2^{1}\cdot 3^{2}=18$.
\end{example}

\begin{example}\label{example 4.2}
We consider $X=\emptyset$ and $Y=\{1,2\}$, then we obtain that

$$3^{Y}=\{\emptyset, \{1\}, \{2\}, \{1^{\st}\}, \{2^{\st}\}, \{1^{\st},2\},\{1,2^{\st}\}, \{1,2\},\{1^{\st},2^{\st}\}\}.$$

Hence, we have that $|X|=0$, $|Y|=2$ and $|\langle 2^{X\cup Y},2^{X\cup Y^{\star}}\rangle|=2^{0}\cdot 3^{2}=9$.
\end{example}

With respect to the algebraic structure of the Integer Space $3^{X}$ and generated space $\langle 2^{X},2^{Y}\rangle$, we think that these structures are related with structures called NAFIL (non-associative finite invertible loops). We present our conjecture. If we define
$$A\oplus B\leftrightarrow A\cup B,$$
then we obtain the following.

\begin{conjecture}\label{conjecture 4.2}
For all $X\in 2^{IN}$ we have that $(3^{X},\oplus)$ satisfies the following conditions:
\begin{enumerate}
	\item $(\forall A,B\in 3^{X})(A\oplus B\in 3^{X})$.
	\item $(\exists !\xi\in 3^{X})(\forall A\in 3^{X})(\xi\oplus A=A\oplus\xi=A)$.
	\item $(\forall A\in 3^{X})(\exists !A^{\star}\in 3^{X})(A\oplus A^{\star}=A^{\star}\oplus A=\xi)$.
	\item $(\forall A,B\in 3^{X})(A\oplus B=B\oplus A)$.
\end{enumerate}
\end{conjecture}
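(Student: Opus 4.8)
The plan is to verify the four conditions separately, exploiting a description of $3^{X}$ as a space of ``ternary'' $\sigma$-sets. First I would record the canonical form of an element of $3^{X}=\langle 2^{X},2^{X^{\star}}\rangle$ (Definition \ref{def integer space}). Since for $A\in 2^{X}$ and $B\in 2^{X^{\star}}$ the fusion $A\cup B$ annihilates exactly the conjugate pairs $\{n,n^{\st}\}$ (by Theorem \ref{theo 9 star intersection} and Definition \ref{def 9 int and diff antiset}), every element of $3^{X}$ is a $\sigma$-subset $C$ of $X\cup X^{\star}$ containing no conjugate pair; conversely every such $C$ equals $(C\cap X)\cup(C\cap X^{\star})\in\langle 2^{X},2^{X^{\star}}\rangle$. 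This yields a clean membership criterion for $3^{X}$ that I would use throughout.

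Condition (4) is then immediate: $A\oplus B=A\cup B=B\cup A=B\oplus A$ by the commutativity of fusion noted after Definition \ref{def 11 fusion of sets}. For condition (2) I would take $\xi=\emptyset$, which lies in $3^{X}$ since $\emptyset=\emptyset\cup\emptyset$, and invoke Theorem \ref{theo 11 fusion of x and empty} to obtain $\emptyset\oplus A=A\oplus\emptyset=A$ for every $A\in 3^{X}$; uniqueness follows from the one-line argument $\xi=\xi\oplus\xi'=\xi'$, which needs no associativity.

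The crux is condition (1), closure. Given $C,D\in 3^{X}$ in the canonical form above, Definitions \ref{def 11 fusion of sets} and \ref{def 9 int and diff antiset} give the membership rule that $x\in C\cup D$ holds if and only if ($x\in C$ and $x^{\st}\notin D$) or ($x\in D$ and $x^{\st}\notin C$). I would then prove $C\cup D$ again has no conjugate pair by a short case analysis: were both $n$ and $n^{\st}$ in $C\cup D$, tracing which of $C,D$ each one comes from forces either a conjugate pair already inside $C$, or one inside $D$, or a direct clash with the side conditions $x^{\st}\notin D$ and $x^{\st}\notin C$ --- absurd in every case. As $C\cup D\subseteq X\cup X^{\star}$ is automatic, the criterion delivers $C\cup D\in 3^{X}$.

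For condition (3) I would take $A^{\star}=\{x^{\st}:x\in A\}$, a $\sigma$-set by Replacement (Theorem \ref{theo 3 schema of separation}) since $x\mapsto x^{\st}$ is a normal functional by uniqueness of $\sigma$-antielements (Theorem \ref{theo 8 antielement is unique}), every element of $A$ having an antielement because $X\subseteq\mathbb{N}$ (Theorem \ref{theo 12 n exists a unique m star antiel}). Splitting $A=(A\cap X)\cup(A\cap X^{\star})$ shows $A^{\star}\in 3^{X}$, and the computation $A\capst A^{\star}=A$ gives $A\oplus A^{\star}=A\cup A^{\star}=\emptyset=\xi$, whence $A^{\star}\oplus A=\xi$ by commutativity. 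Here uniqueness is \emph{not} read off from group theory, since the operation is non-associative (as the authors stress after Example \ref{example 11 fusion of sets}), but directly from Theorem \ref{theo 11 antiset is unique}, which asserts global uniqueness of the $\sigma$-antiset. I expect the main obstacle to be exactly this interplay with non-associativity: one must ensure that the failure of associativity does not infect the \emph{pairwise} closure argument, and that the canonical-form reduction is valid for every element of $3^{X}$ and not merely for the generators. It is also worth flagging that the hypothesis $X\in 2^{\mathbb{N}}$ is essential, as it is what guarantees, via Theorem \ref{theo 12 n exists a unique m star antiel}, that $X^{\star}$ and all required antielements exist, so that $3^{X}$ is well-defined and condition (3) can be met at all.
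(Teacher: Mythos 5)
The paper does not actually prove this statement: it appears only as Conjecture \ref{conjecture 4.2}, and the author's sole supporting evidence is the verification on the single instance $X=\{1,2\}$ in Example \ref{example 4.2}. So there is no proof of record to compare yours against; what you have written is, as far as I can check, a viable proof from the results the paper already contains. Commutativity is built into the symmetric form of Definition \ref{def 11 fusion of sets}; the identity $\xi=\emptyset$ and its uniqueness follow from Theorem \ref{theo 11 fusion of x and empty} by your associativity-free argument $\xi=\xi\oplus\xi'=\xi'$; existence of $A^{\star}=\{x^{\st}:x\in A\}$ rests correctly on Theorems \ref{theo 12 n exists a unique m star antiel} and \ref{theo 12 n star exists unique m antielem} together with Replacement, and its uniqueness on Theorem \ref{theo 11 antiset is unique}; and your membership rule for $C\cup D$ is the right unpacking of Definitions \ref{def 11 fusion of sets} and \ref{def 9 int and diff antiset}.

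Two points need repair or sharpening. First, your canonical form is misstated: you cannot write ``$C\cup D\subseteq X\cup X^{\star}$'', because in this theory $X\cup X^{\star}=\emptyset$ by the very definition of the $\sigma$-antiset, so the only $\sigma$-subset of $X\cup X^{\star}$ is $\emptyset$. What you mean, and must say, is that every $\sigma$-element of $C$ is a $\sigma$-element of $X$ or of $X^{\star}$; with that phrasing your two-sided characterization of $3^{X}$ (split $C$ into $C\cap X$ and $C\cap X^{\star}$ by Separation and check that $\capst$ vanishes between the two halves) goes through. Second, the ``no conjugate pair'' half of your criterion is automatic rather than something to be re-derived by case analysis: Theorem \ref{theo 9 exclusion inverses} already forbids any $\sigma$-set from containing both $x$ and $x^{\st}$, so for closure it suffices to note that every $\sigma$-element of $C\cup D$ comes from $C$ or from $D$, hence from $X$ or from $X^{\star}$, and Exclusion supplies the rest. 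One caveat you inherit from the paper rather than create: the Axiom of Fusion only asserts one direction of the defining implication for $Z$, so the claim that the class in Definition \ref{def 11 fusion of sets} is genuinely a $\sigma$-set is itself unproved there; your argument is conditional on that, as is everything else in the paper about $\cup$.
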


It is clear, by Example \ref{example 4.2} that $3^{Y}$ satisfies the conditions (1),(2),(3) and (4) from Conjecture \ref{conjecture 4.2}.

Following with the possible applications of the $\sigma$-Set Theory and taking into account the developments of Lattices, we define the following relation: If we consider $A,B\in 3^{X}$ then we define
$$A\leq B\leftrightarrow B\oplus A^{\star}\in 2^{X}.$$
Thus, we obtain the following conjecture:

\begin{conjecture}\label{conjecture 4.3}
For all $X\in 2^{IN}$ we have that $(3^{X},\leq)$ satisfies the following conditions:
\begin{enumerate}
	\item $(\forall A\in 3^{X})(A\leq A)$.
	\item $(\forall A,B\in 3^{X})(A\leq B\wedge B\leq A \rightarrow A=B)$.
	\item $(\forall A,B\in 3^{X})(A\leq B\wedge B\leq C \rightarrow A\leq C)$.
\end{enumerate}
\end{conjecture}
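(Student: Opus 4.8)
The plan is to reduce the three order-axioms to a transparent combinatorial description of $\leq$ obtained from a canonical coordinatization of $3^{X}$. First I would attach to every $E\in 3^{X}$ the pair of $\sigma$-sets $E_{+}=E\cap X$ and $E_{-}=\{x\in X: x^{\st}\in E\}$. Since $X$ and $X^{\star}$ are totally different, no $E\in 3^{X}$ can contain both a $\sigma$-element $x$ and its $\sigma$-antielement $x^{\st}$ (the fusion in Definition \ref{def 11 fusion of sets} annihilates such a pair), so $E_{+}$ and $E_{-}$ are disjoint members of $2^{X}$ and $E=E_{+}\cup\{x^{\st}:x\in E_{-}\}$. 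By the Axiom \ref{axiom of extensionality} (Extensionality) the assignment $E\mapsto(E_{+},E_{-})$ is injective; this injectivity is exactly what antisymmetry will require. Using Lemma \ref{lemma 11.3} together with the involution $(x^{\st})^{\st}=x$ of Corollary \ref{lemma 11.1}, I would then record the formula for the $\sigma$-antiset, namely $(A^{\star})_{+}=A_{-}$ and $(A^{\star})_{-}=A_{+}$ for every $A\in 3^{X}$, which in particular re-confirms $A^{\star}\in 3^{X}$.

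The core of the argument is the characterization
$$A\leq B \ \Longleftrightarrow\ (A_{+}\subseteq B_{+})\wedge(B_{-}\subseteq A_{-}).$$
To prove it I would compute, from Definitions \ref{def 9 int and diff antiset} and \ref{def 11 fusion of sets}, exactly which $\sigma$-antielements survive in the fusion $B\cup A^{\star}$. Unwinding $\difst$ and $\capst$ one finds that a $\sigma$-antielement $x^{\st}$ belongs to $B\cup A^{\star}$ if and only if ($x\in B_{-}$ and $x\notin A_{-}$) or ($x\in A_{+}$ and $x\notin B_{+}$). Now $B\cup A^{\star}\in 2^{X}$ means precisely that no $\sigma$-antielement survives (every $\sigma$-element of a member of $3^{X}$ lies in $X\cup X^{\star}$, and the members of $X^{\star}$ are the forbidden ones); requiring the displayed disjunction to fail for every $x$ yields $B_{-}\subseteq A_{-}$ and $A_{+}\subseteq B_{+}$, and the converse reads off the same equivalence. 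I expect this fusion bookkeeping to be the main obstacle: one must keep the cancellation rule straight and invoke $X\totdif X^{\star}$ to guarantee that nothing outside $X\cup X^{\star}$ can appear in the result.

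With the characterization in hand the three clauses are immediate. Reflexivity holds because $A_{+}\subseteq A_{+}$ and $A_{-}\subseteq A_{-}$. For antisymmetry, $A\leq B$ and $B\leq A$ force $A_{+}=B_{+}$ and $A_{-}=B_{-}$, whence $A=B$ by the injectivity of $E\mapsto(E_{+},E_{-})$ established above. For transitivity (with the evidently intended third quantifier $\forall C$), $A\leq B$ and $B\leq C$ give $A_{+}\subseteq B_{+}\subseteq C_{+}$ and $C_{-}\subseteq B_{-}\subseteq A_{-}$, hence $A\leq C$. Therefore $(3^{X},\leq)$ satisfies the three stated conditions and is a partial order.
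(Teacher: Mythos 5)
The first thing to note is that the paper does not prove this statement at all: it appears only as Conjecture \ref{conjecture 4.3}, supported by the worked example $X=\{1,2,3\}$, so there is no proof of record to compare yours against. Your argument is new content and is essentially correct; at the level of rigor the paper operates at, it settles the conjecture for $X\in 2^{\mathbb{N}}$. The coordinatization $E\mapsto(E_{+},E_{-})$ is legitimate: every $\sigma$-element of a member of $3^{X}$ lies in $X$ or in $X^{\star}$, a member of $X^{\star}$ is $x^{\st}$ for a unique $x\in X$ by Lemma \ref{lemma 11.3} and Corollary \ref{lemma 11.1}, and no $E$ contains both $x$ and $x^{\st}$ --- though here the cleanest citation is Theorem \ref{theo 9 exclusion inverses} (Exclusion of Inverses) applied to $E$ itself, rather than your appeal to annihilation inside the fusion. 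Injectivity of the coordinatization then follows from Axiom \ref{axiom of extensionality} together with uniqueness of $\sigma$-antielements (Theorem \ref{theo 8 antielement is unique}). Your unwinding of $B\cup A^{\star}$ via Definitions \ref{def 9 int and diff antiset} and \ref{def 11 fusion of sets} gives exactly the right survival criterion for $x^{\st}$, and it reproduces the paper's own table for $X=\{1,2,3\}$ (for instance $\{2^{\st},3^{\st}\}$ and $\{1^{\st},3^{\st}\}$ come out incomparable, as the paper asserts). Two points you should make explicit rather than leave implicit: (i) the relation $A\leq B$ only makes sense if $A^{\star}$ exists, so before writing $(A^{\star})_{+}=A_{-}$ and $(A^{\star})_{-}=A_{+}$ you need that every $n\in X\subseteq\mathbb{N}$ has a $\sigma$-antielement, which is Theorem \ref{theo 12 n exists a unique m star antiel}; and (ii) the operation $\capst$ silently tests membership of $u^{\st}$, which is only meaningful because every $\sigma$-element involved actually has an antielement --- again supplied by the restriction to $\mathbb{N}$ and $\mathbb{N}^{\star}$. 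With those provisos the reduction of clauses (1)--(3) to reflexivity, antisymmetry and transitivity of componentwise inclusion is immediate, and your reading of the missing quantifier $\forall C$ in clause (3) is clearly the intended one.
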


Therefore, for all $X\in 2^{IN}$ we have that $(3^{X},\leq)$ is a\textbf{ partially ordered $\sigma$-set} and $2^{X}$ represent the \textbf{positive cone} of $3^{X}$. That is
$$2^{X}=\{x\in 3^{X}: x\geq\emptyset\}.$$

\begin{example}
We consider $X=\{1,2,3\}$ and the Integer Space $3^{X}$ (see example \ref{example 3.77}). If we consider the relation
$$A\leq B\leftrightarrow B\oplus A^{\star}\in 2^{X},$$
where $2^{X}=\{\emptyset,\{1\},\{2\},\{3\},\{1,2\},\{1,3\},\{2,3\},\{1,2,3\}\}$, then we have that
$$2^{X}=\{x\in 3^{X}: x\geq\emptyset\}.$$
Hence, $(3^{X},\leq)$ satisfies the conditions (1),(2) and (3) from Conjecture \ref{conjecture 4.3}. Therefore, we have that
$$\{1^{\st},2^{\st},3^{\st}\}\leq\{1^{\st},2^{\st}\}\leq\{1^{\st}\}\leq\emptyset\leq\{1\}\leq\{1,2\}\leq\{1,2,3\},$$
$$\{1^{\st},2^{\st},3^{\st}\}\leq\{1^{\st},3^{\st}\}\leq\{1^{\st}\}\leq\emptyset\leq\{1\}\leq\{1,3\}\leq\{1,2,3\},$$
$$\{1^{\st},2^{\st},3^{\st}\}\leq\{1^{\st},2^{\st}\}\leq\{2^{\st}\}\leq\emptyset\leq\{2\}\leq\{1,2\}\leq\{1,2,3\},$$
$$\{1^{\st},2^{\st},3^{\st}\}\leq\{2^{\st},3^{\st}\}\leq\{2^{\st}\}\leq\emptyset\leq\{2\}\leq\{2,3\}\leq\{1,2,3\},$$
$$\{1^{\st},2^{\st},3^{\st}\}\leq\{1^{\st},3^{\st}\}\leq\{3^{\st}\}\leq\emptyset\leq\{3\}\leq\{1,3\}\leq\{1,2,3\},$$
$$\{1^{\st},2^{\st},3^{\st}\}\leq\{2^{\st},3^{\st}\}\leq\{3^{\st}\}\leq\emptyset\leq\{3\}\leq\{2,3\}\leq\{1,2,3\}.$$
Also, we observe, for example, that $A=\{2^{\st},3^{\st}\}$ and $B=\{1^{\st},3^{\st}\}$ are not comparable, because $A\oplus B^{\star}=\{1,2^{\st}\}$ and $B\oplus A^{\star}=\{1^{\st},2\}$.
\end{example}

Therefore, if we follow the definition of basic operations + and - for real numbers, we can define $B\ominus A:= B\oplus A^{\star}$. So we obtain that
$$A\leq B \ \leftrightarrow \ \emptyset\leq B\oplus A^{\star} \ \leftrightarrow \ \emptyset\leq B\ominus A.$$

Finally, we present the summary of the axiomatic system of $\sigma$-Set Theory.

\begin{subsection}{The Axiom of Empty $\sigma$-set.}\label{3 axiom empty set}
There exists a $\sigma$-set which has no $\sigma$-elements, that is
$$(\exists X)(\forall x)(x\notin X).$$
\end{subsection}

\begin{subsection}{The Axiom of Extensionality.}\label{3 axiom of extensionality}
For all $\sigma$-classes $\hat{X}$ and $\hat{Y}$, if $\hat{X}$ and $\hat{Y}$ have the same
$\sigma$-elements, then $\hat{X}$ and $\hat{Y}$ are equal, that is
$$(\forall \hat{X},\hat{Y})[(\forall z)(z\in \hat{X} \leftrightarrow z\in \hat{Y})\rightarrow \hat{X}=\hat{Y}].$$
\end{subsection}

\begin{subsection}{The Axiom of Creation of $\sigma$-Class.}\label{3 axiom of creation of class}
We consider an atomic formula $\Phi(x)$ (where $\hat{Y}$ is not free). Then there exists the class $\hat{Y}$ of all $\sigma$-sets that satisfy $\Phi(x)$, that is
$$(\exists\hat{Y})(x\in \hat{Y} \leftrightarrow \Phi(x)),$$
with $\Phi(x)$ being an atomic formula where $\hat{Y}$ is not free.
\end{subsection}

\begin{subsection}{The Axiom of Scheme of Replacement.}\label{3 axiom of replacement}
The image of a $\sigma$-set under a normal functional formula $\Phi$ is a
$\sigma$-set.
\end{subsection}

\begin{subsection}{The Axiom of Pairs.}\label{3 axiom of pairs}
For all $X$ and $Y$ $\sigma$-sets there exists a $\sigma$-set $Z$, called fusion
of pairs of $X$ and $Y$, that satisfies one and only one of the
following conditions:
\begin{description}
  \item[(a)] $Z$ contains exactly $X$ and $Y$,
  \item[(b)] $Z$ is equal to the empty $\sigma$-set,
\end{description}
that is
$$(\forall X,Y)(\exists Z)(\forall a)[(a\in Z \leftrightarrow a=X \vee a=Y ) \underline{\vee}(a\notin Z)].$$
\end{subsection}

\begin{subsection}{The Axiom of Weak Regularity.}\label{3 axiom of w-regularity}
For all $\sigma$-set $X$, for all $\lch x,\ldots,w\rch\in CH(X)$ we have that $X\not\inch\lch x,\ldots,w\rch$, that
is
$$(\forall X)(\forall\lch x,\ldots,w\rch\in CH(X))(X\not\inch\lch x,\ldots,w\rch).$$
\end{subsection}

\begin{subsection}{The Axiom of non $\epsilon$-Bounded $\sigma$-Set.}\label{3 axiom non bounded set}
There exists a non $\epsilon$-bounded $\sigma$-set, that is
$$(\exists X)(\exists y)[(y\in X)\wedge(\min(X)=\emptyset\vee\max(X)=\emptyset)].$$
\end{subsection}

\begin{subsection}{The Axiom of Weak Choice.}\label{3 axiom of weak choice}
If $\hat{X}$ is a $\sigma$-class of $\sigma$-sets, then we can choose a singleton $Y$ whose unique $\sigma$-element come from $\hat{X}$, that is
$$(\forall\hat{X})(\forall x)(x\in\hat{X}\rightarrow (\exists Y)(Y=\{x\})).$$
\end{subsection}

\begin{subsection}{The Axiom of $\epsilon$-Linear $\sigma$-set.}\label{3 axiom e-linear set}
There exists a nonempty $\sigma$-set $X$ such that $X$ has the linear $\epsilon$-root property, that is
$$(\exists X)(\exists y)(y\in X\wedge X\in LR).$$
\end{subsection}

\begin{subsection}{The Axiom of totally different $\sigma$-sets.}\label{3 axiom one set}
For all $\epsilon$-linear singleton, there exists a $\epsilon$-linear singleton $Y$ such that $X$ is totally different from $Y$, that is
$$(\forall X\in SG\cap LR)(\exists Y\in SG\cap LR)(X\totdif Y).$$
\end{subsection}

\begin{subsection}{The Axiom of Completeness (A).}\label{3 axiom of completeness a}
If $X$ and $Y$ are $\sigma$-sets, then
$$\{X\}\cup\{Y\}=\{X,Y\},$$
if and only if $X$ and $Y$ satisfy one of the following conditions:
\begin{description}
  \item[(a)] $\min(X,Y)\neq |1\vee 1^{\st}|\wedge \min(X,Y)\neq |1^{\st}\vee 1|.$
  \item[(b)] $\neg(X\totdif Y).$
  \item[(c)] $(\exists z\in X)[z\notin \min(X)\wedge \neg \Psi(z,w,a,Y)].$
  \item[(d)] $(\exists z\in Y)[z\notin \min(Y)\wedge \neg \Psi(z,w,a,X)].$
\end{description}
\end{subsection}

\begin{subsection}{The Axiom of Completeness (B).}\label{3 axiom of completeness b}
If $X$ and $Y$ are $\sigma$-sets, then
$$\{X\}\cup\{Y\}=\emptyset,$$
if and only if $X$ and $Y$ satisfy the
following conditions:

\begin{description}
  \item[(a)] $\min(X,Y)=|1\wedge 1^{\st}|\vee \min(X,Y)=|1^{\st}\wedge 1|$;
  \item[(b)] $X\totdif Y;$
  \item[(c)] $(\forall z)(z\in X\wedge z\notin \min(X))\rightarrow \Psi(z,w,a,Y))$;
  \item[(d)] $(\forall z)(z\in Y\wedge z\notin \min(Y))\rightarrow \Psi(z,w,a,X))$.
\end{description}
\end{subsection}

\begin{subsection}{The Axiom of Exclusion.}\label{3 axiom of exclusion}
For all $\sigma$-sets $X,Y,Z$, if $Y$ and $Z$ are $\sigma$-elements of $X$ then the fusion of pairs of $Y$ and $Z$ contains exactly $Y$ and $Z$, that is
$$(\forall X,Y,Z)(Y,Z\in X\rightarrow \{Y\}\cup\{Z\}=\{Y,Z\}).$$
\end{subsection}

\begin{subsection}{The Axiom of Power $\sigma$-set.}\label{3 axiom of power set}
For all $\sigma$-set $X$  there exists a $\sigma$-set $Y$, called power of $X$, whose $\sigma$-elements are exactly the $\sigma$-subsets of $X$, that is
$$(\forall X)(\exists Y)(\forall z)(z\in Y\leftrightarrow z\subseteq X).$$
\end{subsection}

\begin{subsection}{The Axiom of Fusion.}\label{3 axiom of fusion}
For all $\sigma$-sets $X$ and $Y$, there exists a $\sigma$-set $Z$, called fusion of all $\sigma$-elements of $X$ and $Y$, such that $Z$ contains $\sigma$-elements of the $\sigma$-elements of $X$ or $Y$, that is
$$(\forall X,Y)(\exists Z)(\forall b)(b \in Z \rightarrow (\exists z)[(z \in X\vee z\in Y)\wedge(b\in z)]).$$
\end{subsection}

\begin{subsection}{The Axiom of Generated $\sigma$-set.}\label{3 axiom of generated set}
For all $\sigma$-sets $X$ and $Y$ there exists a $\sigma$-set, called the $\sigma$-set generated by $X$ and $Y$, whose $\sigma$-elements are exactly the fusion of the $\sigma$-subsets of $X$ with the $\sigma$-subsets of $Y$, that is
$$(\forall X,Y)(\exists Z)(\forall a)(a \in Z \leftrightarrow(\exists A\in 2^{X})(\exists B\in 2^{Y})(a=A\cup B)).$$
\end{subsection}
\end{section}

%%%%%%%%%%%%%%%%
% bibliography
%%%%%%%%%%%%%%%

% Set bibliography items using the "thebibliography" environment  and following
% the style used by the AMS journals.
%
% If the bibliography is generated by a bibtex database, use "amsplain" or
% "amsalpha" as bibliography style

\end{document}